\newcommand\N{\mathbb{N}}
\newcommand\R{\mathbb{R}}
\newcommand{\forget}[1]{}
\newcommand{\HH}{{\mathcal H}}
\renewcommand{\H}{\HH^1}
\def\Om{{\Omega}}  
\def\om2{{\Om\times\Om}}
\def\M{{\mathcal M}}
\def\supp{\mathrm{supp}\,}
\def\eps{\varepsilon}
\def\Lip{\mathrm{Lip}}
\newcommand{\F}{\mathcal{F}}
\newcommand{\res}{\llcorner} 
\newcommand{\MM}{\mathbb M}
\newcommand{\weakto}{\rightharpoonup}
\newcommand{\ld}{[\![}
\newcommand{\rd}{]\!]}
\newtheorem{theorem}{Theorem}[section]
\newtheorem{definition}[theorem]{Definition}
\newtheorem{lemma}[theorem]{Lemma}
\newtheorem{proposition}[theorem]{Proposition}
\newtheorem{corollary}[theorem]{Corollary}
\theoremstyle{remark}
\newtheorem{remark}[theorem]{Remark}
\newtheorem{example}[theorem]{Example}
\numberwithin{equation}{section}
\title{Decomposition of acyclic normal currents in a metric space}
\author{Emanuele Paolini}%
\address[Emanuele Paolini]{Dipartimento di Matematica ``U.~Dini'', Universit\`{a} di
Firenze, viale Morgagni 67/A, 50134 Firenze, Italy.}
\email{paolini@unifi.it}
\author{Eugene Stepanov}%
\address[Eugene Stepanov]{
St.Petersburg Branch
of the Steklov Mathematical Institute of the Russian Academy of Sciences,
Fontanka 27,
191023 St.Petersburg,
Russia
\and
Department of Mathematical Physics, Faculty of Mathematics and Mechanics,
St. Petersburg State University, Universitetskij pr.~28, Old Peterhof,
198504 St.Petersburg, Russia
}
\email{stepanov.eugene@gmail.com}
\thanks{The work of the second author was financed by GNAMPA, by RFBR grant \#11-01-00825,
  by the project 2008K7Z249 ``Trasporto ottimo di massa,
disuguaglianze geometriche e funzionali e applicazioni'' of the
Italian Ministry of Research,
as well as by the project ANR-07-BLAN-0235 OTARIE}
\begin{document}

\begin{abstract}
We prove that  every acyclic normal one-dimensional real Am\-bro\-sio-Kirchheim current
in a Polish (i.e.\ complete separable metric) space
 can be decomposed in curves, thus generalizing the analogous classical result proven by S.~Smirnov in Euclidean space setting.
 The same assertion is true for every complete metric space under a suitable set-theoretic assumption.
\end{abstract}

\maketitle

\section{Introduction}

The main result of the paper is Theorem~\ref{th_decompNorm1acycl} which says, very roughly speaking, that every acyclic normal one-dimensional real current in a complete metric space
 can be decomposed in curves. By currents here we mean
 Ambrosio-Kirchheim currents introduced in~\cite{AmbrKirch00} and generalizing the classical Whitney flat chains in Euclidean space from~\cite{Feder}. For the readers' convenience we recall some basic
 facts about Ambrosio-Kirchheim currents in Appendix~\ref{sec_curr0}.

Throughout the paper we silently assume, as it is now customary when dealing with metric currents, that
the density character (i.e.\ the minimum cardinality of a dense subset) of every metric space is an Ulam number.
This guarantees that every finite positive Borel measure is tight (even Radon when the space is complete),
is concentrated on some $\sigma$-compact subset and the support of this measure is separable (see, e.g., proposition~7.2.10
from~\cite{Bogachev06}). Besides being consistent with the Zermelo-Fraenkel set theory, this assumption is also
not restrictive because, as mentioned in~\cite{AmbrKirch00}, the whole theory of metric currents could have been
developed under the additional requirement that mass measures of the currents be tight. In fact, without this assumption
our result will be proven to hold for every complete metric space when $\mu_T$ and $\mu_{\partial T}$ are tight, and hence,
in particular, for Polish (i.e.\ complete separable metric) spaces.

 In the Euclidean space setting the analogous result on
 decomposition of acyclic normal currents in curves
 has been first proven by S.~Smirnov 
 (see theorem~C from~\cite{Smirnov94}) and further applied in many papers, especially dealing with optimal mass transportation problem.
 Our result thus generalizes the classical one from~\cite{Smirnov94} to generic metric spaces and hence opens the way to new treatment of optimal mass transportation problems in different metric structures. The technique
 of the proof we adopt is different from the one used in~\cite{Smirnov94} and hence is new also in Euclidean setting.

As an illustration of the results of the paper, in 
Appendix~\ref{sec_omt1} we study a formulation of the optimal mass transportation
problem in terms of metric currents and show that in most reasonable cases of geodesic metric spaces it is equivalent to the classical Monge-Kantorovich setting, while in general it is not, and, moreover, from an applicative point of view it is
more natural for mass transportation. Although this can be proven also by other means, we think that
the use of decomposition result for metric currents
is the most natural and easy way to get it, and, moreover, such a strategy is helpful also for
different kind of optimal transportation problems. 

\section{Notation and preliminaries}
For metric spaces $X$ and $Y$ we denote
by $\Lip(X,Y)$ (resp.\ $\Lip_b(X,Y)$) the set of all (resp.\ bounded) Lipschitz maps $f\colon X\to Y$ with arbitrary
  Lipschitz constant.
If $Y=\R$, we will omit the reference to $Y$ and write simply 
$\Lip(X)$ and $\Lip_b(X)$ respectively.
The Lipschitz constant of a map $f\colon X\to Y$ will be denoted by $\Lip(f)$.
The supremum norm of a map $f\colon X\to \R$ is denoted by $\|f\|_\infty$.

The metric spaces considered will further be tacitly assumed to be
complete, unless explicitly stated otherwise.

All the measures we will consider in the sequel are signed Borel measures
with finite total variation over some metric space $E$.
The narrow topology on measures is defined by duality with the space $C_b(E)$ of continuous bounded functions.
For a set $e\subset E$ we denote by $1_e\colon E\to \R$ its characteristic function.

We recall that a Banach space $E$ is said to have
\emph{bounded approximation property}
whenever for every compact set $K\subset E$ there is a
sequence of linear operators
$\{T_n\}$, $T_n\colon E\to E$, of finite rank (i.e.\
with finite-dimensional images), such that
$\|T_n x-x\|_E\to 0$ for all $x\in K$ as $n\to \infty$, and the operator
norms of $T_n$ are bounded by a universal constant $C>0$.
If one can choose this sequence so as to have $C=1$ then
one says that $E$ has \emph{metric approximation property}.
Clearly, the above convergence is uniform in $K$.
In fact, if $\{y_\nu\}\subset K$,
$y_\nu\to y$ as
$\nu\to \infty$, then
\begin{align*}
\|T_\nu y_\nu-y\|_E &\leq \|T_\nu y_\nu - T_\nu y\|_E + \|T_\nu y -
y\|_E
\leq
C \|y_\nu-y|| +\|T_\nu y - y\|_E \to 0
\end{align*}
as $\nu\to \infty$. A typical example of Banach spaces with
bounded approximation property is given by Banach spaces possessing Schauder
(topological) basis.

\subsection{Curves}

We equip the set of Lipschitz curves $\theta\colon [0,1]\to E$ with the distance
\begin{equation}\label{defdt}
d_\Theta(\theta_1,\theta_2):=\inf\left\{
\max_{t\in [0,1]}
d(\theta_1(t),\theta_2(\phi(t)))\,:\,\phi\colon [0,1]\to [0,1] \mbox{ bijective increasing}
\right\},
\end{equation}
and call
two
Lipschitz-continuous curves
 $\theta_1$, $\theta_2$:
$[0,1]\to E$
\emph{equivalent}, if
\[
d_\Theta(\theta_1,\theta_2)=0.
\]
It is not difficult to show that the equivalence of $\theta_1$ and $\theta_2$ means the existence
of continuous surjective nondecreasing functions
(called usually ``reparameterizations'') $\phi_1$, $\phi_2$: $[0,1]\to [0,1]$
such that $\theta_1(\phi_1(t))=\theta_2(\phi_2(t))$ for all $t\in [0,1]$.
 The set of equivalence classes of Lipschitz curves
equipped with the distance $d_\Theta$ will be denoted $\Theta(E)$
(we will further usually omit the reference to $E$ if it is clear from the context, and write simply $\Theta$).
In this way each $\theta\in \Theta$
can be clearly identified
with some oriented
rectifiable curve. In the sequel we will frequently slightly abuse
the language, identifying the
elements of $\Theta$ (i.e.\ oriented rectifiable
curves) with their parameterizations (i.e.\ Lipschitz-continuous paths
parameterizing such curves), when it cannot lead to a confusion.
It is easy to see that $\theta_\nu\to \theta$ in $\Theta$ implies
the Hausdorff convergence of the respective traces, though the converse
is clearly not true.

We call
$\theta\in \Theta$ an arc, if it is injective.

\subsection{Ascoli-Arzel\`{a} theorem revisited}

We will need the following version of an Ascoli-Arzel\`{a} type theorem.

\begin{proposition}\label{prop_AscoliArzela_metr1}
Let $E$ be a complete metric space,
$\theta_\nu\colon [0,1]\to E$ be Lipschitz functions with
uniformly bounded Lipschitz constants $\Lip\, \theta_\nu\leq L$ and satisfying the following uniform tightness condition:
for every $\varepsilon>0$ there is a compact set $K_\varepsilon\subset E$ such that $\mathcal{L}^1(\theta_\nu^{-1}(K_\varepsilon^c))\leq \varepsilon$ for all $\nu\in \N$.
Then there is a subsequence of $\theta_\nu$ uniformly converging to some Lipschitz function $\theta\colon [0,1]\to E$.
\end{proposition}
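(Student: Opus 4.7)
My plan is to reduce to the classical Ascoli--Arzel\`{a} theorem by combining two ingredients: (i) uniform equicontinuity, which is free from the Lipschitz bound $d(\theta_\nu(s),\theta_\nu(t))\le L|s-t|$, and (ii) the assertion that all the curves take values in a single compact subset of $E$, which is where the tightness hypothesis will enter.

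The main step is to show that the set $V:=\bigcup_{\nu\in\N}\theta_\nu([0,1])$ is totally bounded. Fix $\delta>0$ and choose $\varepsilon>0$ with $\varepsilon<\min(1/2,\delta/(2L))$. Applying the tightness assumption with $\varepsilon/2$ in place of $\varepsilon$, we obtain a compact set $K\subset E$ such that $\mathcal{L}^1(\theta_\nu^{-1}(E\setminus K))\le \varepsilon/2$ for every $\nu$. For any $t\in[0,1]$ the interval $[t-\varepsilon,t+\varepsilon]\cap[0,1]$ has Lebesgue measure at least $\varepsilon>\varepsilon/2$, so it must meet $\theta_\nu^{-1}(K)$; hence there exists $s\in[0,1]$ with $|s-t|\le\varepsilon$ and $\theta_\nu(s)\in K$, giving $d(\theta_\nu(t),\theta_\nu(s))\le L\varepsilon\le \delta/2$. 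Thus every $\theta_\nu(t)$ lies in the $(\delta/2)$-neighbourhood of $K$; covering $K$ by finitely many balls of radius $\delta/2$ and enlarging their radii to $\delta$ yields a finite $\delta$-cover of $V$. Since $\delta>0$ was arbitrary, $V$ is totally bounded, and completeness of $E$ implies that $\overline V$ is a compact metric space.

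Once this is done, the $\theta_\nu$ are viewed as equicontinuous functions from the compact interval $[0,1]$ into the compact space $\overline V$, and the classical Ascoli--Arzel\`{a} theorem delivers a subsequence converging uniformly to some continuous $\theta\colon[0,1]\to\overline V\subset E$. Passing to the limit in $d(\theta_{\nu_k}(s),\theta_{\nu_k}(t))\le L|s-t|$ shows that $\theta$ is itself $L$-Lipschitz.

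The only genuine obstacle is the total-boundedness step; all the rest is a routine invocation of Ascoli--Arzel\`{a}. The small technical point that requires care is replacing $\varepsilon$ by $\varepsilon/2$ when invoking the tightness hypothesis: without this strict-inequality margin, the interval $[t-\varepsilon,t+\varepsilon]\cap[0,1]$ at the endpoints $t=0$ or $t=1$ could have length exactly $\varepsilon$, leaving open the possibility that it is entirely contained in $\theta_\nu^{-1}(E\setminus K)$.
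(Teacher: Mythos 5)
Your proof is correct, but it follows a genuinely different route from the paper's. The paper first treats the case of a Banach space with the bounded approximation property: it composes the curves with finite-rank operators, applies the classical finite-dimensional Ascoli--Arzel\`{a} theorem to the projected curves, and then runs a three-epsilon argument (using the tightness hypothesis to find, near any given time $s$, a time $t$ at which both curves lie in the fixed compact set) to conclude that the original sequence is uniformly Cauchy; it then transfers the result to complete separable metric spaces via Assouad's bi-Lipschitz embedding into $c_0$, and to general complete spaces by restricting to the closure of the $\sigma$-compact union of traces. You instead observe that the tightness condition combined with the Lipschitz bound forces \emph{every} point of \emph{every} curve to lie within $L\varepsilon$ of a single compact set, so that the union $V$ of all traces is totally bounded, hence has compact closure by completeness; the conclusion then follows from the Ascoli--Arzel\`{a} theorem for equicontinuous families of maps into a compact metric space. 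Your total-boundedness step is sound (including the care taken at the endpoints of $[0,1]$ by using $\varepsilon/2$ in the tightness hypothesis), and the metric-space version of Ascoli--Arzel\`{a} you invoke is standard. Your argument is shorter and entirely avoids the approximation-property and embedding machinery; what the paper's route buys is that it only ever appeals to the classical Ascoli--Arzel\`{a} theorem for maps into a finite-dimensional space, at the cost of importing Assouad's embedding theorem and the discussion of Schauder bases.
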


\begin{proof}
We prove first the statement in the case when $E$ is a Banach space
with bounded approximation property. In this case, given an $\varepsilon>0$, consider a compact set $K\subset E$ such that
 $\mathcal{L}^1(\theta_\nu^{-1}(K^c))\leq \varepsilon/8L$ for all $\nu\in \N$, and a linear finite rank operator $T$ with operator norm $C>0$ such that
 \[
 \sup_{x\in K} \|x-T x\| \leq \varepsilon/6.
 \]
Denoting $\theta_\nu'(t):=T\theta_\nu(t)$, one has that
 $\theta_\nu'\colon [0,1]\to E_n$ are $CL$-Lipschitz functions
 with values in a finite dimensional subspace $E_n\subset E$. Since
 $\theta_\nu'([0,1])\cap T K\neq \emptyset$ for all $\nu\in \N$, then
 all $\theta_\nu'$ are uniformly bounded, and hence by Ascoli-Arzel\`{a} theorem there is a subsequence of $\nu$ (which we do not relabel for brevity) such that $\theta_\nu'$ are uniformly convergent.
 Let then $N\in \N$ be such that
 \[
 \sup_{t\in [0,1]} \|\theta_k'(t)-\theta_\nu'(t)\| \leq \varepsilon/6
 \]
 for all $k\geq N$ and $\nu\geq N$.
 Thus for such $k$ and $\nu$ we get
\begin{align*}
\|\theta_k(t)-\theta_\nu(t)\| & \leq \|\theta_k(t)-\theta_k'(t)\|+ \|\theta_k'(t)-\theta_\nu'(t)\| + \|\theta_\nu'(t)-\theta_\nu(t)\|\leq 3\varepsilon/6=\varepsilon/2
\end{align*}
whenever $t\in \theta_\nu^{-1}(K)\cap \theta_k^{-1}(K)$.
Minding that
\begin{align*}
\mathcal{L}^1\left(\left(\theta_\nu^{-1}(K)\cap \theta_k^{-1}(K)\right)^c\right) &=
\mathcal{L}^1\left(\theta_\nu^{-1}(K^c)\cup \theta_k^{-1}(K^c)\right)\\
&\leq
\mathcal{L}^1\left(\theta_\nu^{-1}(K^c)\right)+ \mathcal{L}^1\left(\theta_k^{-1}(K^c)\right)\leq
\varepsilon/4L,
\end{align*}
we obtain that for every $s\in [0,1]$ there is a $t\in \theta_\nu^{-1}(K)\cap \theta_k^{-1}(K)$ such that $|t-s|\leq \varepsilon/4L$. Therefore,
\begin{align*}
\|\theta_k(s)-\theta_\nu(s)\| & \leq \|\theta_k(s)-\theta_k(t)\|+ \|\theta_k(t)-\theta_\nu(t)\| + \|\theta_\nu(t)-\theta_\nu(s)\|\\
& \leq L|t-s|+ \varepsilon/2 + L|t-s|\leq \varepsilon
\end{align*}
 for all $k\geq N$ and $\nu\geq N$. We have shown therefore that the chosen subsequence of $\theta_\nu$ is 
 uniformly Cauchy,
 hence 
 uniformly converging
 to a $L$-lipschitz function
 $\theta\colon [0,1]\to E$ as claimed.

 For the case when $E$ is a 
 complete separable metric space, recall that
 by~\cite{Assouad78} there is a bi-Lipschitz embedding $g\colon E\to c_0$,
where $c_0\subset \ell^\infty$ stands for the Banach space of vanishing
sequences, which possesses the Schauder basis and hence satisfies the bounded approximation property. It suffices then to apply the proven result
to the sequence $g\circ \theta_\nu\colon [0,1]\to c_0$, obtaining that a subsequence (not relabeled) of
$\{g\circ \theta_\nu\}$ 
is uniformly Cauchy, hence so is
the sequence $\{\theta_\nu\}$ (because $g^{-1}$ is Lipschitz),
 and thus the latter converges
uniformly to some $L$-Lipschitz function.

Finally, if $E$ is a generic complete metric space (not necessarily separable), we just recall that $\cup_\nu \theta_\nu([0,1])$ is
$\sigma$-compact, hence separable, and we may consider $\theta_\nu$
as acting into the closure $\overline{\cup_\nu \theta_\nu([0,1])}$
of the latter, and refer to the above proven case.
\end{proof}

\section{Subcurrents}

In the sequel we will be frequently using the notion
of a subcurrent of a given current as introduced
in the definition below.

\begin{definition}\label{def_subcurrent}
We say that  $S$ is a \emph{subcurrent} of $T$,
and write $S\leq T$, where $T$ and $S$ are $k$-dimensional currents,
whenever
\[
  \MM(T-S) + \MM(S) \le \MM(T).
\]
\end{definition}

We now provide a series of remarks concerning the above definition.

\begin{remark}
Since the inequality
\[
  \MM(T-S) + \MM(S) \ge \MM(T)
\]
always holds true, then $S$ is a subcurrent of $T$,
if and only if the equality
actually holds.
\end{remark}

\begin{remark}\label{rem_subcurr2}
If $R\leq S$ and $S\leq T$, then $R\leq T$.
In fact,
\begin{align*}
  \MM(T) &\ge \MM(S) + \MM(T-S)
  \ge \MM(R) + \MM(S-R) + \MM(T-S)\\
  &\ge \MM(R) + \MM(T-R),
\end{align*}
because of the triangle inequality $\MM(T-R) \le
\MM(T-S)+\MM(S-R)$.
\end{remark}

\begin{remark}\label{rem_subcurr3}
Let $T$ be a current 
and let $e\subset E$ be a
Borel set.
Then $T \res e \leq T$.
In fact,
\[
  \MM(T)
=\mu_T(E) =  \mu_T(e)+\mu_T(e^c) 
=   \MM(T\res e) + \MM(T-T\res e).
\]
\end{remark}

\begin{remark}\label{rem_subcurr4}
If 
$S\leq T$, then for every Borel set
$e\subset E$ one has
$S \res e \leq T\res e$.
In fact, by the triangle inequality
\begin{align*}
\MM(T\res e)&\leq \MM((T - S)\res e)+ \MM(S\res e)\\
\MM(T\res e^c) &\leq \MM((T-
S)\res e^c) + \MM(S\res e^c),
\end{align*}
 while
if we sum the above inequalities, then as a result we get an
equality since $S\leq T$. Hence the above inequalities are in fact
equalities for all Borel $e\subset E$. In particular, this also
implies
\begin{equation}\label{eq:musum}
\mu_T = \mu_{T-S} +\mu_S,
\end{equation}
and hence $\mu_S\leq \mu_T$. On the other hand, if~\eqref{eq:musum}
holds, then $S\le T$ since
\[
  \MM(S) + \MM(T-S) = \mu_S(E) + \mu_{T-S}(E)
  = \mu_T(E) = \MM(T).
\]
\end{remark}

\begin{lemma}\label{lm:compconv}
Let $T_\nu$ be a sequence of currents,
$S_\nu\leq T_\nu$, and suppose that both
$S_\nu\weakto S$ and $T_\nu\weakto T$ weakly as currents as $\nu\to \infty$, while
$\MM(T_\nu)\to \MM(T)$.
Then 
$S\leq T$ and $\MM(S_\nu)\to \MM(S)$.
\end{lemma}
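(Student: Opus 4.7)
The plan is to rely on two standard facts about Ambrosio--Kirchheim currents: mass is lower semicontinuous under weak convergence, and weak convergence is linear (so $T_\nu-S_\nu \weakto T-S$). Together with the fact that $S_\nu \leq T_\nu$ gives the \emph{equality} $\MM(S_\nu)+\MM(T_\nu-S_\nu)=\MM(T_\nu)$ (see the remark following Definition~\ref{def_subcurrent}), these are all the ingredients I need.

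First I would establish $S\leq T$. By lower semicontinuity of mass,
\[
\MM(S)+\MM(T-S) \leq \liminf_\nu \MM(S_\nu) + \liminf_\nu \MM(T_\nu-S_\nu).
\]
Using $\liminf a_\nu + \liminf b_\nu \leq \liminf(a_\nu+b_\nu)$ together with $\MM(S_\nu)+\MM(T_\nu-S_\nu)=\MM(T_\nu) \to \MM(T)$, the right-hand side is at most $\MM(T)$. Combined with the always-true reverse inequality $\MM(S)+\MM(T-S)\geq \MM(T)$, this gives $\MM(S)+\MM(T-S)=\MM(T)$, hence $S\leq T$.

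For the mass convergence, the lower bound $\MM(S)\leq \liminf_\nu \MM(S_\nu)$ is immediate from lower semicontinuity. For the upper bound, I extract any subsequence (not relabelled) along which $\MM(S_\nu)$ converges to some $a\geq \MM(S)$. Then along this subsequence
\[
\MM(T_\nu-S_\nu)=\MM(T_\nu)-\MM(S_\nu) \longrightarrow \MM(T)-a,
\]
so lower semicontinuity forces $\MM(T-S)\leq \MM(T)-a$, i.e.\ $a\leq \MM(T)-\MM(T-S)=\MM(S)$, the last equality coming from $S\leq T$ which we just proved. Hence $a=\MM(S)$, and since every convergent subsequence of $\MM(S_\nu)$ has the same limit $\MM(S)$, the whole sequence converges.

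There is no real obstacle here: the argument is a textbook squeeze based on lower semicontinuity and the exact additivity of mass on a subcurrent. The only thing to be careful about is to use that $\MM(T_\nu)\to \MM(T)$ (the limit exists, not just $\liminf$) so that one may freely subtract and not only use $\liminf$ inequalities — this is precisely what makes the argument for $\limsup \MM(S_\nu)\leq \MM(S)$ work.
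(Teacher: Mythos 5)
Your proof is correct and follows essentially the same route as the paper's: both establish $S\leq T$ via lower semicontinuity of mass applied to $S_\nu$ and $T_\nu-S_\nu$ together with superadditivity of $\liminf$, and both deduce $\MM(S_\nu)\to\MM(S)$ by a subsequence/liminf argument using the exact additivity $\MM(S_\nu)+\MM(T_\nu-S_\nu)=\MM(T_\nu)$. The only cosmetic gap is that your subsequence extraction tacitly uses boundedness of $\MM(S_\nu)$, which is immediate from $\MM(S_\nu)\leq\MM(T_\nu)$.
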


\begin{proof}
Consider the sequence $\{T_\nu-S_\nu\}$ which converges to $T-S$ in the weak sense
of currents. By the lower
semicontinuity of $\MM$ we know that
\begin{equation}\label{diseq:comp}
\begin{aligned}
\MM(S)+\MM(T-S)
&\le\liminf_{k\to\infty} \MM(S_\nu) + \liminf_{k\to\infty}\MM(T_\nu-S_\nu)\\
&\le\liminf_{k\to\infty} [\MM(S_\nu) + \MM(T_\nu-S_\nu)] \\
& \le \liminf_{k\to\infty} \MM(T_\nu)=\MM(T),
\end{aligned}
\end{equation}
i.e.\ $S\leq T$.
Since we also have $\MM(T)\le \MM(S)+\MM(T-S)$, the inequalities
in~\eqref{diseq:comp} actually are equalities. Also, since $\MM(T-S)\le
\liminf_\nu\MM(T_\nu-S_\nu)$ we obtain $\MM(S)=\liminf_\nu \MM(S_\nu)$.
This is also true for every subsequence of $S_\nu$, hence we have full
convergence of the sequence $\MM(S_\nu)$ to $\MM(S)$ as $\nu\to \infty$.
\end{proof}

We give now the definition of a cycle.

\begin{definition}
We say that $C\in \M_k(E)$ is a \emph{cycle} of $T\in \M_k(E)$, if $C\leq T$ and
$\partial C=0$.
We say that $T$ is \emph{acyclic}, if $C=0$ is the only cycle of $T$.
\end{definition}

It is easy now to prove the possibility to find such a cycle of
every current $T$, 
that
$T-C$ is acyclic. Of course, such a representation of a current as a sum of a cycle and an acyclic current
is not unique, as can be seen, for instance, on the example of a current defined by a curve going from the south
pole of $S^2$ to the north pole along some big semicircle, then back to the south pole along another big semicircle
and finally back again to the north pole along a third big semicircle.

\begin{proposition}\label{prop_acycl1}
Every current $T$
contains a
cycle $C$ such that $T-C$ is acyclic.
\end{proposition}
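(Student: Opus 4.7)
The plan is to produce $C$ as a cycle of $T$ of maximal mass and then obtain acyclicity of $T-C$ by contradiction. Set
\[
m:=\sup\{\MM(C')\,:\, C'\text{ is a cycle of }T\},
\]
so that $0\le m\le \MM(T)<\infty$ (the zero current is a cycle, and every cycle is a subcurrent of $T$). Pick a maximizing sequence $\{C_\nu\}$ of cycles of $T$ with $\MM(C_\nu)\to m$.

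The next step is to extract a weak limit $C_\nu\weakto C$ and identify it as a maximizing cycle. Since $C_\nu\le T$, Remark~\ref{rem_subcurr4} gives $\mu_{C_\nu}\le \mu_T$, so the mass measures of the $C_\nu$'s are uniformly concentrated on a single $\sigma$-compact set (the one on which the tight finite measure $\mu_T$ is concentrated, using the standing Ulam-cardinal hypothesis). Combined with $\MM(C_\nu)\le \MM(T)$ and $\MM(\partial C_\nu)=0$, this places $\{C_\nu\}$ under the hypotheses of the Ambrosio--Kirchheim compactness theorem, which supplies a subsequence (not relabeled) with $C_\nu\weakto C$. Lemma~\ref{lm:compconv} applied to $S_\nu:=C_\nu$ and the constant sequence $T_\nu:=T$ then yields both $C\le T$ and $\MM(C)=\lim_\nu\MM(C_\nu)=m$; weak continuity of $\partial$ gives $\partial C=\lim_\nu \partial C_\nu=0$. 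So $C$ is a cycle of $T$ of mass $m$.

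To finish, suppose for contradiction that $T-C$ admits a nonzero cycle $C'$; I claim $C+C'$ is then a cycle of $T$ with $\MM(C+C')>m$, contradicting maximality. Clearly $\partial(C+C')=0$. Using the definition of subcurrent twice (once for $C\le T$, once for $C'\le T-C$) we compute
\[
\MM(T-C-C')=\MM(T-C)-\MM(C')=\MM(T)-\MM(C)-\MM(C'),
\]
while $\MM(C+C')\le \MM(C)+\MM(C')$ by the triangle inequality. Summing gives $\MM(T-(C+C'))+\MM(C+C')\le \MM(T)$, hence $C+C'\le T$. Comparing the two ensuing measure decompositions
\[
\mu_T=\mu_{C+C'}+\mu_{T-C-C'}=\mu_C+\mu_{C'}+\mu_{T-C-C'}
\]
provided by Remark~\ref{rem_subcurr4} forces $\mu_{C+C'}=\mu_C+\mu_{C'}$, and therefore $\MM(C+C')=\MM(C)+\MM(C')>m$ since $C'\neq 0$, which is the sought contradiction.

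The main obstacle is the compactness step: one must verify the hypotheses of the Ambrosio--Kirchheim compactness theorem for $\{C_\nu\}$ (in particular the uniform tightness of the mass measures), which is where the Ulam-cardinal assumption, together with the domination $\mu_{C_\nu}\le \mu_T$, becomes essential. Once this is in hand, Lemma~\ref{lm:compconv} automatically propagates the subcurrent relation to the limit, and the closing contradiction is really an incarnation of the mass-additivity identity $\MM(C+C')=\MM(C)+\MM(C')$, which is the algebraic engine of the whole argument.
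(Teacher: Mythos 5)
Your proof is correct, but it takes a genuinely different route from the paper's. You obtain $C$ as a cycle of \emph{maximal} mass, extracting it from a maximizing sequence via the Ambrosio--Kirchheim compactness theorem (the hypotheses of which you correctly verify: $\MM(C_\nu)\le\MM(T)$, $\partial C_\nu=0$, and uniform tightness of $\mu_{C_\nu}$ from the domination $\mu_{C_\nu}\le\mu_T$ of Remark~\ref{rem_subcurr4} together with tightness of $\mu_T$), and then Lemma~\ref{lm:compconv} with the constant sequence $T_\nu:=T$ gives $C\le T$ and $\MM(C)=m$; the closing contradiction, via the additivity $\MM(C+C')=\MM(C)+\MM(C')$ for $C\le T$, $C'\le T-C$, is exactly the algebraic identity the paper also uses. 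The paper instead runs a greedy iteration: it peels off cycles $C_\nu$ with $\MM(C_\nu)\ge\xi(T_\nu)/2$, shows the maximal cycle mass $\xi(T_\nu)$ halves at each step, and concludes that $\sum_\nu C_\nu$ converges \emph{in mass}, so that no compactness theorem and no tightness of $\mu_T$ are needed --- the residual $T'=\lim T_\nu$ is acyclic because $\xi(T_\nu)\to 0$. The trade-off is that your argument is shorter and yields the extra structural information that the cycle removed can be taken of maximal mass, but it leans on the deep compactness theorem and hence on the standing set-theoretic (or tightness) assumption, whereas the paper's iterative construction is entirely elementary and works with mass-norm convergence alone. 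Both are complete proofs.
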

\begin{proof}
Define
\[
 \xi(T) := \sup \{\MM(C) \colon \text{$C$ is a cycle of $T$}\}.
\]
Let $C_0$ be a cycle of $T_0:= T$ such that $\MM(C_0)\geq \xi(T_0)/2$ and let $T_1:= T_0-C_0$.
Proceeding by induction we can define a sequence of currents $C_\nu$ such that
$C_\nu$ is a cycle of $T_\nu$ with $\MM(C_\nu) \geq \xi(T_\nu)/2$ and $T_{\nu+1} := T_\nu - C_\nu$.

Let $C$ be any cycle of $T_{\nu+1} = T_\nu - C_\nu$.
Putting together $C_\nu \le T_\nu$ and $C\le T_\nu - C_\nu$,
with the use of the triangle inequality $\MM(C+C_\nu)\leq \MM(C) + \MM(C_\nu)$ we obtain
\begin{align*}
  \MM(C+C_\nu) + \MM(T_\nu - C_\nu - C) & \leq \MM(C) + \MM(C_\nu) + \MM(T_\nu - C_\nu - C) = \MM(T_\nu),
\end{align*}
hence $\tilde C := C_\nu + C$ is a cycle of $T_\nu$ and $\MM(\tilde C)=\MM(C) + \MM(C_\nu)$ (i.e.\
$C\leq \tilde C$).
This means that $\xi(T_\nu) \ge \MM(\tilde C) = \MM(C_\nu) + \MM(C)$, hence
\[
   \MM(C) \le \xi(T_\nu) - \MM(C_\nu) \le \xi(T_\nu)/2,
\]
and in particular $\xi(T_{\nu+1}) \le \xi(T_{\nu})/2$.

One has therefore that
\[
\MM(C_\nu)\leq \xi(T_\nu) \leq \frac{\xi(T_0)}{ 2^\nu},
\]
so that
$\sum_\nu C_\nu$ is convergent in mass and hence so is the sequence
$\{T_\nu\}$, since $T_\nu = T-\sum_{k=0}^\nu C_k$. Letting $T':=\lim_\nu T_\nu$, we have $T'\leq T$
by Lemma~\ref{lm:compconv} and we
claim
that $T'$ is acyclic. In fact, let $\nu\in \N$ be arbitrary. Since
$T_{\nu+k}\leq T_\nu$ (because, in fact, $T_{\nu+k}\leq\ldots\leq T_{\nu+1}\leq T_\nu$)
for all $k\in \N$, 
then passing to the limit as $k\to\infty$
we get again by Lemma~\ref{lm:compconv} that $T'\leq T_\nu$.  Thus if $C'$ is a cycle of $T'$, it is also
a cycle of $T_\nu$, so that $\MM(C')\leq \xi(T_\nu)$, and since $\xi(T_\nu)\to 0$ we obtain that $C'=0$.
\end{proof}

\section{Smirnov decomposition of currents}

To each $\theta\in \Theta$ we associate the integral one-dimensional
current $\ld\theta \rd$ defined by
\[
\ld\theta \rd(f\,d\pi):=
\int_0^1
  f(\theta(t))\,d\pi(\theta(t)) =\theta_{\#}\ld 0,1 \rd (f\,d\pi)
\]
(note that the latter integral does not depend on the parameterization of $\theta$
so it is
well defined on equivalence classes $\theta\in \Theta$).
We also define the \emph{parametric length} of $\theta$ as
\[
  \ell(\theta) := \int_0^1 |\dot{\theta}(t)| \, dt.
\]
Clearly, one has
$\MM(\ld\theta\rd)\leq \ell(\theta)$,
while when $\theta$ is an arc, then
\[
\H(\theta)=\MM(\ld\theta\rd)= \ell(\theta).
\]

The following rather simple assertion is valid.

\begin{lemma}\label{lm_Fcontinuous}
If $\theta_\nu\in \Theta$ be curves with uniformly bounded length,
$\ell(\theta_\nu)\leq C <+\infty$ for all $\nu\in \N$, and
 $\theta_\nu\to \theta\in \Theta$
as $\nu\to \infty$, then
$\ld\theta_\nu\rd(f\,d\pi)\to \ld\theta\rd(f\,d\pi)$
for every $f\,d\pi\in D^1(E)$. In other words,
the map $\theta\in \Theta\mapsto \ld\theta \rd$
is a continuous embedding of each subset of curves from $\Theta$
with uniformly bounded lengths
into the space of integral
one-dimensional currents endowed with weak topology of currents.
\end{lemma}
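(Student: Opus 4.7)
The plan is to reduce the asserted weak convergence $\ld\theta_\nu\rd \weakto \ld\theta\rd$ to convergence of a pair of Lebesgue--Stieltjes integrals on $[0,1]$, using reparameterization of $\theta$ to bring it uniformly close to $\theta_\nu$, and then bounding the error by a splitting plus an integration by parts. Fix $(f,\pi)\in \Lip_b(E)\times \Lip(E)$; it suffices to show $\ld\theta_\nu\rd(f\,d\pi)\to \ld\theta\rd(f\,d\pi)$.

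By the definition~\eqref{defdt} of $d_\Theta$, for each $\nu$ I would pick an increasing continuous bijection $\phi_\nu\colon[0,1]\to[0,1]$ such that
\[
\delta_\nu := \sup_{t\in [0,1]} d\bigl(\theta_\nu(t),\theta\circ\phi_\nu(t)\bigr) \le d_\Theta(\theta_\nu,\theta) + \tfrac{1}{\nu} \to 0.
\]
The classical change-of-variables formula for Lebesgue--Stieltjes integrals under a monotone homeomorphism then rewrites
\[
\ld\theta\rd(f\,d\pi) = \int_0^1 f(\theta\circ\phi_\nu(t))\,d(\pi\circ\theta\circ\phi_\nu)(t),
\]
so that both integrals are now over $[0,1]$. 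Note that $\pi\circ\theta_\nu$ is Lipschitz and $\pi\circ\theta\circ\phi_\nu$ is continuous BV, with total variations at most $\Lip(\pi)\cdot\ell(\theta_\nu)$ and $\Lip(\pi)\cdot\ell(\theta\circ\phi_\nu)=\Lip(\pi)\cdot\ell(\theta)$ respectively, hence uniformly bounded by $\Lip(\pi)\cdot C$. The BV norm of $f\circ\theta\circ\phi_\nu$ is analogously at most $\Lip(f)\cdot C$.

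I would then split the difference as
\begin{align*}
\ld\theta_\nu\rd(f\,d\pi)-\ld\theta\rd(f\,d\pi) &= \int_0^1 \bigl[f\circ\theta_\nu - f\circ\theta\circ\phi_\nu\bigr]\,d(\pi\circ\theta_\nu)\\
&\quad + \int_0^1 f\circ\theta\circ\phi_\nu\,d(\pi\circ\theta_\nu - \pi\circ\theta\circ\phi_\nu).
\end{align*}
The first summand is bounded in absolute value by $\Lip(f)\,\delta_\nu\cdot\Lip(\pi)\,C$, which tends to $0$. For the second, an integration by parts for continuous BV functions produces boundary terms of size at most $2\,\|f\|_\infty\,\Lip(\pi)\,\delta_\nu$ plus a remaining integral of $\pi\circ\theta_\nu - \pi\circ\theta\circ\phi_\nu$ against $d(f\circ\theta\circ\phi_\nu)$, which is controlled by $\Lip(\pi)\,\delta_\nu\cdot\Lip(f)\,C$. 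Letting $\nu\to\infty$ gives the claim.

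The only delicate point is the invocation of the classical Stieltjes identities, namely reparameterization invariance and integration by parts, for the function $\pi\circ\theta\circ\phi_\nu$: this is only continuous BV, not Lipschitz, since $\phi_\nu$ is not required to be Lipschitz. Both identities remain valid in the continuous-BV setting (no spurious jump contributions appear thanks to the continuity), after which the estimates above are entirely routine.
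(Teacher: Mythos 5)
Your proof is correct, and it takes a genuinely different route from the paper. The paper's proof is a one-liner: it rewrites $\ld\theta_\nu\rd(f\,d\pi)=\ld 0,1\rd(f\circ\theta_\nu\,d\pi\circ\theta_\nu)$ and appeals to the Ambrosio--Kirchheim continuity axiom (theorem~3.5(ii) of \cite{AmbrKirch00}) for the fixed current $\ld 0,1\rd$ applied to the pulled-back data; it only mentions in passing that ``elementary calculus'' would also do. That route implicitly requires arranging a parameterization (e.g.\ constant speed) in which $f\circ\theta_\nu\to f\circ\theta$ and $\pi\circ\theta_\nu\to\pi\circ\theta$ pointwise with equibounded Lipschitz constants, which is where the convergence in $d_\Theta$ has to be unwound. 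You instead carry out the elementary-calculus alternative in full: you compare $\theta_\nu$ not with a fixed parameterization of $\theta$ but with the varying reparameterizations $\theta\circ\phi_\nu$ (which all represent the same current $\ld\theta\rd$ by Stieltjes reparameterization invariance), and you control the difference by the bilinear splitting plus integration by parts. What your approach buys is that you never need uniform Lipschitz bounds on the reparameterized curves --- only the total-variation bounds $\Lip(\pi)\,\ell(\cdot)$ and $\Lip(f)\,\ell(\cdot)$, which is exactly where the hypothesis $\ell(\theta_\nu)\le C$ enters --- and you avoid invoking the currents machinery altogether; the price is the care needed with Riemann--Stieltjes integrals whose integrators are merely continuous BV rather than Lipschitz, a point you correctly identify and which is indeed harmless (reparameterization invariance and integration by parts both survive, with no jump terms, by continuity). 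All the quantitative estimates ($\Lip(f)\,\delta_\nu\,\Lip(\pi)\,C$ for the first summand, $2\|f\|_\infty\Lip(\pi)\,\delta_\nu+\Lip(\pi)\,\delta_\nu\,\Lip(f)\,C$ for the second) check out, and boundedness of $f$ is used exactly where it must be, in the boundary terms.
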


\begin{proof}
Note that
\[
\ld\theta_\nu\rd(f\,d\pi)= \ld 0,1 \rd (f\circ\theta_\nu\,d\pi\circ\theta_\nu),
\]
so that the statement follows from the basic continuity property (theorem~3.5(ii) from~\cite{AmbrKirch00}) of currents
(alternatively, recalling $\ld\theta_\nu\rd(f\,d\pi)= \int_0^1 (f\circ\theta_\nu)(x)\,d(\pi\circ\theta_\nu) (x)$,
one could have used just elementary calculus).
\end{proof}


Further on we call any finite positive Borel measure $\eta$ on $\Theta$ a \emph{transport}, because
it may be interpreted, roughly speaking, as the information on the amount of mass transported over each curve $\theta\in\Theta$.
Given a transport $\eta$ on $\Theta$ we define a
functional $T_\eta$ on $D^1(E)$ by the formula
\begin{equation}\label{eq_etaT}
  T_\eta(\omega)
  := 
\int_\Theta \ld\theta\rd(\omega)\, d \eta(\theta)
\end{equation}
for an $\omega\in D^1(E)$.
The following theorem shows that $T_\eta$ is a normal current
under natural assumptions on $\eta$.

\begin{theorem}\label{th_Teta1}
Let $\eta$ be a  transport 
satisfying
\[
\int_\Theta \MM(\ld\theta\rd)\, d\eta(\theta)<+\infty.
\]
Then~\eqref{eq_etaT}
defines a
normal one-dimensional current $T=T_\eta$ with
\[
  \partial T
  = \eta(1)-\eta(0),\text{ where }
\eta(i):= (e_i)_\#\eta, \quad e_i(\theta):=\theta(i),\qquad i=0,1.
\]
In particular, if $\eta(1)\wedge \eta(0)=0$, then
\[
(\partial T)^+=\eta(1),\qquad (\partial T)^-=\eta(0),
\]
where
$(\partial T)^\pm$ are the positive and the negative part of the measure
$\partial T$ respectively.
Furthermore, 
\[
\MM(T)
\leq \int_\Theta \MM(\ld\theta\rd)\,  d \eta(\theta)\leq\int_\Theta \ell(\theta)\,  d \eta(\theta),
\]
and
for all Borel sets $e\subset E$ one has
\begin{equation}\label{eq_MuTless}
\mu_T(e) 
\leq
\int_\Theta \mu_{\ld\theta\rd}( e)\, d\eta(\theta).
\end{equation}
\end{theorem}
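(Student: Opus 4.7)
The plan is to establish in turn (i) that $T_\eta$ is a well-defined $1$-current of finite mass, (ii) the mass bound~\eqref{eq_MuTless}, (iii) the boundary identity, and (iv) the Jordan decomposition conclusion.

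First, multilinearity of $(f,\pi)\mapsto T_\eta(f\,d\pi)$ is inherited from each $\ld\theta\rd$ by linearity of the integral in $\theta$. For the continuity axiom of Ambrosio–Kirchheim currents, if $f_\nu\to f$ pointwise with $\sup_\nu\|f_\nu\|_\infty<+\infty$ and $\pi_\nu\to\pi$ pointwise with $\sup_\nu\Lip(\pi_\nu)<+\infty$, then for each $\theta\in\Theta$ the classical continuity of integral currents (theorem~3.5(ii) of~\cite{AmbrKirch00}) yields $\ld\theta\rd(f_\nu\,d\pi_\nu)\to\ld\theta\rd(f\,d\pi)$, while the uniform pointwise bound $|\ld\theta\rd(f_\nu\,d\pi_\nu)|\le C\,\MM(\ld\theta\rd)$ together with the standing hypothesis supplies an $\eta$-integrable dominant. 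The required $\eta$-measurability of $\theta\mapsto\ld\theta\rd(\omega)$ is obtained by exhausting $\Theta$ with the Borel subsets $\{\MM(\ld\theta\rd)\le n\}$ and invoking Lemma~\ref{lm_Fcontinuous}, which yields weak continuity (hence Borel measurability) of $\theta\mapsto\ld\theta\rd$ on each such subset.

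Second, I would introduce the auxiliary positive Borel measure
\[
\nu(e):=\int_\Theta \mu_{\ld\theta\rd}(e)\,d\eta(\theta),
\]
which is finite by Fubini–Tonelli since $\nu(E)=\int_\Theta\MM(\ld\theta\rd)\,d\eta<+\infty$. The chain of estimates
\[
|T_\eta(f\,d\pi)|\le\int_\Theta|\ld\theta\rd(f\,d\pi)|\,d\eta(\theta)\le \Lip(\pi)\int_\Theta\!\int_E|f|\,d\mu_{\ld\theta\rd}\,d\eta(\theta)=\Lip(\pi)\int_E|f|\,d\nu
\]
shows $\nu$ satisfies the defining inequality for the mass measure of $T_\eta$, so by minimality of $\mu_{T_\eta}$ we obtain $\mu_{T_\eta}\le\nu$, which is exactly~\eqref{eq_MuTless}. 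In particular $\MM(T_\eta)\le\nu(E)<+\infty$. For the boundary, direct evaluation for $f\in\Lip_b(E)$ gives
\[
(\partial T_\eta)(f)=T_\eta(1\,df)=\int_\Theta\bigl(f(\theta(1))-f(\theta(0))\bigr)\,d\eta(\theta)=\eta(1)(f)-\eta(0)(f),
\]
so $\partial T_\eta=\eta(1)-\eta(0)$ as signed measures, whence $\MM(\partial T_\eta)\le 2\eta(\Theta)<+\infty$ and $T_\eta$ is normal. If in addition $\eta(0)\wedge\eta(1)=0$, this identity is already the Jordan decomposition, whose uniqueness forces $(\partial T_\eta)^+=\eta(1)$ and $(\partial T_\eta)^-=\eta(0)$.

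The main technical obstacle I foresee is the measurability bookkeeping alluded to above: one has to verify that $\theta\mapsto\ld\theta\rd(\omega)$ and $\theta\mapsto\mu_{\ld\theta\rd}(e)$ are $\eta$-measurable for all admissible $\omega$ and all Borel $e\subset E$. Once this is handled by exhausting $\Theta$ by the subsets on which $\MM(\ld\theta\rd)$ stays uniformly bounded and applying Lemma~\ref{lm_Fcontinuous}, the remainder of the argument is a routine interplay of Fubini–Tonelli, the dominated convergence theorem, the defining property of the mass measure, and the uniqueness of the Jordan decomposition.
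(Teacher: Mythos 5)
Your proposal is correct and follows essentially the same route as the paper: the mass estimate $|T_\eta(f\,d\pi)|\le\Lip(\pi)\int_\Theta\int_E|f|\,d\mu_{\ld\theta\rd}\,d\eta$ yielding~\eqref{eq_MuTless} and finite mass, the direct computation $\partial T_\eta(f)=\int_\Theta(f(\theta(1))-f(\theta(0)))\,d\eta$, and uniqueness of the Jordan decomposition; you merely add the (reasonable) verification of the current axioms and of measurability, which the paper leaves implicit. One small correction to that extra bookkeeping: Lemma~\ref{lm_Fcontinuous} requires uniformly bounded \emph{lengths} $\ell(\theta)$, not masses $\MM(\ld\theta\rd)$, so you should exhaust $\Theta$ by the closed sublevel sets $\{\ell(\theta)\le n\}$ (whose union is all of $\Theta$ since every Lipschitz curve has finite parametric length) rather than by $\{\MM(\ld\theta\rd)\le n\}$.
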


\begin{proof}
It suffices to prove that $T=T_\eta$
has finite mass
and finite boundary mass.
According to the definition of mass
\[
|T(f\,d\pi)| \leq \int_{\Theta} \left(\Lip(\pi) \int_E |f|\,d\mu_{\ld\theta\rd}\right)\,
d \eta(\theta),
\]
 which gives~\eqref{eq_MuTless} and hence in particular shows that $T$ has finite mass
\[
  \MM(T) \le  \int_{\Theta} \MM(\ld\theta\rd)\, d \eta(\theta) <+\infty.
\]
Finally, the calculation
\begin{align*}
\partial T(f)
  &= T (1\,df)
  = \int_\Theta \left(\int_0^1 \,d f(\theta(t))
   \right)\, d\eta(\theta)
 =\int_\Theta [f(\theta(1))-f(\theta(0))] \, d\eta(\theta)\\
 &=\int_\Theta f(e_1(\theta))\, d\eta(\theta)-
 \int_\Theta f(e_0(\theta))\, d\eta(\theta)= \int_{E} f(x)\, d (\eta(1)-\eta(0))
\end{align*}
concludes the proof.
\end{proof}

It is worth mentioning that the inequality in~\eqref{eq_MuTless}
may be strict, as the following example shows.

\begin{example}\label{ex_MuT}
Let $\bar e_i$, $i=1,2$ stand for the unit vectors along axis
$x_i$ in $\R^2$, and
let $\Theta_1\subset \Theta$ be a set of paths
$\theta$
in $Q:=[0,1]\times[0,1]$
admitting a parameterization
$\theta(t)=(t, x_2)$, $t\in [0,1]$,
for some $x_2\in [0,1]$.
Define $\eta_1$ by the formula
\[
\eta_1(e):=\H(e_0(e\cap\Theta_1))
\]
for all Borel $e\subset \Theta$, where $e_0(\theta):=\theta(0)$.
Clearly, $T_{\eta_1}=\bar e_1\wedge {\mathcal L}^2\res Q$.
Analogously, letting $\Theta_2\subset \Theta$ be a set of paths
$\theta$ admitting a parameterization
$\theta(t)=(x_1,t)$, $t\in [0,1]$,
for some $x_1\in [0,1]$, and
defining $\eta_2$ by the formula
\[
\eta_2(e):=\H(e_0(e\cap\Theta_2))
\]
for all Borel $e\subset \Theta$, we get
$T_{\eta_2}=\bar e_2\wedge {\mathcal L}^2\res Q$.
Now, setting $\eta:=\eta_1+\eta_2$, one
has
$T_\eta = T_{\eta_1}+T_{\eta_2}=(\bar e_1+\bar e_2) \wedge{\mathcal L}^2\res Q$, and
hence, $\MM(T_\eta)=\sqrt{2}$, while
\[
\int_\Theta \MM(\ld\theta\rd)\, d\eta=
\int_{\Theta_1} \MM(\ld\theta\rd)\, d\eta_1 +
\int_{\Theta_2} \MM(\ld\theta\rd)\, d\eta_2 =
2 > \MM(T_\eta).
\]
\end{example}

We now consider a converse statement, i.e.\
when for a
given
normal current $T\in \M_1(E)$, there is a transport $\eta$
satisfying $T=T_\eta$. For this purpose we give the following definition.

\begin{definition}\label{def_SmirnDecompAc1}
We say that a
normal  current $T\in \M_1(E)$ is decomposable in curves, if
there is a transport $\eta$
satisfying $T=T_\eta + C$, where $C\leq T$ is
a cycle of $T$, $T_\eta\leq T$, and the equalities
\begin{equation}\label{eq:claim1a}
\MM(T_\eta)
= \int_\Theta \MM(\ld\theta\rd)\,  d \eta(\theta)=\int_\Theta \ell(\theta)\,  d \eta(\theta),
\end{equation}
and
\begin{equation}
  \label{eq:claim2}
  \eta(1) = (\partial T)^+,\quad \eta(0) = (\partial T)^-
\end{equation}
are valid.
\end{definition}

\begin{remark}\label{rem_claim1a}
  In view of Theorem~\ref{th_Teta1}, the claim~\eqref{eq:claim1a}
is equivalent to a formally weaker one
\[
\MM(T_\eta)
\geq 
\int_\Theta \ell(\theta)\,  d \eta(\theta).
\]
\end{remark}

Note that the property of being decomposable in curves for acyclic currents (i.e.\ with $C=0$) is exactly what is claimed
in theorem~C from~\cite{Smirnov94} for classical Whitney one-dimensional normal acyclic currents in a Euclidean space ($E=\R^n$);
if one decides to be meticulous, one has to mention also that there, instead of claim~\eqref{eq:claim2},
a different (though equivalent) claim
\[
\mu_{\partial T} = \int_{\Theta(E)} (\delta_{\theta(0)}+ \delta_{\theta(1)})\, d\eta(\theta)
\]
is formulated. In Theorem~\ref{th_decompNorm1acycl} we will show this property for all one-dimensional Ambrosio-Kirchheim
normal acyclic currents in any metric space (up to the set-theoretic assumption made in the Introduction), thus generalizing
the mentioned result from~\cite{Smirnov94} to metric currents.
%

We now are able to prove the following statement which is
the principal tool
of this paper.

\begin{proposition}\label{prop_Teta2}
Let $T\in \M_1(E)$ be an
acyclic
normal current
such that there is a sequence of normal currents
$T_\nu\in \M_1(E)$ decomposable in curves
with $T_\nu\weakto T$ weakly in the sense of currents,
$(\partial T_\nu)^\pm\weakto (\partial T)^\pm$ in the narrow sense of measures,
and $\MM(T_\nu)\to \MM(T)$ as $\nu\to \infty$. 
Then $T$ is decomposable in curves, and in particular $T=T_\eta$ for
some transport $\eta$, and
$\eta$-a.e.\ $\theta\in \Theta$ is an arc.
\end{proposition}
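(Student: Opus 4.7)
The plan is to decompose each $T_\nu = T_{\eta_\nu} + C_\nu$ using the hypothesis, pass to a narrow subsequential limit $\eta$ of $\{\eta_\nu\}$ on $\Theta$, and verify that $T = T_\eta$ satisfies Definition~\ref{def_SmirnDecompAc1}. Since $T$ is acyclic and $T_\nu \weakto T$ with $\MM(T_\nu)\to \MM(T)$, Lemma~\ref{lm:compconv} applied to $C_\nu \leq T_\nu$ shows that every subsequential weak limit of $\{C_\nu\}$ is a cycle of $T$, hence zero, and moreover $\MM(C_\nu)\to 0$, so that $T_{\eta_\nu} = T_\nu - C_\nu \weakto T$ weakly with $\MM(T_{\eta_\nu})\to\MM(T)$. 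I would also reduce to the case where $\eta_\nu$-a.e.\ $\theta$ is an arc parametrized at constant speed (replacing each curve by its arc version and absorbing the resulting loop cycles into $C_\nu$), so that $\Lip\,\theta = \ell(\theta) = \MM(\ld\theta\rd)$ and hence $\int_\Theta \ell\, d\eta_\nu = \MM(T_{\eta_\nu})$ is uniformly bounded.

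The core technical step is the narrow compactness of $\{\eta_\nu\}$ on $\Theta$. Uniform boundedness follows from $\eta_\nu(\Theta) = (e_0)_\#\eta_\nu(E) = \|(\partial T_\nu)^-\| \to \|(\partial T)^-\|$. For tightness I would use Proposition~\ref{prop_AscoliArzela_metr1} to characterize compact subsets of $\Theta$ by uniform Lipschitz bounds together with uniform tightness of preimages in $E$. The Lipschitz tail is handled by Markov: $\eta_\nu\{\ell > L\} \leq L^{-1}\MM(T_{\eta_\nu})$. The key observation for tightness on $E$ is that the mass equality $\MM(T_{\eta_\nu}) = \int\MM(\ld\theta\rd)\,d\eta_\nu$ together with~\eqref{eq_MuTless} upgrades to the measure identity $\mu_{T_{\eta_\nu}}(e) = \int \mu_{\ld\theta\rd}(e)\,d\eta_\nu$ for every Borel $e\subset E$. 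Since $\mu_{T_{\eta_\nu}} \to \mu_T$ narrowly (weak convergence of currents with mass convergence yields narrow convergence of mass measures via Portmanteau) and the limit is tight by the Ulam assumption, the quantity $\int \mu_{\ld\theta\rd}(K^c)\,d\eta_\nu$ is uniformly small for compact $K\subset E$ large enough, and a further Markov estimate, restricted to curves with $\ell\geq\delta_0>0$, yields the required uniform tightness in $E$. The short-curve tail $\eta_\nu\{\ell < \delta_0\}$ is bounded by $\sigma_\nu\{(x,y) : d(x,y) < \delta_0\}$, where $\sigma_\nu := (e_0, e_1)_\#\eta_\nu$ has marginals $(\partial T_\nu)^\pm$; narrow tightness of $\{\sigma_\nu\}$ together with the mutual singularity $(\partial T)^+ \wedge (\partial T)^- = 0$ (which forces the limit joint distribution to vanish on the diagonal) makes this tail uniformly small as well.

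Extracting a subsequential narrow limit $\eta$ of $\{\eta_\nu\}$, the continuity of $e_i\colon \Theta \to E$ gives $\eta(i) = (\partial T)^\pm$ by narrow push-forward. For $T = T_\eta$: Lemma~\ref{lm_Fcontinuous} shows that $\theta \mapsto \ld\theta\rd(\omega)$ is continuous on bounded-length sublevels for each $\omega = f\,d\pi \in D^1(E)$, with $|\ld\theta\rd(\omega)| \leq \|f\|_\infty \Lip(\pi)\,\ell(\theta)$; combined with the uniform smallness of the tail $\{\ell > L\}$, this justifies passing to the limit in $T_{\eta_\nu}(\omega) = \int \ld\theta\rd(\omega)\, d\eta_\nu$ to obtain $T(\omega) = T_\eta(\omega)$. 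The mass identity in Definition~\ref{def_SmirnDecompAc1} then follows from Remark~\ref{rem_claim1a} and the Fatou-type inequality $\int \MM(\ld\theta\rd)\, d\eta \leq \liminf_\nu \int \MM(\ld\theta\rd)\, d\eta_\nu = \MM(T)$. Finally, $\eta$-a.e.\ $\theta$ must be an arc: otherwise the loop part of a non-arc $\theta\in\supp\eta$ would contribute a nontrivial cycle subcurrent to the acyclic $T$, so replacing each non-arc by its arc version yields an arc-supported transport representing the same current.

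The main obstacles I anticipate are (a) the arc reduction at the outset, which requires a measurable selection of arc parametrizations together with control of the absorbed loop cycles, and (b) the tightness argument, particularly the small-length regime where the mass-measure identity alone is insufficient and one must invoke the mutual singularity of $(\partial T)^\pm$ through the joint endpoint distribution on $E\times E$.
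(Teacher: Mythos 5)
Your overall strategy matches the paper's: split off the cycles $C_\nu\le T_\nu$ and kill them with Lemma~\ref{lm:compconv} and acyclicity, establish narrow compactness of $\{\eta_\nu\}$ in $\Theta$, pass to the limit in $T_{\eta_\nu}(\omega)=\int\ld\theta\rd(\omega)\,d\eta_\nu$, and recover the arc property by a measurable loop selection. Your tightness argument is also essentially the paper's (Lemma~\ref{lm_narrconv1a}): long curves by Markov, spatial tightness through the measure identity of Lemma~\ref{lm_Trestr} plus uniform tightness of $\mu_{T_{\eta_\nu}}$, and Proposition~\ref{prop_AscoliArzela_metr1}; your treatment of short curves via the joint endpoint law and the mutual singularity $(\partial T)^+\wedge(\partial T)^-=0$ is a workable alternative to the paper's observation that short curves with an endpoint in a fixed compact set converge in $\Theta$ to constant curves.

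The genuine gap is the step ``combined with the uniform smallness of the tail $\{\ell>L\}$, this justifies passing to the limit in $T_{\eta_\nu}(\omega)=\int\ld\theta\rd(\omega)\,d\eta_\nu$.'' It does not. The integrand satisfies only $|\ld\theta\rd(\omega)|\le|\omega|\,\ell(\theta)$, so narrow convergence $\eta_\nu\weakto\eta$ together with $\sup_\nu\eta_\nu(\{\ell>L\})\le C/L$ controls the measure of the tail but not the integral over it: what is actually needed is the uniform integrability
\[
\lim_{L\to\infty}\limsup_\nu\int_{\{\ell>L\}}\ell(\theta)\,d\eta_\nu(\theta)=0,
\]
and Markov gives nothing here. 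Example~\ref{ex_convcycle} is exactly a counterexample to the inference you are making: $\eta_\nu=\nu^{-1}\delta_{\theta_\nu}$ with $\ell(\theta_\nu)\sim 2\pi\nu$ satisfies $\sup_\nu\eta_\nu(\{\ell>L\})=O(1/L)$ and $\eta_\nu\weakto 0$ narrowly, yet $T_{\eta_\nu}$ converges to a nonzero cycle, so $T_{\eta_\nu}(\omega)$ does not converge to $T_{\lim_\nu\eta_\nu}(\omega)=0$. Establishing the displayed uniform integrability is precisely where acyclicity of the limit $T$ must be used a third time (Lemma~\ref{lm_narrconv1c}): if an amount $c>0$ of $\int\ell\,d\eta_\nu$ escaped to curves of length $>\nu$, the currents $S_\nu:=T_{\eta_\nu\res\{\ell>\nu\}}$ would be subcurrents of $T_\nu$ (Lemma~\ref{lm_PS7etasub1}) with $\MM(S_\nu)\ge c$ and with boundaries $\eta_\nu'(1)-\eta_\nu'(0)\weakto 0$ because $\eta_\nu'(\Theta)\le C/\nu$, so by Lemma~\ref{lm:compconv} a subsequential limit would be a nonzero cycle of $T$. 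Your proposal uses acyclicity only to remove the $C_\nu$ and in the final arc step; without this additional use the limit passage is unjustified. A smaller point: the upfront reduction to arc-parametrized $\eta_\nu$ is both delicate (loop erasure of a general Lipschitz curve is not a finite operation) and unnecessary; the paper instead derives $\MM(\ld\theta\rd)=\ell(\theta)$ for a.e.\ $\theta$ from the mass identities and removes only one near-maximal loop per curve via the measurable selection of Lemma~\ref{lm_delcycle1}.
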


\begin{remark}\label{rem_Teta2a1}
If $T\in \M_1(E)$ is an
acyclic
normal current decomposable in curves, then applying the above theorem
for $T_\nu:=T$ we get
that in particular $T=T_\eta$
for
some transport $\eta$ such that
relationships~\eqref{eq:claim1a} and~\eqref{eq:claim2} hold, and
$\eta$-a.e.\ $\theta\in \Theta$ is an arc.

Thus,
a generic (not necessarily acyclic) normal current $T\in \M_1(E)$  is decomposable in curves, if and only if
there is a transport $\eta$
satisfying all the properties of Definition~\ref{def_SmirnDecompAc1}
(i.e.\ $T=T_\eta + C$, where $T_\eta\leq T$ is acyclic and $C\leq T$ is
a cycle of $T$, the
relationships~\eqref{eq:claim1a} and~\eqref{eq:claim2} holding true), with the additional property
that
$\eta$-a.e.\ $\theta\in \Theta$ is an arc.
\end{remark}

\begin{remark}\label{rem_gentight1}
In the proof of the above Proposition~\ref{prop_Teta2} (in particular, in the key auxiliary
instrument, Proposition~\ref{prop_narrconv}) we heavily rely on the fact that the measures
$\mu_{T_\nu}$ as well as $(\partial T_\nu)^\pm$ are uniformly tight. For $\mu_{T_\nu}$ this is true
in view of Lemma~\ref{lm_AKtight1}, while for $(\partial T_\nu)^\pm$ directly from
the Prokhorov theorem
for nonnegative measures
(theorem~8.6.4 from~\cite{Bogachev06}). However, in both arguments one silently admits the set-theoretic assumption
made in the Introduction; without the latter one has to assume that each of the measures
$\mu_{T_\nu}$, $(\partial T_\nu)^\pm$, $\mu_{T}$ and $(\partial T)^\pm$ is tight itself. Under such an assumption
the statement remains true minding Remark~\ref{rem_gentight3} (for uniform tightness of
$\mu_{T_\nu}$)  as well as theorem~8.6.4 from~\cite{Bogachev06} (for uniform tightness of $(\partial T_\nu)^\pm$).
\end{remark}

\begin{proof}
%
We may assume
$\MM(T_\nu)\le C$ (one can take e.g. $C:=\MM(T)+1$).
Decomposability of $T_\nu$ in curves means the existence
for each $T_\nu$ of a transport $\eta_\nu$ such that
\begin{equation}\label{eq_Tom_12}
\begin{aligned}
T_\nu (f\,d\pi)&=T_{\eta_\nu} (f\,d\pi)+ T_\nu'(f\,d\pi), \qquad T_\nu'\leq T_\nu,\qquad \partial T_\nu'=0,\\
T_{\eta_\nu} (f\,d\pi)
  &= \int_\Theta \ld\theta\rd(f\,d\pi)\,  d \eta_\nu(\theta),\\
\MM(T_{\eta_\nu})&=\int_\Theta \MM(\ld\theta\rd)\,
d\eta_\nu(\theta)=\int_\Theta \ell(\theta)\, d\eta_\nu(\theta),\\
\eta_\nu(1) &=(\partial T_{\eta_\nu})^+, \qquad \eta_\nu(0) =(\partial T_{\eta_\nu})^-
\end{aligned}
\end{equation}
for all $f\,d\pi\in D^1(E)$
(in particular, by any of the last two equalities, the total masses $\eta_\nu(\Theta)$ are uniformly bounded).
Since $\MM(T_\nu')\leq\MM(T_\nu)\leq C$, $\partial T_\nu'=0$, and
$\mu_{T_\nu'}\leq \mu_{T_\nu}$ by Remark~\ref{rem_subcurr4},
the latter measures being uniformly tight,
by Lemma~\ref{lm_AKtight1},
 hence so being also the former,
then by compactness theorem~5.2 from~\cite{AmbrKirch00} one has $T_\nu'\weakto T'$ with
$T'\leq T$ and $\partial T'=0$ by Lemma~\ref{lm:compconv}.
Since $T$ is acyclic, then $T'=0$, and hence
$T_{\eta_\nu}\weakto T$.

In view of~\eqref{eq_Tom_12} we have  the estimate
\[
\int_\Theta \ell(\theta)\,d\eta_\nu = \MM(T_{\eta_\nu})\leq C.
\]
We may invoke therefore Proposition~\ref{prop_narrconv} below, obtaining that
up to a subsequence (not relabeled)
$\eta_\nu\weakto\eta$ in the narrow sense of measures
for some finite Borel measure $\eta$, and, moreover,
that one may pass to the limit as $\nu\to \infty$
in both sides of the first
relationship of~\eqref{eq_Tom_12} obtaining therefore
$T(f\,d\pi)=T_\eta(f\,d\pi)$ for each $f\,d\pi\in D^1(E)$, and
hence $T=T_\eta$.
One shows in addition
that~\eqref{eq:claim2}
is valid by passing to the limit  as $\nu\to \infty$
in both sides of the last two equalities from~\eqref{eq_Tom_12}.

Furthermore, note that
\begin{equation}\label{eq_Tom4}
\MM(T_{\eta_\nu}) = \int_\Theta \MM(\ld\theta\rd)\,  d \eta_\nu(\theta)
\end{equation}
by the second relationship of~\eqref{eq_Tom_12}.
Hence, minding that the functional
$\theta\in \Theta\mapsto \MM(\ld\theta\rd)$ is l.s.c., and hence, the integral
in the right-hand side of the above relationship is l.s.c. with respect to
narrow convergence of $\eta_\nu$, by
passing to a limit in both sides of~\eqref{eq_Tom4}
as $\nu\to \infty$,
we deduce
\[
\MM(T)=\lim_\nu \MM(T_{\eta_\nu})
=  \lim_\nu
\int_\Theta \MM(\ld\theta\rd)\,  d \eta_\nu(\theta)
\geq  \int_\Theta \MM(\ld\theta\rd)\,  d \eta(\theta),
\]
which provides~\eqref{eq:claim1a} once one recalls
Remark~\ref{rem_claim1a}.

Consider 
also the functional defined over transports by
\[
\eta\mapsto \int_\Theta \ell(\theta)\, d\eta.
\]
It
is l.s.c. with respect to the narrow convergence of measures (because
the parametric length $\ell(\cdot)$ is l.s.c. in $\Theta$).
Hence, minding that for each $\eta_\nu$ one has 
\[
\MM(T_\nu)=\int_\Theta \ell(\theta)\, d\eta_\nu=\MM(T_\nu),
\]
we get
\[
\int_\Theta \ell(\theta)\, d\eta\leq \MM(T).
\]
Minding that the opposite inequality holds in view of Theorem~\ref{th_Teta1}, we get that in fact the above inequality is the equality and in particular,
we have that $\MM(\ld\theta\rd)=
\ell(\theta)$ for $\eta$-a.e.\ $\theta\in \Theta$.

Let
$f$: $\Theta\to \Theta$ and
$g$: $\Theta\to \Theta$ be given by Lemma~\ref{lm_delcycle1}.
Then, minding
\begin{align*}
\MM(\ld g(\theta)\rd)\leq\ell(g(\theta)), \qquad\qquad
\MM(\ld f(\theta)\rd)\leq\ell(f(\theta)),
\end{align*}
we get
\[
\MM(\ld g(\theta)\rd)+\MM(\ld f(\theta)\rd)\leq\ell(g(\theta))+\ell(f(\theta))=\ell(\theta)=\MM(\ld \theta\rd),
\]
hence $\MM(\ld g(\theta)\rd)+\MM(\ld f(\theta)\rd)=\MM(\ld \theta\rd)$
for $\eta$-a.e.\ $\theta\in \Theta$.
By virtue of this one has that $T_{f_\#\eta}\leq T$ and therefore $T_{f_\#\eta}$ is a cycle of $T=T_\eta$.
Hence, $T_{f_\#\eta}=0$, so that $T_{g_\#\eta}=T_\eta=T$.
 This also means $\ld f(\theta)\rd =0$ for $\eta$-a.e.\
$\theta\in \Theta$.
For such $\theta$ we have thus $\ld g(\theta)\rd = \ld \theta\rd$,
and in particular, the chain of inequalities
\[
\ell(\theta)=\MM(\ld \theta\rd)=\MM(\ld g(\theta)\rd)\leq \ell(g(\theta))\leq \ell(\theta)
\]
is true, which gives
$\ell(g(\theta)) =\ell(\theta)$, hence $\eta$-a.e.\ $\theta \in
\Theta$ is an arc as claimed.
\end{proof}

The statement below is the key technical result used in the proof of Proposition~\ref{prop_Teta2}.

\begin{proposition}\label{prop_narrconv}
Let
$\{\eta_\nu\}$ be a sequence of nonnegative
finite Borel measures over $\Theta$
with uniformly bounded total masses, and denote $T_\nu:= T_{\eta_\nu}$.
Assume that
$T_\nu\weakto T$ weakly in the sense of currents,
$\MM(T_\nu)\to \MM(T)$
as $\nu\to \infty$,
and
\begin{equation}\label{eq_MTnu1}
\MM(T_\nu)= \int_\Theta \ell(\theta)\, d\eta_\nu \leq C <+\infty
\end{equation}
for all $\nu\in \N$, while the current $T$ is acyclic.
 Then
there exists a transport $\eta$ such that
 up to a subsequence (not relabeled),
$\eta_\nu\weakto \eta$ (and in particular,
$\eta_\nu(i)\weakto \eta(i)$, $i=0,1$)
in the narrow sense of measures, while $T=T_\eta$.
\end{proposition}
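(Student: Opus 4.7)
The strategy is to invoke Prokhorov's theorem to extract a narrowly convergent subsequence $\eta_\nu \weakto \eta$ on $\Theta$, pass to the limit in the defining identity $T_{\eta_\nu}(f\,d\pi) = \int_\Theta \ld\theta\rd(f\,d\pi)\,d\eta_\nu(\theta)$, and use the acyclicity of $T$ at the very end to identify $T$ with $T_\eta$.

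First I would exploit the assumed equality $\MM(T_\nu) = \int_\Theta \ell\,d\eta_\nu$ together with the chain $\MM(T_\nu) \le \int_\Theta \MM(\ld\theta\rd)\,d\eta_\nu \le \int_\Theta \ell\,d\eta_\nu$ (from Theorem~\ref{th_Teta1} and $\MM(\ld\theta\rd) \le \ell(\theta)$) to force equality throughout. This yields that $\MM(\ld\theta\rd) = \ell(\theta)$ for $\eta_\nu$-a.e.\ $\theta$, and sharpens \eqref{eq_MuTless} to the measure identity $\mu_{T_\nu}(e) = \int_\Theta \mu_{\ld\theta\rd}(e)\,d\eta_\nu$ valid for every Borel $e \subset E$. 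Reparametrizing each curve at constant speed so that $\mu_{\ld\theta\rd} = \ell(\theta)\,\theta_\# \mathcal L^1$, I would then build uniform tightness of $\{\eta_\nu\}$ in $\Theta$ by combining: (i) Markov applied to $\int \ell\,d\eta_\nu \le C$ to control $\{\ell > N\}$; (ii) the uniform tightness of $\{\mu_{T_\nu}\}$ (Lemma~\ref{lm_AKtight1} together with Remark~\ref{rem_gentight1}), which, via the measure identity above, gives $\int \ell(\theta)\,\mathcal L^1(\theta^{-1}(K^c))\,d\eta_\nu < \varepsilon'$ for a suitably chosen compact $K \subset E$, and hence, by a second Markov, bounds the $\eta_\nu$-measure of $\{\delta \le \ell \le N,\ \mathcal L^1(\theta^{-1}(K^c)) > \varepsilon\}$ by $\varepsilon'/(\delta\varepsilon)$; (iii) the uniform tightness of the endpoint measures $(\partial T_\nu)^\pm$ from Remark~\ref{rem_gentight1} to handle the short-curve region $\{\ell < \delta\}$, where curves are nearly constant and thus localized near their almost coincident endpoints. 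Proposition~\ref{prop_AscoliArzela_metr1} then shows that the relevant subsets of bounded-length curves staying mostly in $K$ are precompact in $\Theta$, and Prokhorov furnishes a subsequence $\eta_\nu \weakto \eta$ narrowly; the $1$-Lipschitz continuity of the evaluation maps $e_i : \Theta \to E$ delivers $\eta_\nu(i) \weakto \eta(i)$.

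To identify the limit, I would use a cofinal sequence of $N$ with $\eta\{\ell = N\} = 0$. The restricted measures $\eta_\nu\res\{\ell \le N\}$ then narrowly converge to $\eta\res\{\ell \le N\}$, and since on this set $\theta \mapsto \ld\theta\rd(f\,d\pi)$ is bounded and continuous by Lemma~\ref{lm_Fcontinuous}, one gets $T_\nu^N := T_{\eta_\nu \res \{\ell \le N\}} \weakto T^N := T_{\eta \res \{\ell \le N\}}$. The measure identity of the first step implies $T_\nu^N \le T_\nu$ via Remark~\ref{rem_subcurr4}; because $\MM(T_\nu) \to \MM(T)$, Lemma~\ref{lm:compconv} then yields $T^N \le T$. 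Sending $N \to \infty$, $\MM(T_\eta - T^N) \le \int_{\ell > N} \ell\,d\eta \to 0$, so $T^N \to T_\eta$ in mass, and a further application of Lemma~\ref{lm:compconv} gives $T_\eta \le T$. Meanwhile $\partial T_{\eta_\nu} = \eta_\nu(1) - \eta_\nu(0) \weakto \eta(1) - \eta(0) = \partial T_\eta$ and $\partial T_{\eta_\nu} \weakto \partial T$, whence $T - T_\eta$ is a cycle of $T$; acyclicity then forces $T = T_\eta$.

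The main obstacle is the uniform tightness step, particularly the control of short curves: these contribute negligibly to $\mu_{T_\nu}$ but may carry substantial $\eta_\nu$-mass, so one must combine the uniform tightness of $\mu_{T_\nu}$ with that of $(\partial T_\nu)^\pm$ to localize near-constant curves near their almost coincident endpoints. Everything else (passage to the limit inside the integral, identification via acyclicity) then becomes a routine application of the compactness lemma for subcurrents.
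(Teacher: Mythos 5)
Your opening steps --- forcing $\MM(\ld\theta\rd)=\ell(\theta)$ for $\eta_\nu$-a.e.\ $\theta$, the disintegration $\mu_{T_\nu}(e)=\int_\Theta\mu_{\ld\theta\rd}(e)\,d\eta_\nu$, and the three-tier tightness argument (long curves by Markov on $\ell$, medium curves leaving compacta via tightness of $\mu_{T_\nu}$, short curves localized by the boundary measures, then Proposition~\ref{prop_AscoliArzela_metr1} and Prokhorov) --- reproduce the paper's Lemma~\ref{lm_narrconv1a} and Corollary~\ref{co_narrconv1b} essentially verbatim and are fine. The gap is in the identification step, namely the claim that $\eta_\nu\res\{\ell\le N\}\weakto\eta\res\{\ell\le N\}$ once $N$ is chosen with $\eta\{\ell=N\}=0$. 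The functional $\ell$ is only \emph{lower} semicontinuous on $\Theta$, so $\{\ell\le N\}$ is closed but typically has empty interior (every $d_\Theta$-neighbourhood of a curve contains curves of arbitrarily large length); it is therefore not an $\eta$-continuity set for any choice of $N$, and Portmanteau only yields $\limsup_\nu\eta_\nu(\{\ell\le N\})\le\eta(\{\ell\le N\})$: mass carried by $\eta_\nu$ on curves of length $>N$ may land, in the narrow limit, on curves of length $<N$. Concretely, let $\theta^{(k)}$ be the unit segment in $\R^2$ followed by $k$ traversals of a circle of circumference $1/k$ attached at its endpoint; then $\ell(\theta^{(k)})=\MM(\ld\theta^{(k)}\rd)=2$ while $\theta^{(k)}\to\sigma$ (the bare segment, $\ell(\sigma)=1$) in $\Theta$, so for $\eta_\nu:=\delta_{\theta^{(\nu)}}\weakto\delta_\sigma$ and any $N\in(1,2)$ one has $\eta_\nu\res\{\ell\le N\}=0$ although $\eta\res\{\ell\le N\}=\delta_\sigma$. (That sequence violates $\MM(T_\nu)\to\MM(T)$, so it does not refute the Proposition; but it shows your stated justification for the restriction convergence is insufficient --- it would have to be extracted from the mass-convergence hypothesis together with acyclicity, which you do not do at that point.) Without $T^N_\nu\weakto T^N$ you cannot apply Lemma~\ref{lm:compconv} to get $T^N\le T$, and the chain leading to $T_\eta\le T$ breaks.

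This is exactly where the paper has to work hardest: Lemma~\ref{lm_narrconv1c} first proves the uniform integrability statement~\eqref{eq_equiint1}, $\limsup_\nu\int_{\{\ell>k\}}\ell\,d\eta_\nu\to0$ as $k\to\infty$, and acyclicity is used \emph{there} --- the escaping contribution $T_{\eta_\nu\res\{\ell>\nu\}}$ would otherwise converge, via Lemmata~\ref{lm_PS7etasub1} and~\ref{lm:compconv}, to a nonzero cycle of $T$ --- not merely in a final bookkeeping step. With~\eqref{eq_equiint1} in hand the paper never needs the restricted measures to converge: it tests $\eta_\nu$ against bounded continuous Tietze extensions of $\theta\mapsto\ld\theta\rd(\omega)$ from $C_k$ and bounds all errors on $C_k^c$, uniformly in $\nu$, by the quantity $\phi(k)$. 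Your endgame ($T_\eta\le T$ together with $\partial T_\eta=\partial T$ makes $T-T_\eta$ a cycle, hence zero) is a perfectly good, slightly different way to finish, but to reach it you must first establish an analogue of~\eqref{eq_equiint1}, which forces acyclicity and the subcurrent compactness machinery into the argument well before the last line.
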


\begin{remark}\label{rem_narrconv1}
If in the statement of Proposition~\ref{prop_narrconv}
we required that all $\eta_\nu$ be concentrated on some compact subset of $C\subset \Theta$ of curves with uniformly bounded lengths,
then the assumption of acyclicity of the limit current $T$ is unnecessary
and the proof is quite immediate. In fact, in this case
one has that, up to a subsequence (not relabeled),
$\eta_\nu \weakto \eta$ as $\nu\to\infty$
in the $*$-weak sense of measures over $C$
for some finite Borel measure $\eta$ over $C$.
Then
one immediately gets
\[
T_\nu(\omega)= \int_\Theta\ld\theta\rd(\omega)\, d\eta_\nu(\theta)
\to \int_\Theta
\ld\theta\rd(\omega)\, d\eta(\theta) =T_\eta(\omega)
\]
as $\nu\to \infty$, since the function
$\theta\in C\mapsto \ld\theta\rd(\omega)$ is continuous by Lemma~\ref{lm_Fcontinuous}.
Hence $T=T_\eta$.
The convergence $\eta_\nu(i)\weakto\eta(i)$, $i=0,1$, as $\nu\to\infty$ follows from
the fact that a push-forward operator
by means of a continuous function is continuous with respect to
$*$-weak convergence of measures.

Thus, the main difficulty in proving Proposition~\ref{prop_narrconv}
is that we cannot say a priori that $\eta_\nu$
are concentrated in some compact subset of $\Theta$. In this case acyclicity of the limit current will be important as we will see in Example~\ref{ex_convcycle} below.
\end{remark}

\begin{proof}
Combine Corollary~\ref{co_narrconv1b} and Lemma~\ref{lm_narrconv1c} below.
\end{proof}

The results below are used in the proof of Proposition~\ref{prop_narrconv}.

\begin{lemma}\label{lm_narrconv1a}
Under the conditions of Proposition~\ref{prop_narrconv} there is an
increasing sequence of compact sets $\bar\Theta_k'\subset \Theta$ such that
$\eta_\nu((\bar\Theta_k')^c)\leq C/2^k$ for some $C>0$ independent of $k$ and $\nu$
and for all $\nu\in \N$. This is true without any assumption on the acyclicity of the limit current $T$.
\end{lemma}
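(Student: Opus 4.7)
The goal is to establish uniform tightness of $\{\eta_\nu\}$ on $(\Theta,d_\Theta)$ with explicit rate $C/2^k$. The plan is to carve out $\bar\Theta_k'$ by two sorts of upper bounds on a curve $\theta$: one on the parametric length $\ell(\theta)$, to be handled by Markov's inequality applied to $\int\ell\,d\eta_\nu\le C$; and one on the mass $\mu_{\ld\theta\rd}(K_{k,j}^c)$ that each curve deposits outside a carefully chosen exhausting family of compacts $K_{k,j}\subset E$, to be handled by Chebyshev's inequality after exploiting the uniform tightness of $\{\mu_{T_\nu}\}$. Compactness of the resulting subsets of $\Theta$ will then follow from the Ascoli--Arzel\`a criterion of Proposition~\ref{prop_AscoliArzela_metr1}.

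Markov yields $\eta_\nu(\{\ell>M_k\})\le C/M_k\le 2^{-k-1}$ upon choosing $M_k:=2C\cdot 2^k$. For the mass-deposit bound, the crucial preliminary observation is that under the hypothesis~\eqref{eq_MTnu1} the inequality~\eqref{eq_MuTless} from Theorem~\ref{th_Teta1} is actually an equality of Borel measures on $E$: the chain
\[
\MM(T_\nu)=\int_\Theta\ell(\theta)\,d\eta_\nu\ge\int_\Theta\MM(\ld\theta\rd)\,d\eta_\nu\ge\MM(T_\nu)
\]
forces equality throughout, and this equality propagates to every Borel $e\subset E$ by the splitting-over-$e$-and-$e^c$ argument of Remark~\ref{rem_subcurr4}, giving $\mu_{T_\nu}(e)=\int_\Theta\mu_{\ld\theta\rd}(e)\,d\eta_\nu(\theta)$. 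Next, under the set-theoretic assumption the limit $\mu_T$ is tight, and the combination $T_\nu\weakto T$, $\MM(T_\nu)\to\MM(T)$ upgrades the weak-$*$ convergence of the mass measures to narrow convergence $\mu_{T_\nu}\weakto\mu_T$, so by Prokhorov the family $\{\mu_{T_\nu}\}$ is uniformly tight. I therefore choose compacts $K_{k,j}\subset E$, increasing in both $k$ and $j$, with $\mu_{T_\nu}(K_{k,j}^c)\le 2^{-2j-k-2}$ uniformly in $\nu$, and Chebyshev applied to $\theta\mapsto\mu_{\ld\theta\rd}(K_{k,j}^c)$ yields $\eta_\nu(\{\mu_{\ld\theta\rd}(K_{k,j}^c)>2^{-j}\})\le 2^{-j-k-2}$, summable in $j$ to $2^{-k-1}$. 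Setting
\[
\bar\Theta_k':=\bigl\{\theta\in\Theta:\ell(\theta)\le M_k\text{ and }\mu_{\ld\theta\rd}(K_{k,j}^c)\le 2^{-j}\text{ for every }j\in\N\bigr\},
\]
the union bound produces $\eta_\nu((\bar\Theta_k')^c)\le 2^{-k}$; monotonicity of the $\bar\Theta_k'$ in $k$ is guaranteed by the monotonicity of $M_k$ and $K_{k,j}$ (or, failing that, by passing to the finite unions $\bigcup_{k'\le k}\bar\Theta_{k'}'$, which remain compact).

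Compactness of $\bar\Theta_k'$ should follow from Proposition~\ref{prop_AscoliArzela_metr1} after reparametrizing each $\theta\in\bar\Theta_k'$ by arc length on $[0,1]$: the Lipschitz constants are then bounded by $M_k$, and for arcs one has $\mathcal{L}^1(\theta^{-1}(K_{k,j}^c))=\mu_{\ld\theta\rd}(K_{k,j}^c)/\ell(\theta)$, so the uniform $L^1$-tightness hypothesis of that proposition holds at rate $2^{-j}/\ell(\theta)$, which does tend to $0$ as $j\to\infty$ on the subfamily where $\ell(\theta)$ is bounded away from zero. The main obstacle I foresee is the treatment of curves of very small parametric length, whose arc-length parametrizations are nearly constant but whose images could in principle lie far from every $K_{k,j}$; I expect to circumvent this by slightly enlarging each $K_{k,j}$ by a margin $\delta_j\to 0$ (so that any short curve meeting $K_{k,j}$ stays inside its neighborhood, which remains compact in the Banach embedding provided by~\cite{Assouad78}) together with an additional $\ell$-weighted Chebyshev bound that absorbs the remaining very-short-curve mass into the $2^{-k}$ budget.
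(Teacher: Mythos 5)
Your bounds via Markov's inequality on $\ell$ and Chebyshev's inequality on $\theta\mapsto\mu_{\ld\theta\rd}(K_{k,j}^c)$, together with the observation that \eqref{eq_MTnu1} upgrades \eqref{eq_MuTless} to the equality $\mu_{T_\nu}(e)=\int_\Theta\mu_{\ld\theta\rd}(e)\,d\eta_\nu(\theta)$, coincide with the first part of the paper's argument. The genuine gap is exactly where you flag it: curves of small parametric length. A curve $\theta$ with $\ell(\theta)\leq 2^{-j}$ satisfies $\mu_{\ld\theta\rd}(K_{k,j}^c)\leq\MM(\ld\theta\rd)\leq\ell(\theta)\leq 2^{-j}$ no matter where its trace lies, so your set $\bar\Theta_k'$ contains, for instance, all constant curves, and is therefore not compact unless $E$ itself is. Neither of your proposed repairs can close this: enlarging $K_{k,j}$ by a margin $\delta_j$ says nothing about short curves lying entirely outside every $K_{k,j}$, and an $\ell$-weighted Chebyshev estimate is powerless precisely because short curves contribute almost nothing to $\int_\Theta\ell\,d\eta_\nu$ (nor to $\mu_{T_\nu}$), so neither quantity constrains their location, while they may carry essentially all of the $\eta_\nu$-mass.

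The paper supplies the missing control from a different source: the uniform tightness of the measures $\mu_{\partial T_\nu}$, i.e.\ of the endpoint distributions $\eta_\nu(i)=(e_i)_\#\eta_\nu$. This yields compact sets $\tilde K_j\subset E$ with $\eta_\nu(\tilde\Theta_j^c)\leq 2^{-j}$ for $\tilde\Theta_j=\{\theta\,:\,\theta(0)\in\tilde K_j\mbox{ or }\theta(1)\in\tilde K_j\}$, and the set $\Theta_j'$ is obtained by intersecting your two conditions with $\tilde\Theta_j$ (and with $\{\MM(\ld\theta\rd)=\ell(\theta)\}$, a set of full $\eta_\nu$-measure that you also need to build into the definition, since the identity $\mathcal{L}^1(\theta^{-1}(K^c))=\mu_{\ld\theta\rd}(K^c)/\ell(\theta)$ for the constant-speed parameterization must hold for \emph{every} element of the set whose precompactness is claimed, not just for arcs). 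Precompactness is then proved by a dichotomy: along a subsequence either $\ell(\theta_\nu)\geq c>0$, where your Ascoli--Arzel\`a argument applies verbatim, or $\ell(\theta_\nu)\to 0$, in which case an endpoint lies in $\tilde K_j$, the whole trace stays within distance $\ell(\theta_\nu)$ of that endpoint, and the curves converge in $\Theta$ to a constant curve located in $\tilde K_j$. Without an endpoint-tightness condition in the definition of $\bar\Theta_k'$, the conclusion cannot be reached from the estimates you use: a unit Dirac mass on a constant curve sitting at a point escaping every compact set gives $T_{\eta_\nu}=0$ and defeats any candidate family built only from length and deposited-mass bounds.
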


\begin{proof}
By 
uniform tightness of $\mu_{T_\nu}$ there is a sequence
of compact sets $K_k\subset E$ such that
$\mu_{T_\nu}(K_k^c) \leq 1/ 4^k$.
Consider the set
\[
\Theta_k:=\{\theta\in \Theta\,:\,
\mu_{\ld\theta\rd} (K_k^c) > 1/2^k\}.
\]
One has then
\begin{align*}
\frac 1 {4^k} \geq \mu_{T_\nu}(K_k^c)& =\int_\Theta \mu_{\ld\theta\rd} (K_k^c) \, d\eta_\nu(\theta)\\
& \geq \int_{\Theta_k} \mu_{\ld\theta\rd} (K_k^c) \, d\eta_\nu(\theta)
> \frac 1 {2^k} \eta_\nu(\Theta_k),
\end{align*}
so that
$\eta_\nu(\Theta_k)\leq 1/2^k$. Letting then
\[
\hat \Theta_j:= \bigcap_{k\geq j}\Theta_k^c,
\]
we get
\begin{equation}\label{eq_Fthetp1}
\eta_\nu(\hat\Theta_j^c)= \eta_\nu\left(\bigcup_{k\geq j}\Theta_k\right)\leq \sum_{k=j}^\infty \eta_\nu(\Theta_k)\leq \frac 1 {2^{j-1}}.
\end{equation}

We also observe that for
\[
\Theta^j:=\{\theta\in\Theta\,:\, \ell(\theta)> 2^j\}
\]
one has
\begin{align*}
C\geq \MM(T_\nu)=
\int_\Theta \ell(\theta)\,d\eta_\nu &\geq
\int_{\Theta^j} \ell(\theta)\,d\eta_\nu > 2^j\eta_\nu(\Theta^j),
\end{align*}
hence $\eta_\nu(\Theta^j) < C/2^j$.
Finally, minding that the measures $\mu_{\partial T_\nu}$
are also uniformly tight, we get the existence,
of a sequence of compact sets $\tilde K_k\subset E$ such that
for each $j\in \N$ and for
\[
\tilde\Theta_j:= \{\theta\in \Theta\,:\, \theta(0)\in \tilde K_j\mbox{ or } \theta(1)\in \tilde K_j\}
\]
one has $\eta_\nu(\tilde\Theta_j^c)\leq 1/2^j$ for all $\nu\in \N$.

Let then
\[
\Theta'_j := (\hat\Theta_j\setminus \Theta^j)\cap \tilde\Theta_j\cap \{\theta\in \Theta\,:\,
\MM(\ld\theta\rd)=\ell(\theta)\}.
\]
Note that
for $\eta_\nu$-a.e.\ $\theta$ one has $\MM(\ld\theta\rd)=\ell(\theta)$
(this is true in view of~\eqref{eq_MTnu1} minding~\eqref{eq_MuTless} and the inequality $\MM(\ld\theta\rd)\leq \ell(\theta)$).
Recalling then~\eqref{eq_Fthetp1},
we arrive at the estimate
\begin{equation}\label{eq_Fthet2}
\begin{aligned}
\eta_\nu((\Theta_j')^c) = \eta_\nu((\hat\Theta_j\setminus \Theta^j)^c\cup \tilde\Theta_j^c) & =
\eta_\nu(\hat\Theta_j^c\cup \Theta^j \cup \tilde\Theta_j^c) \leq \frac C {2^j},
\end{aligned}
\end{equation}
and in particular, all $\eta_\nu$ are concentrated on
\[
\Theta':=\bigcup_j \bar \Theta_j'.
\]

Observe now that each $\Theta_j'$ is a precompact subset of $\Theta$. In fact, every $\theta\in \Theta_j'$ with constant velocity over $[0,1]$ satisfies $\Lip\, \theta\leq 2^j$, while
\begin{align*}
\mu_{\ld\theta\rd}(K_k^c)\leq \int_{\theta^{-1}(K_k^c)}|\dot{\theta}(t)|\, dt &= \ell(\theta)
\mathcal{L}^1(\theta^{-1}(K_k^c)),\\
\mu_{\ld\theta\rd}(K_k)\leq\int_{\theta^{-1}(K_k)}|\dot{\theta}(t)|\, dt &= \ell(\theta)
\mathcal{L}^1(\theta^{-1}(K_k)),\\
\mu_{\ld\theta\rd}(E)=\int_0^1|\dot{\theta}(t)|\, dt &= \ell(\theta),
\end{align*}
hence by summing the above two inequalities and comparing them to the third equality, we have that in fact the equalities hold, and thus in particular,
\[
\mathcal{L}^1(\theta^{-1}(K_k^c))= \mu_{\ld\theta\rd}(K_k^c)/\ell(\theta).
\]
Thus, for a sequence $\theta_\nu\in \Theta_j'$ one has that either
\begin{itemize}
\item there is
a subsequence (not relabeled) such that $\ell(\theta_\nu)\geq c>0$, which
implies
\[
\mathcal{L}^1(\theta^{-1}(K_k^c))\leq\frac{1}{2^k c},
\]
for each $k\geq j$,
while $\Lip\,\theta_\nu\leq \ell(\theta_\nu)\leq 2^j$, so that
this subsequence is compact in $\Theta$ by Proposition~\ref{prop_AscoliArzela_metr1}, or
\item $\ell(\theta_\nu)\to 0$ as $\nu\to \infty$. In this case one has that either $\theta_\nu(0)\in \tilde K_j$ or $\theta_\nu(1)\in \tilde K_j$ for a
     subsequence of $\nu\in \N$. Since both cases are symmetric, we assume $\theta_\nu(0)\in \tilde K_j$. One has up to a subsequence (not relabeled), $\theta_\nu(0)\to x\in \tilde K_j$. Then for every $z_\nu\in \theta_\nu$ due to the estimate
    \[
    d(z_\nu, \theta_\nu(0))\leq \ell(\theta_\nu)
    \]
    we get $z_\nu\to x$, so that $\theta_\nu$ converges in $\Theta$ to a constant curve concentrated on $x\in E$.
\end{itemize}
This concludes the proof of the Lemma.
\end{proof}

\begin{corollary}\label{co_narrconv1b}
With the notation of Lemma~\ref{lm_narrconv1a} under the conditions of Proposition~\ref{prop_narrconv}
all $\eta_\nu$ are concentrated over the set $\Theta':=\cup_j\bar \Theta_j'$. Denoting by $\bar\Theta'$ the
closure of the latter in the space of continuous
functions $C([0,1];E)$ (factorized by parameterization) with the uniform metric, we have that up to a subsequence (not relabeled),
$\eta_\nu \weakto \eta$ and $\eta_\nu(i)\weakto\eta(i)$, $i=0,1$, as $\nu\to\infty$
in the narrow sense of measures
for some finite Borel measure $\eta$ over $\bar\Theta'$ concentrated over $\Theta'$.
This is again true without any assumption on the acyclicity of the limit current $T$.
\end{corollary}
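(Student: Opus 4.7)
The plan is to combine the uniform tightness delivered by Lemma~\ref{lm_narrconv1a} with Prokhorov's theorem. First I would note that, since the compact sets $\bar\Theta_k'\subset\Theta$ are nested increasingly and $\eta_\nu((\bar\Theta_k')^c)\leq C/2^k$ uniformly in $\nu\in\N$, letting $k\to\infty$ for fixed $\nu$ immediately gives $\eta_\nu(\Theta\setminus\Theta')=0$; hence each $\eta_\nu$ is concentrated on $\Theta'=\cup_k\bar\Theta_k'$.

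Next, I would view every $\eta_\nu$ as a finite Borel measure on $\bar\Theta'$, which is a closed subset of the complete metric quotient of $C([0,1];E)$ by parameterization and hence itself complete. The compact sets $\bar\Theta_k'$ remain compact after this enlargement (compactness being intrinsic), so the bounds $\eta_\nu((\bar\Theta_k')^c)\leq C/2^k$ now express uniform tightness of $\{\eta_\nu\}$ in $\bar\Theta'$. Combined with the hypothesis that the total masses $\eta_\nu(\Theta)$ are uniformly bounded, Prokhorov's theorem (e.g.\ theorem~8.6.2 of~\cite{Bogachev06}) yields a subsequence $\eta_\nu\weakto\eta$ narrowly, with $\eta$ a finite Borel measure on $\bar\Theta'$.

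To transfer concentration to the limit I would apply the Portmanteau theorem to the open sets $(\bar\Theta_j')^c\subset\bar\Theta'$, obtaining
\[
\eta((\bar\Theta_j')^c)\leq\liminf_\nu \eta_\nu((\bar\Theta_j')^c)\leq C/2^j.
\]
Since the decreasing family $(\bar\Theta_j')^c\cap\bar\Theta'$ exhausts $\bar\Theta'\setminus\Theta'$ as $j\to\infty$, this forces $\eta(\bar\Theta'\setminus\Theta')=0$, so $\eta$ is concentrated on $\Theta'$. Finally, the narrow convergence $\eta_\nu(i)\weakto\eta(i)$ of the endpoint measures follows at once from continuity of the evaluation $e_i\colon\bar\Theta'\to E$ with respect to the uniform metric, since push-forward by a continuous map preserves narrow convergence.

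The only delicate point I anticipate is the application of Prokhorov in the possibly non-separable space $\bar\Theta'$; this will not be a real obstacle, however, since all $\eta_\nu$ are supported on the $\sigma$-compact, hence separable, set $\Theta'$, so one may replace $\bar\Theta'$ by the (separable) closure of $\Theta'$ at the outset and work in that Polish setting throughout. Note that the argument never invokes acyclicity of $T$, in accord with the last sentence of the statement.
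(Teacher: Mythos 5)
Your proof is correct and follows essentially the same route as the paper's: uniform tightness from Lemma~\ref{lm_narrconv1a} plus uniformly bounded masses give narrow compactness via Prokhorov on the Polish space $\bar\Theta'$, the bound $\eta((\bar\Theta_j')^c)\leq C/2^j$ (via the Portmanteau inequality for the open sets $(\bar\Theta_j')^c$) yields concentration of the limit on $\Theta'$, and the endpoint convergence follows from continuity of push-forward under narrow convergence. Your explicit handling of the separability issue is exactly the paper's observation that $\Theta'$ is $\sigma$-compact, hence $\bar\Theta'$ is Polish.
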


\begin{proof}
Clearly $\bar\Theta'$ defined in the statement being proven is a Polish space (since already $\Theta'$ is $\sigma$-compact, hence separable), and since
the sequence
$\eta_\nu$ is uniformly tight on $\bar\Theta'$ and has uniformly bounded mass,
then, up to a subsequence (not relabeled),
$\eta_\nu \weakto \eta$ as $\nu\to\infty$
in the narrow sense of measures
for some finite Borel measure $\eta$ over $\bar\Theta'$.
Note that in view of Lemma~\ref{lm_narrconv1a} one has
$\eta_\nu((\bar\Theta_j')^c)\leq C/2^j$ for all $\nu\in \N$ (here and below the complement is meant now with respect to $\bar\Theta'$),
hence $\eta((\bar\Theta_j')^c)\leq C/2^j$ and therefore $\eta$ is concentrated on $\Theta'$.
The convergence $\eta_\nu(i)\weakto\eta(i)$, $i=0,1$, as $\nu\to\infty$ again follows from
continuity with respect to
narrow convergence of measures of the push-forward operator
by means of a continuous function.
\end{proof}

\begin{lemma}\label{lm_narrconv1c}
Under the conditions of Proposition~\ref{prop_narrconv},
let $\eta$ be a limit point  in the narrow topology of $\eta_\nu$ pointed out in Corollary~\ref{co_narrconv1b}. Then $T = T_\eta$.
\end{lemma}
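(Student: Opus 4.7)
The plan is to prove $T = T_\eta$ by establishing $\partial T = \partial T_\eta$ and $T_\eta \le T$ separately; acyclicity of $T$ will then force the cycle $T - T_\eta$ to vanish. For the boundary equality, Theorem~\ref{th_Teta1} applied both to each $\eta_\nu$ and to $\eta$ gives $\partial T_\nu = \eta_\nu(1) - \eta_\nu(0)$ and $\partial T_\eta = \eta(1) - \eta(0)$; since $\eta_\nu(i) \weakto \eta(i)$ narrowly by Corollary~\ref{co_narrconv1b}, and $T_\nu \weakto T$ implies $\partial T_\nu \weakto \partial T$, uniqueness of limits yields $\partial T = \partial T_\eta$.

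For the subcurrent inequality I would approximate $T_\eta$ by the truncations $T_{\eta \res \bar\Theta_j'}$ and show each of them is a subcurrent of $T$. Setting $T_\nu^j := T_{\eta_\nu \res \bar\Theta_j'}$, the identity $\MM(\ld\theta\rd) = \ell(\theta)$ valid on $\Theta_j'$, combined with the additive split of $\int \ell\,d\eta_\nu$ over $\bar\Theta_j'$ and its complement, forces equality in the triangle inequality $\MM(T_\nu) \le \MM(T_\nu^j) + \MM(T_\nu - T_\nu^j)$, giving $T_\nu^j \leq T_\nu$. The restricted measures $\eta_\nu \res \bar\Theta_j'$ sit on the compact set $\bar\Theta_j'$ where lengths are uniformly bounded by $2^j$, so after a diagonal extraction $\eta_\nu \res \bar\Theta_j' \weakto \mu^j$ narrowly for each $j$; by Lemma~\ref{lm_Fcontinuous} and the compact-support reasoning of Remark~\ref{rem_narrconv1}, $T_\nu^j \weakto T_{\mu^j}$ weakly as currents, and Lemma~\ref{lm:compconv} (using $\MM(T_\nu) \to \MM(T)$) gives $T_{\mu^j} \leq T$.

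The main obstacle is identifying $\mu^j$ with $\eta \res \bar\Theta_j'$, since narrow convergence on $\bar\Theta'$ does not commute with restriction to arbitrary closed subsets. The quantitative tightness bound $\eta_\nu((\bar\Theta_j')^c) \leq C/2^j$ from Lemma~\ref{lm_narrconv1a} resolves this via portmanteau-type arguments: for a continuous test $g$ on $\bar\Theta_j'$, extending to a bounded continuous cutoff $\tilde g$ on $\bar\Theta'$ supported near $\bar\Theta_j'$ yields $\int \tilde g\,d(\eta_\nu - \eta) \to 0$, and the discrepancies between $\int \tilde g\,d\eta_\nu$ (respectively $\int \tilde g\,d\eta$) and $\int_{\bar\Theta_j'} g\,d\eta_\nu$ (resp.\ $\int_{\bar\Theta_j'} g\,d\eta$) are uniformly $O(2^{-j})$. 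A slight modification of $\bar\Theta_j'$ into a continuity set of $\eta$ (perturbing the dyadic thresholds in the construction of Lemma~\ref{lm_narrconv1a} to avoid the countably many atomic level values, while keeping the same uniform tightness estimates) yields the exact identification $\mu^j = \eta \res \bar\Theta_j'$, hence $T_{\eta \res \bar\Theta_j'} \leq T$ for each $j$.

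To conclude, Fatou applied to the LSC function $\ell$ gives $\int \ell\,d\eta \leq \liminf_\nu \int \ell\,d\eta_\nu = \MM(T) < \infty$; combined with $\eta((\bar\Theta_j')^c) \leq C/2^j \to 0$, dominated convergence yields $T_{\eta \res \bar\Theta_j'} \to T_\eta$ in mass. Passing to the limit in $\MM(T_{\eta \res \bar\Theta_j'}) + \MM(T - T_{\eta \res \bar\Theta_j'}) \leq \MM(T)$ via lower semicontinuity of $\MM$ then gives $T_\eta \leq T$. Together with $\partial(T - T_\eta) = 0$, acyclicity of $T$ forces $T - T_\eta = 0$, i.e.\ $T = T_\eta$, as claimed.
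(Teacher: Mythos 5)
Your argument is correct in its overall architecture, but it follows a genuinely different route from the paper's. The paper proves $T(\omega)=T_\eta(\omega)$ directly for every form $\omega=f\,d\pi$: acyclicity is used \emph{first}, to establish the equi-integrability property $\limsup_\nu\int_{\{\ell>k\}}\ell\,d\eta_\nu\to 0$ (if mass escaped onto curves of diverging length, the corresponding subcurrents $T_{\eta_\nu\res\{\ell>\nu\}}$ would converge to a nontrivial cycle of $T$), and then a Tietze--Urysohn truncation of $\theta\mapsto\ld\theta\rd(\omega)$ lets one pass to the limit under the narrow convergence $\eta_\nu\weakto\eta$ with errors controlled by that equi-integrability. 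You instead prove the two relations $\partial T_\eta=\partial T$ (from $\eta_\nu(i)\weakto\eta(i)$ and $\partial T_\nu\weakto\partial T$) and $T_\eta\le T$ (by truncating $\eta_\nu$ to the compact sets $\bar\Theta_j'$, using Lemma~\ref{lm_PS7etasub1} and Lemma~\ref{lm:compconv} on each truncation, and passing to the limit in $j$), and invoke acyclicity only at the very end to kill the cycle $T-T_\eta$; note that $T_\eta\le T$ is by definition the same inequality as $T-T_\eta\le T$, so this last step is legitimate. What your route buys is a cleaner localization of the role of acyclicity and no need for the Tietze--Urysohn truncation of the functional $\ld\cdot\rd(\omega)$; what it costs is the limit-identification issue you yourself flag.

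On that issue: your proposed repair (perturbing the thresholds so that $\bar\Theta_j'$ becomes an $\eta$-continuity set) is the shakiest part of the write-up, since $\partial\bar\Theta_j'$ is not governed by a single real parameter and $\ell$ is only lower semicontinuous, so boundaries of its sublevel sets need not sit on level sets. But the exact identification $\mu^j=\eta\res\bar\Theta_j'$ is not actually needed. From $\eta_\nu\res\bar\Theta_j'\weakto\mu^j$ and $\eta_\nu\weakto\eta$ one gets directly $\mu^j\le\eta$ (test against nonnegative $g\in C_b$) and $(\eta-\mu^j)(\bar\Theta')\le C/2^j$ (total masses converge under narrow convergence, and $\eta_\nu((\bar\Theta_j')^c)\le C/2^j$). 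Then, for any $M>0$,
\[
\MM(T_\eta-T_{\mu^j})\le\int_{\bar\Theta'}\ell\,d(\eta-\mu^j)\le M\,(\eta-\mu^j)(\bar\Theta')+\int_{\{\ell>M\}}\ell\,d\eta,
\]
and since $\ell\in L^1(\eta)$ by Fatou, this tends to $0$ as $j\to\infty$. This gives $T_{\mu^j}\to T_\eta$ in mass, which is all your final limit passage requires. With this substitution your proof is complete.
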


\begin{proof}
We use the notation of Lemma~\ref{lm_narrconv1a} and Corollary~\ref{co_narrconv1b}.
Let us show first that
\begin{equation}\label{eq_equiint1}
\phi(k):=  \limsup_\nu \int_{C_k^c}\ell(\theta)\, d\eta_\nu(\theta)
\to 0 \mbox{ when } k\to \infty,
\end{equation}
where
$C_k:=\{\theta\in \bar\Theta'\,:\, \ell(\theta)\leq k\}$.
It is here that the assumption on the acyclicity of $T$ enters in play.
To prove~\eqref{eq_equiint1} assume the contrary.
Then there exists a $c>0$ such that for a subsequence
of $\eta_\nu$ (not relabeled) one has
\[
\int_{\{\ell(\theta)>\nu\}}\ell(\theta)\, d\eta_\nu(\theta)\geq c.
\]
Consider then $\eta_\nu':=\eta_\nu\res \{\ell(\theta)>\nu\}$, and
$S_\nu:= T_{\eta_\nu'}$. By Lemma~\ref{lm_PS7etasub1},
each $S_\nu$ is a subcurrent of $T_\nu$, 
with $\partial S_\nu\weakto 0$ (see below), 
and hence
by Lemma~\ref{lm:compconv} one gets that up to a subsequence
(again not relabeled) $S_\nu\weakto S$ weakly in the sense of currents as $\nu\to\infty$,
while $S$ is a subcurrent of $T$ and $\MM(S)\geq c$.
On the other hand,
$\eta_\nu'\weakto 0$ (in fact, even the total masses of $\eta_\nu'$ converge to zero)
because
by~\eqref{eq_MTnu1} one has
\[
\nu\eta'_\nu(\Theta)=
\nu \int_{\{\ell(\theta)>\nu\}} \, d\eta_\nu \leq
\int_{\{\ell(\theta)>\nu\}} \ell(\theta)\, d\eta_\nu\leq
\int_\Theta \ell(\theta)\, d\eta_\nu \leq C <+\infty.
\]
Thus
\[
\partial S_\nu=\eta_\nu'(1)-\eta_\nu'(0)\weakto 0
\]
weakly in the sense of measures as $\nu\to\infty$ (again, in fact also in mass although we do not really need it),
hence $\partial S=0$ and, by
acyclicity of $T$,
one gets
$S=0$, giving a contradiction. Hence, the claim~\eqref{eq_equiint1}
is proven.

Fix now an arbitrary
$\omega:=f\,d\pi\in D^1(E)$ with $f\in L^\infty$, denoting for the sake
of brevity $|\omega|:= \|f\|_{\infty}\Lip\,\pi$.
Mind that
\[
|\ld\theta\rd(\omega)|\leq |\omega| k
\]
whenever $\theta\in C_k$.
For each $k\in \N$
using the classical Tietze-Urysohn extension theorem we can find a
bounded continuous function $f_k\colon \bar\Theta'\to \R$
satisfying
\begin{align*}
f_k(\theta) =
\ld\theta\rd(\omega),&\qquad\qquad  \mbox{ if }\theta\in C_k,\\
|f_k(\theta)|  \leq |\omega| k & \qquad\qquad  \mbox{ for all }\theta\in \bar\Theta'.
\end{align*}
We have now
\begin{equation}\label{eq_thetfk_conv1}
    \int_{\bar\Theta'} f_k(\theta)\, d\eta_\nu(\theta) \to
    \int_{\bar\Theta'} f_k(\theta)\, d\eta(\theta)
\end{equation}
as $\nu\to \infty$.
On the other hand,
\[
\int_{\bar\Theta'} f_k(\theta)\, d\eta_\nu(\theta) - \int_{\bar \Theta'} \ld\theta\rd(\omega)\, d\eta_\nu(\theta)=
\int_{C_k^c} \left(f_k(\theta)-\ld\theta\rd(\omega)\right) \, d\eta_\nu(\theta),
\]
while
\begin{equation}\label{eq_thetfk_conv2}
\begin{aligned}
\left|\int_{C_k^c} \left(f_k(\theta)-\ld\theta\rd(\omega)\right) \, d\eta_\nu(\theta) \right| &\leq
\int_{C_k^c}
\left| f_k(\theta)\right|
 \, d\eta_\nu(\theta) + \int_{C_k^c}
\left| \ld\theta\rd(\omega)\right|
 \, d\eta_\nu(\theta) \\
 &\leq
\int_{C_k^c} |\omega|k\, d\eta_\nu(\theta)
+\int_{C_k^c} |\omega|\ell(\theta)\, d\eta_\nu(\theta)
\\
&\leq 2|\omega| \int_{C_k^c} \ell(\theta)\, d\eta_\nu(\theta)\leq 4|\omega|\phi(k)
\end{aligned}
\end{equation}
for all sufficiently large $\nu\in \N$.
Analogously,
\begin{equation}\label{eq_thetfk_conv3}
\int_{\bar\Theta'} f_k(\theta)\, d\eta(\theta) - \int_{\bar \Theta'} \ld\theta\rd(\omega)\, d\eta(\theta)=
\int_{C_k^c} \left(f_k(\theta)-\ld\theta\rd(\omega)\right) \, d\eta(\theta),
\end{equation}
with
\begin{align*}
\left|\int_{C_k^c} \left(f_k(\theta)-\ld\theta\rd(\omega)\right) \, d\eta(\theta) \right|  &\leq 2|\omega|\phi(k),
\end{align*}
where we used the fact that since the set $C_k$ is closed in $\bar\Theta'$, then by~\eqref{eq_equiint1} one has
\[
\int_{C_k^c}\ell(\theta)\, d\eta(\theta)\leq \phi(k).
\]

Thus, in view of~\eqref{eq_thetfk_conv1} and~\eqref{eq_thetfk_conv2}
we get
\begin{align*}
\int_\Theta f_k(\theta)\, d\eta(\theta)-4 |\omega| \phi(k) &\leq
\liminf_\nu \int_\Theta \ld\theta\rd(\omega)\, d\eta_\nu(\theta)
\leq
\limsup_\nu \int_\Theta \ld\theta\rd(\omega)\, d\eta_\nu(\theta)\\
&\leq
\int_\Theta
f_k(\theta)\, d\eta(\theta)+4|\omega| \phi(k).
\end{align*}
Minding~\eqref{eq_thetfk_conv3}, we arrive at the estimate
\begin{align*}
\int_\Theta \ld\theta\rd(\omega)\, d\eta(\theta)-6 |\omega| \phi(k) &\leq
\liminf_\nu \int_\Theta \ld\theta\rd(\omega)\, d\eta_\nu(\theta)
\leq
\limsup_\nu \int_\Theta \ld\theta\rd(\omega)\, d\eta_\nu(\theta)\\
&\leq
\int_\Theta
\ld\theta\rd(\omega)\, d\eta(\theta)+6|\omega| \phi(k),
\end{align*}
and passing to the limit as $k\to \infty$, we get
\[
\lim_\nu T_\nu(\omega)=\int_\Theta
\ld\theta\rd(\omega)\, d\eta(\theta) =T_\eta(\omega)
\]
as $\nu\to \infty$, which
allows us to conclude that $T=T_\eta$.
\end{proof}

It is worth remarking that the requirement of acyclicity of the
limit current $T$ of the above Proposition~\ref{prop_narrconv}
is essential as shown in the example below.

\begin{example}\label{ex_convcycle}
 Consider the sequence of
 curves in $\R^2$ admitting the parameterization
$\theta_\nu(t):=(1+t/\nu)(\cos(2\pi \nu t),\sin(2\pi \nu t))$,
$t\in[0,1]$, and define $\eta_\nu:=\frac 1 \nu
\delta_{\theta_\nu}$ be the transport concentrated on
$\theta_\nu\in\Theta$ and having total mass $1/\nu$.
Define also $\bar \theta(t):=(\cos(2\pi t),\sin(2\pi t))$ and let
$\eta:=\delta_{\bar{\theta}}$ be the transport concentrated
on $\bar \theta$ with unit total mass.
Clearly $\eta_\nu\weakto 0$ in the narrow sense of measures as
$\nu\to\infty$ (in fact, $\eta_\nu(\Theta)=1/\nu$). On the other hand,
$T_{\eta_\nu}\weakto T_\eta\neq 0$ as $\nu\to\infty$ .
However, this is not in contradiction with the above Proposition~\ref{prop_narrconv}
because clearly $\partial T_\eta=0$, i.e.\ $T_\eta$ is a cycle.
\end{example}

Another lemma used in the proof of Proposition~\ref{prop_Teta2} is provided below.

\begin{lemma}\label{lm_delcycle1}
The following assertions are valid.
\begin{itemize}
\item[(i)] There is a map $f$: $\Theta\to \Theta$
measurable with respect to all transports
such that
$f(\theta)$ is a loop  (i.e.\ a 
closed curve)
contained in
$\theta\in\Theta$ 
with
\[
\ell(f(\theta))\geq 1/2 \sup\left\{\ell(\sigma)\,:\, \sigma \mbox{
is a loop contained in } \theta \right\}.
\]
\item[(ii)] There is a  map $g$: $\Theta\to \Theta$
measurable with respect to all transports
such that
for all $\theta\in \Theta$ one has 
$\theta=g(\theta)\cup f(\theta)$ (as traces),
$\ld \theta\rd = \ld g(\theta)\rd +\ld f(\theta)\rd $, 
\[
\ell(g(\theta))< \ell(\theta),
\]
unless $\theta$ is an arc, and, finally,
$g(\theta)=\theta$, if and only if $\theta$ is an arc.
\end{itemize}
\end{lemma}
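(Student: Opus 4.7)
The plan is to work with canonical constant-speed parameterizations and, for each $\theta$, to single out the longest simple sub-loop by a lexicographic choice, reading off $g(\theta)$ as the complementary curve. Concretely, I would first fix a Borel-measurable selection $\theta \mapsto \bar\theta\colon[0,1]\to E$ of constant-speed parameterizations (so that $|\dot{\bar\theta}|\equiv \ell(\theta)$ a.e.), and identify $\theta$ with $\bar\theta$ henceforth. For each $\theta$ I define
\[
S(\theta) := \{(s,t)\in[0,1]^2 : s\leq t,\ \bar\theta(s)=\bar\theta(t),\ \bar\theta|_{[s,t)}\text{ is injective}\},
\]
the set parameterizing simple closed sub-curves of $\theta$ (where $s=t$ corresponds to a constant curve). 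I would first check that $S(\theta)$ is closed in $[0,1]^2$: if $(s_n,t_n)\in S(\theta)$ converges to $(s,t)$, continuity gives $\bar\theta(s)=\bar\theta(t)$, while any putative interior self-intersection $s\leq u<v<t$ would also lie in $[s_n,t_n)$ for large $n$, contradicting injectivity there. Letting $M(\theta):=\max\{t-s:(s,t)\in S(\theta)\}$ (attained, by compactness), I take $(s_\theta,t_\theta)$ to be the lexicographically smallest pair realising this maximum, and set $f(\theta):=\bar\theta|_{[s_\theta,t_\theta]}$. By construction this is either a constant curve (when $M(\theta)=0$, i.e.\ when $\theta$ is an arc) or a genuine simple closed curve, and its parametric length $M(\theta)\cdot\ell(\theta)$ is equal to (not merely half of) $\sup\{\ell(\sigma):\sigma\text{ a simple closed sub-curve of }\theta\}$, giving (i).

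For part~(ii) I define $g(\theta)$ by concatenating $\bar\theta|_{[0,s_\theta]}$ with $\bar\theta|_{[t_\theta,1]}$ at the common endpoint $\bar\theta(s_\theta)=\bar\theta(t_\theta)$ and reparameterizing to $[0,1]$. The four required properties are then routine: the trace identity $\theta=g(\theta)\cup f(\theta)$ follows from the decomposition $\bar\theta([0,1])=\bar\theta([0,s_\theta])\cup\bar\theta([s_\theta,t_\theta])\cup\bar\theta([t_\theta,1])$; the current identity $\ld\theta\rd=\ld g(\theta)\rd+\ld f(\theta)\rd$ follows by splitting the integral defining $\ld\theta\rd(\omega)$ over these three sub-intervals; the length identity $\ell(g(\theta))=\ell(\theta)-\ell(f(\theta))$ is immediate from the constant-speed parameterization, and yields $\ell(g(\theta))<\ell(\theta)$ exactly when $s_\theta<t_\theta$, i.e.\ when $\theta$ is not an arc; and $g(\theta)=\theta$ iff $f(\theta)$ is a point iff $\theta$ is an arc.

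The main obstacle will be the measurability of the selection $\theta\mapsto(s_\theta,t_\theta)$. The injectivity condition defining $S(\theta)$ is a universal quantifier over pairs $(u,v)$, so the graph of the multifunction $\theta\mapsto S(\theta)$ in $\Theta\times[0,1]^2$ is \emph{a priori} only coanalytic rather than Borel, which prevents a direct appeal to a Borel measurable-selection theorem. However, coanalytic sets (and their projections) are universally measurable, so after completing the Borel $\sigma$-algebra of $\Theta$ with respect to any fixed transport $\eta$, the Kuratowski--Ryll-Nardzewski theorem (or equivalently a direct argument: $s_\theta=\inf\{s:\exists t,\ (s,t)\in S(\theta),\ t-s=M(\theta)\}$, with $M$ universally measurable as a projection of a coanalytic set) delivers a measurable section, so $f$ (and hence $g$) is measurable with respect to every transport, as required. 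A tempting shortcut would be to replace $S(\theta)$ by the Borel set $A(\theta):=\{(s,t):\bar\theta(s)=\bar\theta(t)\}$ of all loops and then extract a simple sub-loop, but the example of a circle traversed $n$ times shows that no uniform fraction of length survives such an extraction, so the simplicity must be built into the selection at the outset.
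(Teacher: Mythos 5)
Your overall strategy (measurably select a longest loop in each curve and cancel it) is the same as the paper's, but the decision to build \emph{simplicity} into the selected loop breaks part~(ii). Consider the curve $\theta$ built from distinct points $p_0,p_1,p_2,\dots\to p_\infty$ and arcs $a_n$ from $p_{n-1}$ to $p_n$, $b_n$ from $p_n$ to $p_{n-1}$, all with pairwise disjoint interiors and $\sum_n(\ell(a_n)+\ell(b_n))<\infty$: traverse $a_1,a_2,a_3,\dots$, reach $p_\infty$, then return along $\dots,b_3,b_2,b_1$. The only non-diagonal pairs with $\bar\theta(s)=\bar\theta(t)$ are $(s_n,t_n)$ with $\bar\theta(s_n)=\bar\theta(t_n)=p_n$, and each interval $[s_n,t_n)$ contains the next pair $(s_{n+1},t_{n+1})$, so $\bar\theta|_{[s_n,t_n)}$ is never injective and your $S(\theta)$ reduces to the diagonal. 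Hence $M(\theta)=0$, $f(\theta)$ is a point and $g(\theta)=\theta$, although $\theta$ is not an arc; worse, since the arcs are disjoint one has $\MM(\ld\theta\rd)=\ell(\theta)$, so no curve $g$ with $\ld g\rd=\ld\theta\rd$ can satisfy $\ell(g)<\ell(\theta)$ --- with a degenerate $f(\theta)$ the conclusion of (ii) is unobtainable, not merely unproved. The paper sidesteps this by taking the supremum over \emph{all} closed subcurves $\theta|_{[s_1,s_2]}$ with $\theta(s_1)=\theta(s_2)$ (this supremum is strictly positive for every non-arc) and selecting a closed, not necessarily simple, subcurve of at least half that length; downstream (in Proposition~\ref{prop_Teta2}) only $\partial\ld f(\theta)\rd=0$ and the strict length drop are used, never simplicity. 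So the parenthetical ``simple'' should be read loosely, and your closing paragraph explicitly rejects exactly the route that works; the worry about a circle traversed $n$ times is moot because one never needs to extract a simple sub-loop at all.

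A second, repairable, soft spot is measurability: projections of coanalytic sets are $\Sigma^1_2$ and are \emph{not} provably universally measurable in ZFC, so ``$M$ universally measurable as a projection of a coanalytic set'' does not stand as written. (It can be fixed: injectivity of $\bar\theta$ on $[s,t)$ is equivalent to the vanishing of the Borel function $(\theta,s,t)\mapsto\max\{(v-u)(t-v):\,s\le u\le v\le t,\ \bar\theta(u)=\bar\theta(v)\}$, so the graph of $S$ is Borel and analytic sets, which are universally measurable, suffice. Also, your closedness argument for $S(\theta)$ omits the boundary case $u=s$ with $s_n\downarrow s$, which requires a nested-interval argument.) But the decisive gap is the first one.
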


\begin{proof}
We construct a map $f$: $\Theta\to \Theta$ satisfying claim~(i) as
follows. For every $\theta\in \Theta$ and $x\in \theta$ we let
$C(\theta,x)$ stand for the set of curves contained in $\theta$
starting and ending at $x$ in the sense that
\begin{align*}
C(\theta,x)=\Big\{ \tilde\theta\in \Theta\colon & \tilde\theta(t)=
\theta((1-t)s_1+ts_2)\\
& \mbox{ for some } 0\leq s_1\leq s_2\leq 1,\,
\theta(s_1)=\theta(s_2)=x \Big\}.
\end{align*}
In case $x\not\in \theta$ we define $C(\theta,x)$ to be a set
consisting just of a single curve $\theta_x$ defined by
$\theta_x(t):=x$ for all $t\in [0,1]$, i.e.\ of a ``constant'' curve
the trace of which reduces to just one point $x$. Note that
$\theta_x\in C(\theta,x)$ for all $x\in E$. Defined in this way,
the multivalued map
\[
(\theta,x)\in \Theta\times\R^n\mapsto C(\theta,x)\subset \Theta
\]
is u.s.c. (as a multivalued map), and hence Borel measurable.
Therefore, recalling that $\ell\colon \Theta\to \R$ is l.s.c. one
gets the Borel measurability of the single-valued map
\[
\lambda:\, \theta\in \Theta\mapsto
\sup_{x\in \R^n} \sup\{\ell(\sigma)\,:\,
\sigma\in C(\theta,x)\}\in \R.
\]
Clearly, $\lambda(\theta)$ gives
is the supremum of the length of the loops
 contained in $\theta$. Finally, we define
\[
F:\, \theta\in \Theta \mapsto \left\{\sigma\in \bigcup_{x\in \theta}
C(\theta,x)\,:\,
\ell(\sigma)
\geq
 \lambda(\theta)
/2
 \right\}\subset \Theta.
\]
By the von Neumann-Aumann measurable selection theorem~\cite{Srivast98}[corollary~5.5.8]
 one can find a selection
$f\colon \Theta\to \Theta$ of the multivalued map $F$ which is
measurable with respect to all transports $\eta$. Clearly,
$f(\theta)$ is as announced in  the statement being proven.

Define now $g\colon \Theta\to \Theta$
as a union of two curvilinear segments,
by setting
\[
g(\theta):= [\theta(0),f(\theta)(0)]\circ
[f(\theta)(1), \theta(1)].
\]
Clearly, $g(\theta)$ is obtained by ``cancelling'' the loop
$f(\theta)$ from $\theta$.
The properties of $g$ announced in claim~(ii) follow immediately
since
$\ell(g(\theta))\leq \ell(\theta)-\lambda(\theta)/2$, while
$g(\theta)=\theta$, if and only if $f(\theta)=\theta_x$ for some
$x\in \theta$, i.e.\ when $\theta$ is an arc.
\end{proof}

Finally, the following two elementary observations have also to be mentioned because used in the proof, as well as because of having some independent interest.

\begin{lemma}\label{lm_PS7etasub1}
Let $\eta$ be a transport satisfying~\eqref{eq:claim1a} for a
normal current $T=T_\eta\in \M_1(E)$, 
and let $\tilde\eta$ be another transport such that $\tilde\eta\leq
\eta$. Then for $\tilde T:= T_{\tilde\eta}$ one has $\tilde T\leq T$
and 
\[\MM(\tilde T) = \int_\Theta \MM(\ld\theta\rd)\,
d\tilde\eta(\theta) = \int_\Theta \ell(\theta)\,
d\tilde\eta(\theta).
\]
\end{lemma}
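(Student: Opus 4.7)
The plan is to reduce everything to additivity of $T_\sigma$ in the transport $\sigma$, combined with the lower bound in Theorem~\ref{th_Teta1} and the equality assumption~\eqref{eq:claim1a}.

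First I would set $\eta' := \eta - \tilde\eta$, which is a nonnegative finite Borel measure on $\Theta$ since $\tilde\eta\le\eta$. From the definition~\eqref{eq_etaT}, the functional $\sigma\mapsto T_\sigma$ is linear in $\sigma$, so $T = T_\eta = T_{\tilde\eta} + T_{\eta'} = \tilde T + T_{\eta'}$, and in particular $T - \tilde T = T_{\eta'}$. Note also that $\int_\Theta \MM(\ld\theta\rd)\,d\tilde\eta$ and $\int_\Theta \MM(\ld\theta\rd)\,d\eta'$ are both finite, since their sum equals $\int_\Theta \MM(\ld\theta\rd)\,d\eta = \MM(T) < +\infty$ by~\eqref{eq:claim1a}.

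Next I would apply Theorem~\ref{th_Teta1} to each of the transports $\tilde\eta$ and $\eta'$ to obtain the mass bounds
\[
\MM(\tilde T) \le \int_\Theta \MM(\ld\theta\rd)\,d\tilde\eta(\theta),\qquad
\MM(T-\tilde T) = \MM(T_{\eta'}) \le \int_\Theta \MM(\ld\theta\rd)\,d\eta'(\theta).
\]
Summing these two inequalities and using~\eqref{eq:claim1a} gives
\[
\MM(\tilde T) + \MM(T-\tilde T) \le \int_\Theta \MM(\ld\theta\rd)\,d\eta(\theta) = \MM(T).
\]
Combined with the always-valid triangle inequality $\MM(T) \le \MM(\tilde T) + \MM(T-\tilde T)$, this forces equality throughout, which is precisely the defining condition $\tilde T \leq T$ of Definition~\ref{def_subcurrent}.

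Finally, since the sum of the two displayed inequalities is actually an equality, each individual inequality must be an equality as well; in particular $\MM(\tilde T) = \int_\Theta \MM(\ld\theta\rd)\,d\tilde\eta(\theta)$, as required. There is no real obstacle here: the whole argument is a direct consequence of linearity of $\sigma\mapsto T_\sigma$, the a priori mass bound of Theorem~\ref{th_Teta1}, and the saturation of that bound along $\eta$ provided by~\eqref{eq:claim1a}; the compact support hypothesis on $T$ is not used in this particular step and presumably appears only to keep the overall setting uniform with the surrounding results.
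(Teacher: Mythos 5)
Your proof is correct and follows essentially the same route as the paper: decompose $\eta = \tilde\eta + \eta'$, use linearity of $\sigma\mapsto T_\sigma$, apply the mass bound of Theorem~\ref{th_Teta1} to both pieces, and conclude by saturation of the triangle inequality via~\eqref{eq:claim1a}. Your side observation that the compact support hypothesis is not actually used in the argument is also consistent with the paper's proof, which likewise never invokes it.
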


\begin{proof}
Let $\eta':=\eta-\tilde\eta$ and $T':=T_{\eta'}$. By Theorem~\ref{th_Teta1} one has
\[
\MM(\tilde T) \leq \int_\Theta \MM(\ld\theta\rd)\,
d\tilde\eta(\theta), \qquad \MM(T') \leq \int_\Theta
\MM(\ld\theta\rd)\, d\eta'(\theta),
\]
and therefore, minding that $T=\tilde T+ T'$, we get
\begin{align*}
\MM(T) & \leq \MM(\tilde T)+ \MM(T')\leq \int_\Theta
\MM(\ld\theta\rd)\, d\tilde\eta(\theta) +\int_\Theta
\MM(\ld\theta\rd)\, d\eta'(\theta)\\
&=\int_\Theta \MM(\ld\theta\rd)\,  d \eta(\theta)=\MM(T),
\end{align*}
which implies that all the above inequalities are actually
equalities and hence shows the first equality of the thesis.
The second one is exactly the same calculation with
$\ell(\theta)$ instead of $\MM(\ld\theta\rd)$.
\end{proof}

\begin{lemma}\label{lm_Trestr}
Let $T\in \M_1(E)$ be a normal current and $\eta$ be such a transport
that $T=T_\eta$ and
$\MM(T)
=  \int_\Theta \MM(\ld\theta\rd)\,  d \eta(\theta)$.
Then
$\mu_T=\mu_{\ld\theta\rd}\otimes \eta$, i.e.\
\begin{equation}\label{eq_Tom6}
\mu_T(e)
=  \int_\Theta \mu_{\ld\theta\rd} (e)\,  d \eta(\theta),
\end{equation}
and, moreover,
\begin{equation}\label{eq_Tom7}
  T\res \phi (f\,d\pi) = \int_\Theta
  \ld\theta\rd\res \phi (f\,d\pi)\,d\eta(\theta),
\end{equation}
for every Borel function $\phi\colon E\to \R$
and every $f\,d\pi\in D^1(E)$.
\end{lemma}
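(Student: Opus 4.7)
The plan is to derive both identities from Theorem~\ref{th_Teta1} combined with the equality hypothesis $\MM(T)=\int_\Theta \MM(\ld\theta\rd)\,d\eta(\theta)$.

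First I would establish~\eqref{eq_Tom6} by a simple sandwich argument. Define the auxiliary Borel measure $\nu$ on $E$ by
\[
\nu(e):=\int_\Theta \mu_{\ld\theta\rd}(e)\,d\eta(\theta),
\]
finite by Fubini and by Theorem~\ref{th_Teta1}. The content of~\eqref{eq_MuTless} is precisely $\mu_T\leq\nu$ as Borel measures, while the hypothesis yields $\mu_T(E)=\MM(T)=\nu(E)$. A nonnegative finite measure dominated by another of the same total mass must coincide with it, hence $\mu_T=\nu$, which is~\eqref{eq_Tom6}.

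For~\eqref{eq_Tom7} I would proceed by approximation, starting from the case $\phi\in\Lip_b(E)$, in which $\phi f\,d\pi$ belongs to $D^1(E)$ and the definition of $T=T_\eta$ in~\eqref{eq_etaT} yields the identity at once. For a general bounded Borel $\phi$, I would invoke the Radon character of $\mu_T$ (granted by the set-theoretic hypothesis of the Introduction) to pick a sequence $\phi_n\in\Lip_b(E)$ with $\|\phi_n\|_\infty\leq\|\phi\|_\infty$ and $\phi_n\to\phi$ in $L^1(\mu_T)$. Using~\eqref{eq_Tom6} and Fubini one has
\[
\int_\Theta\int_E|\phi_n-\phi|\,d\mu_{\ld\theta\rd}\,d\eta(\theta)=\int_E|\phi_n-\phi|\,d\mu_T\to 0,
\]
so, passing to a subsequence (not relabeled), $\phi_n\to\phi$ in $L^1(\mu_{\ld\theta\rd})$ for $\eta$-a.e.\ $\theta\in\Theta$. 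Applying the standard mass estimate $|S(g\,d\pi)|\leq\Lip(\pi)\int|g|\,d\mu_S$ to $S=T$ and to $S=\ld\theta\rd$ then gives $T\res\phi_n(f\,d\pi)\to T\res\phi(f\,d\pi)$ and, for $\eta$-a.e.\ $\theta$, $\ld\theta\rd\res\phi_n(f\,d\pi)\to\ld\theta\rd\res\phi(f\,d\pi)$. The uniform bound $|\ld\theta\rd\res\phi_n(f\,d\pi)|\leq\|\phi\|_\infty\|f\|_\infty\Lip(\pi)\MM(\ld\theta\rd)$ together with the $\eta$-integrability of $\MM(\ld\theta\rd)$ permits the passage to the limit under the $\eta$-integral by Lebesgue dominated convergence, transferring the identity from each $\phi_n$ to $\phi$.

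The delicate point I would need to keep in mind throughout is the implicit use of the extension of $T$ and $\ld\theta\rd$ to bounded Borel functions in the first slot of the form, which underlies the very definition of $T\res\phi(f\,d\pi)$ and the mass estimate above. This extension is a standard consequence of the finite mass of the currents together with the Radon regularity of $\mu_T$ under the set-theoretic assumption of the Introduction, so that $\Lip_b(E)$ is dense in $L^1(\mu_T)$ and the whole approximation scheme runs without obstruction; were this assumption removed, one would have to re-examine both the extension and the $L^1$-density step.
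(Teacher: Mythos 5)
Your proof is correct and follows essentially the same route as the paper: the sandwich argument ($\mu_T\leq\nu$ together with equality of total masses) for~\eqref{eq_Tom6} is exactly the paper's argument, and for~\eqref{eq_Tom7} the paper performs the same computation $T\res\phi(f\,d\pi)=T(f\phi\,d\pi)=\int_\Theta\ld\theta\rd(f\phi\,d\pi)\,d\eta$, simply treating the extension of the identity $T=T_\eta$ to bounded Borel integrands as an ``easy calculation.'' Your explicit Lusin-type approximation of $\phi$ by Lipschitz functions, with the a.e.\ subsequence extraction and dominated convergence in $\eta$, is a careful justification of precisely the step the paper leaves implicit.
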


\begin{proof}
By Theorem~\ref{th_Teta1} one has
\[
\mu_T(e)
\leq  \int_\Theta \mu_{\ld\theta\rd}(e)\,  d \eta(\theta)
\]
for every Borel set $e\subset E$, while according to the assumptions
the latter estimate becomes an equality
for $e:=E$. Thus~\eqref{eq_Tom6} follows.
The relationship~\eqref{eq_Tom7} is just an
easy calculation
\begin{align*}
  T\res \phi (f\,d\pi)  = T(f\phi \,d\pi) &= \int_\Theta
  \ld\theta\rd (f\phi \,d\pi)\,d\eta(\theta)\\
  &=
  \int_\Theta
  \ld\theta\rd\res \phi (f\,d\pi)\,d\eta(\theta),
\end{align*}
which therefore concludes the proof.
\end{proof}

\section{Currents decomposable in curves}\label{sec_currdecomp1}

Our aim is now to prove the following theorem, which is the main result of
the paper.

\begin{theorem}\label{th_decompNorm1acycl}
Let $E$ be a complete metric space.
Then every acyclic normal one-dimensional real current $T$ 
in $E$ is decomposable in curves, so that
in particular, there is a transport $\eta$
satisfying $T=T_\eta$, such that
relationships~\eqref{eq:claim1a} and~\eqref{eq:claim2} hold, and
$\eta$-a.e.\ $\theta\in \Theta$ is an arc.
\end{theorem}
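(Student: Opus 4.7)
The plan is to construct a sequence of normal currents $T_\nu \in \M_1(E)$, each decomposable in curves, satisfying $T_\nu \weakto T$ weakly as currents, $\MM(T_\nu) \to \MM(T)$, and $(\partial T_\nu)^\pm \weakto (\partial T)^\pm$ narrowly. Once such a sequence is in hand, Proposition~\ref{prop_Teta2} immediately delivers a transport $\eta$ with $T = T_\eta$, the mass and boundary identities~\eqref{eq:claim1a}--\eqref{eq:claim2}, and the arc-valued property of $\eta$-a.e.\ $\theta \in \Theta$. The whole problem therefore reduces to producing such an approximating sequence.

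For the construction I would first pass to the separable case by restricting to the closure of $\supp\mu_T \cup \supp\mu_{\partial T}$, which is separable by the tightness of these measures (granted either by the standing set-theoretic hypothesis or by direct assumption, compare Remark~\ref{rem_gentight1}). Then I would exploit the bi-Lipschitz Assouad embedding into $c_0$ already used in the proof of Proposition~\ref{prop_AscoliArzela_metr1} to transfer everything to this Banach space, which enjoys bounded approximation property via its Schauder basis. Composing with finite-rank operators $P_\nu$ that converge to the identity uniformly on compact sets, I would form pushforwards $T_\nu := (P_\nu)_\# T$ living in finite-dimensional subspaces, where a further discretization via a finite weighted directed graph becomes available. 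Elementary network-flow arguments on such a graph yield a curve decomposition of the acyclic part of each discrete current (any spurious cycles introduced by discretization being absorbed via Proposition~\ref{prop_acycl1}), so that each $T_\nu$ is decomposable in the sense of Definition~\ref{def_SmirnDecompAc1}.

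The main obstacle will be securing the mass convergence $\MM(T_\nu) \to \MM(T)$. Both finite-rank projections and graph discretizations tend to contract mass rather than preserve it, and weak convergence provides only the lower-semicontinuity bound $\MM(T) \leq \liminf_\nu \MM(T_\nu)$, which is insufficient to apply Proposition~\ref{prop_Teta2}. The remedy should rely on the subcurrent machinery of Section~3: using Remark~\ref{rem_subcurr4} together with Lemma~\ref{lm:compconv} one can track the error current $T - T_\nu$ as a subcurrent, estimate its mass, and refine the discretization along a diagonal subsequence until the mass discrepancy vanishes while simultaneously preserving the narrow convergence of $(\partial T_\nu)^\pm$. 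I expect this mass-control step to be the most delicate part of the argument, because it must cope with the structural loss inherent in any finite-dimensional projection or graph approximation without destroying the convergence hypotheses that Proposition~\ref{prop_Teta2} requires.
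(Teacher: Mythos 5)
Your overall architecture --- reduce to the separable case, embed into a Banach space with an approximation property, push forward by finite-rank operators, decompose the resulting finite-dimensional currents polyhedrally (your ``network-flow'' argument is exactly the maximal-path decomposition of Lemma~\ref{lm_measconv1}), and pass to the limit via Proposition~\ref{prop_Teta2} --- is the paper's strategy. But there is a concrete gap in the choice of embedding. You propose the Assouad \emph{bi-Lipschitz} embedding $g\colon E\to c_0$. Proposition~\ref{prop_Teta2} and Definition~\ref{def_SmirnDecompAc1} are metric-exact: you need the precise equality $\MM(T_\eta)=\int_\Theta\MM(\ld\theta\rd)\,d\eta$ and the exact mass convergence $\MM(T_\nu)\to\MM(T)$. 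Under a bi-Lipschitz, non-isometric $g$ neither survives the transfer: $\MM(g_\#S)$ and $\MM(S)$, and likewise $\ell(g\circ\theta)$ and $\ell(\theta)$, differ by nonconstant factors, so a decomposition of $g_\#T$ with no mass cancellation does not pull back to one for $T$ --- you only recover inequalities with the bi-Lipschitz constants. This is precisely why the paper uses the \emph{isometric} Kuratowski embedding into $\ell^\infty$ and then must prove (Lemma~\ref{lm_ellinfty_MAP}) that $\ell^\infty$ has the \emph{metric} approximation property, i.e.\ admits norm-one finite-rank projections; that lemma is not optional decoration but the load-bearing step your route omits.

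The norm-one property is also the actual mechanism that resolves the mass-convergence obstacle you flag. Since $\||P_n|\|\le 1$ one has $\mu_{P_{n\#}T}\le P_{n\#}\mu_T$, hence $\MM(P_{n\#}T)\le\MM(T)$; combined with lower semicontinuity of mass under weak convergence this gives $\MM(T_n)\to\MM(T)$ immediately --- projections do \emph{not} ``contract mass'' in a way that needs repairing, they bound it from the right side. Your proposed remedy, tracking $T-T_\nu$ ``as a subcurrent,'' does not work: the projection error has no reason to satisfy the additivity $\MM(T-T_\nu)+\MM(T_\nu)=\MM(T)$ of Definition~\ref{def_subcurrent}, so Remark~\ref{rem_subcurr4} and Lemma~\ref{lm:compconv} do not apply to it. Two further points you gloss over: the polyhedral approximation in finite dimensions must deliver narrow convergence of $(\partial T_\nu)^+$ and $(\partial T_\nu)^-$ \emph{separately} (not merely of $\partial T_\nu$), which requires the atomic boundary-matching construction of Step~2 of Lemma~\ref{lm_Tapprox0}; and after decomposing $\jmath_\#T$ in $\ell^\infty$ one must still verify that $\eta'$-a.e.\ curve actually lies in $\jmath(E)$ (via Lemma~\ref{lm_Trestr}) before pulling the transport back to $E$.
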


\begin{remark}\label{rem_gentight4}
Let us emphasize that the statement of the above theorem is true in \emph{every} complete metric space,
since we assumed in the Introduction that the density character of every metric space is an Ulam number.
Without such an assumption this result still holds in view of Remark~\ref{rem_gentight2} when
$E$ is an arbitrary complete metric space and $\mu_T$ and $\mu_{\partial T}$ are tight measures, hence in particular
for every $T\in \M_1(E)$ once
$E$ is a Polish (i.e.\ complete separable) metric space.
\end{remark}

To this aim we first provide several technical statements.

First we prove a similar decomposition statement for
one-dimensional real {\em polyhedral\/} currents in a finite-dimensional
normed space.

\begin{lemma}\label{lm_measconv1}
Let $E$ be a finite-dimensional normed
space and $T\in \M_1(E)$ be an acyclic polyhedral current
over $E$, i.e.\
$T = \sum_{\nu=1}^N \theta_\nu T_\nu$, where
$\theta_\nu>0$, and
$T_\nu=\ld a_\nu,b_\nu\rd$ are currents associated to oriented segments
which may overlap only at endpoints.
Then there exists a Borel measure $\eta$ over $\Theta$ such that
$T=T_\eta$ and
relationships~\eqref{eq:claim1a} and~\eqref{eq:claim2} hold, while
$\eta$-a.e.\ $\theta\in \Theta$ is an arc.
\end{lemma}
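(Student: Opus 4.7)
The plan is to identify the polyhedral current $T$ with a finite weighted directed multigraph and then perform a classical flow decomposition, exploiting acyclicity to ensure the decomposition only uses simple paths. Concretely, I regard the collection of endpoints $V:=\{a_\nu,b_\nu\}_{\nu=1}^N$ as the vertex set and the oriented segments $T_\nu=\ld a_\nu,b_\nu\rd$ with weights $\theta_\nu>0$ as the edges of a finite directed graph $G$, so that the boundary $\partial T=\sum_\nu \theta_\nu(\delta_{b_\nu}-\delta_{a_\nu})$ is exactly the net divergence at each vertex. Because the segments overlap only at endpoints, masses of polyhedral currents with this support are additive on weights, which makes the subcurrent relation $S\leq T$ correspond precisely to coordinatewise domination of edge weights (with matching orientation).

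The first key step will be to show that acyclicity of $T$ forces $G$ to have no directed cycle. Indeed, suppose $v_0\to v_1\to\cdots\to v_k=v_0$ is a directed cycle traversing edges $T_{\nu_1},\dots,T_{\nu_k}$; then setting $m:=\min_i\theta_{\nu_i}>0$ and $C:=m\sum_i T_{\nu_i}$, one checks from the weight-additivity of mass that $\MM(T-C)+\MM(C)=\MM(T)$, so $C\leq T$, while obviously $\partial C=0$, contradicting acyclicity of $T$.

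Next I will extract curves iteratively. Assuming $T\neq0$, there must exist a source $v$ (a vertex with $\partial T(v)<0$), because the edge weights sum to $\MM(T)>0$ and the total divergence is zero. Starting from $v$ I follow any edge of positive weight; at any intermediate vertex $u$ with $\partial T(u)=0$, the positive-weight in-edges and out-edges balance, so after arriving I can always leave; since $G$ is directed-acyclic, the resulting walk cannot revisit a vertex and therefore eventually terminates at a sink $w$ (with $\partial T(w)>0$), producing an arc $\gamma\in\Theta$. Let $m_\gamma>0$ be the minimum weight along $\gamma$ and replace $T$ by $T':=T-m_\gamma\sum_{\nu\in\gamma}T_\nu$: this is again a polyhedral current of the same type, strictly fewer edges carry positive weight, and by Remark~\ref{rem_subcurr2} and transitivity $T'$ remains acyclic. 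Iterating yields a finite list $(\gamma_i,m_i)_{i=1}^M$, and I define $\eta:=\sum_i m_i\delta_{\gamma_i}$.

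Finally I will verify the four required conclusions. The identity $T=T_\eta$ is immediate since $\sum_{i:\nu\in\gamma_i}m_i=\theta_\nu$ by construction; the mass equality $\MM(T)=\int_\Theta\MM(\ld\theta\rd)\,d\eta=\sum_i m_i\ell(\gamma_i)$ then follows by rearranging the same finite sum, and each $\gamma_i$ is an arc because $G$ is directed-acyclic. For~\eqref{eq:claim2} one notes that each $\gamma_i$ starts at a source and ends at a sink, so $\eta(0)$ is supported on $\{\partial T<0\}$ and $\eta(1)$ on $\{\partial T>0\}$; since these sets are disjoint, $\eta(1)\wedge\eta(0)=0$, and by Theorem~\ref{th_Teta1} applied to $T_\eta=T$ we conclude $\eta(1)=(\partial T)^+$ and $\eta(0)=(\partial T)^-$. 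The only delicate point is the very first step, namely translating the a priori abstract acyclicity hypothesis into the concrete combinatorial statement ``no directed cycle in $G$''; everything downstream is then standard finite flow decomposition.
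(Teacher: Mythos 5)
Your overall strategy --- viewing $T$ as a weighted directed acyclic multigraph and performing a finite path decomposition --- is essentially the paper's, and most steps (acyclicity of $T$ forces the absence of directed cycles, $T=T_\eta$, the mass identity, termination in finitely many steps, preservation of acyclicity of the residuals) are sound. The genuine gap is in the verification of~\eqref{eq:claim2}. You start each walk at a vertex $v$ with $\partial T(\{v\})<0$ and subtract $m_\gamma:=\min_{\nu\in\gamma}\theta_\nu$; but a vertex with negative \emph{net} divergence may still have incoming edges, so nothing prevents $m_\gamma>(\partial T)^-(\{v\})$. In that case $\partial T'(\{v\})=\partial T(\{v\})+m_\gamma>0$, i.e.\ $v$ becomes a sink of the residual current, and a later path of your iteration terminates at $v$; then $\eta(1)$ charges a point of $\{\partial T<0\}$, and your assertion that ``$\eta(0)$ is supported on $\{\partial T<0\}$ and $\eta(1)$ on $\{\partial T>0\}$'' fails, because the sources and sinks you use are those of the successive residuals, not of the original $T$. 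Concretely, take $T=\ld u,v\rd+3\ld v,w\rd$ with the two segments meeting only at $v$: then $v$ is a legitimate source ($\partial T(\{v\})=-2$), and if your first walk starts at $v$ you get $\gamma_1=[v,w]$ with $m_1=3$ and then $\gamma_2=[u,v]$ with $m_2=1$, so $\eta(1)=3\delta_w+\delta_v$ and $\eta(0)=\delta_u+3\delta_v$; these are not mutually singular and do not equal $(\partial T)^+=3\delta_w$ and $(\partial T)^-=\delta_u+2\delta_v$.

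The repair is exactly the device the paper uses: extend each path \emph{maximally backward as well as forward}, so that its initial vertex has no incoming edge at all and its final vertex no outgoing edge. Then $(\partial T)^-(\{v\})$ equals the total outgoing weight at $v$, which is at least $m_\gamma$ (and symmetrically at the terminal vertex), whence $(\partial P)^{\pm}\le(\partial T)^{\pm}$ as measures for the extracted piece $P$; this domination propagates through the induction and yields $\eta(1)\ll(\partial T)^+$ and $\eta(0)\ll(\partial T)^-$, after which the Jordan-decomposition argument you invoke via Theorem~\ref{th_Teta1} closes the proof. (The paper additionally seeds each maximal path with a globally minimal-weight edge, which simultaneously guarantees $P\le T$ and that at least one edge disappears at every step.)
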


\begin{proof}
Let us call \emph{edges} the oriented segments $T_\nu=\ld a_\nu,b_\nu\rd$, $\nu=1,\dots,N$.
We say that an ordered finite collection of edges
$(T_{\nu_1},\ldots,T_{\nu_M})$, where
$T_{\nu_i}:= \ld a_{\nu_i}, b_{\nu_i}\rd$, $i=1,\ldots, M$,
is a \emph{path} in $T$,
if $b_{\nu_i}=a_{\nu_{i+1}}$ for $i=1,\ldots,M-1$. We say that such a path
is \emph{closed}, if also $b_{\nu_M}=a_{\nu_1}$.
Clearly an acyclic $T$ contains no closed paths.
Given a path in $T$, we can extend it
\emph{forward}, if there exists an edge $T_\nu$ of $T$ such that
$a_\nu=b_{\nu_N}$, and \emph{backward}, if there exists
and edge $T_\nu$ such that $b_\nu=a_{\nu_1}$.

Let $\bar \nu$ be such that $\theta_{\bar \nu} = \min\{\theta_1,\dots,\theta_N\}$
and consider the path $(T_{\bar \nu})$ with a single edge.
Then extend this path as much as
possible forward and backward. At each extension step the path cannot become
closed, hence the path is composed by all different edges. Since
there is only a finite number of edges in $T$, this extension process must
finish in a finite number of steps. We obtain in this way a \emph{maximal
  path} containing $T_{\bar \nu}$.
Let $(T_{\nu_1},\ldots,T_{\nu_M})$ be
this maximal path and consider the corresponding current
\[
  P_0 := \theta_{\bar \nu} \sum_{i=1}^M  T_{\nu_i}.
\]
Clearly, $P_0\leq T$ and
$\partial P_0 = \ld b_{\nu_M}\rd - \ld a_{\nu_1}\rd $.
Since the path is maximal, in $T$ there is no edge $T_\nu=\ld a_\nu, b_\nu\rd$
with endpoint $b_\nu=a_{\nu_1}$, and thus $(\partial P_0)^- = \theta_{\bar \nu}\ld a_{\nu_1}\rd$ is a subcurrent
of $(\partial T)^-$.
Analogously $(\partial P_0)^+ = \theta_{\bar \nu} \ld b_{\nu_M}\rd\leq (\partial T)^+$.

To represent $P_0$ as a measure on $\Theta$ we just consider the curve $\sigma_0$
representing the polygonal path $[a_{\nu_1},b_{\nu_1}]\circ\ldots \circ [a_{\nu_M},b_{\nu_M}]$ and the
Dirac measure $\eta_0 := \theta_{\bar \nu}\delta_{\sigma_0}$ to obtain $P_0 = T_{\eta_0}$.
Clearly $\eta_0(1)=(\partial P_0)^+$ and $\eta_0(0)=(\partial P_0)^-$.

The current $T':=T-P_0$ is itself a polyhedral acyclic current with $\partial T' \le \partial T$
(since $\partial P_0 \le \partial T$ as noted above).
Moreover $T'$ can be represented with strictly less edges than $T$ because the edge
$T_{\bar \nu}$ has been removed from $T$. Hence repeating the previous
construction with $T'$ in place of $T$ we find a subcurrent $P_1$
representing a path in $T'$ and such that $P_1 = T_{\eta_1}$
with $\eta_1(1)=(\partial P_1)^+ \le (\partial T)^+$ and $\eta_1(0)=(\partial P_1)^- \le (\partial T)^-$.
A finite number of such steps will eventually exhaust $T$ and yield a
decomposition $T=\sum_{i=0}^k P_i$ such that the corresponding
measure $\eta:= \sum_{i=0}^k \eta_i$ has the required properties.
\end{proof}

\begin{lemma}\label{lm_decompFIN0}
Let $E$ be a finite-dimensional normed space.
Then every acyclic normal  current $T\in \M_1(E)$
with bounded support
in $E$ is decomposable in curves, so that
in particular, there is a transport $\eta$
satisfying $T=T_\eta$, while
relationships~\eqref{eq:claim1a} and~\eqref{eq:claim2} hold, and
$\eta$-a.e.\ $\theta\in \Theta$ is an arc.
\end{lemma}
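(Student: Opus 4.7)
The plan is to reduce to the polyhedral case already handled by Lemma~\ref{lm_measconv1} by approximating $T$ with acyclic polyhedral currents satisfying the hypotheses of Proposition~\ref{prop_Teta2}.

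First, I would produce a sequence of polyhedral one-dimensional currents $T_\nu \in \M_1(E)$ supported in a common compact subset of $E$ such that $T_\nu \weakto T$ weakly as currents, $\MM(T_\nu) \to \MM(T)$, and $(\partial T_\nu)^\pm \weakto (\partial T)^\pm$ in the narrow sense of measures as $\nu \to \infty$. Using the bi-Lipschitz identification of $E$ with $\R^n$, such approximations can be obtained via the Federer--Fleming deformation along a refining sequence of cubic grids of mesh size tending to zero, combined with a preliminary smoothing of $T$ so as to replace the standard mass \emph{bound} of the deformation theorem by an actual mass \emph{convergence}; the narrow convergence of $(\partial T_\nu)^\pm$ to $(\partial T)^\pm$ is a consequence of the explicit discretization of $\partial T$ on the grid vertices.

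Next, each polyhedral $T_\nu$ may fail to be acyclic. I would write $T_\nu = \tilde T_\nu + C_\nu$ with $C_\nu \leq T_\nu$ a polyhedral cycle and $\tilde T_\nu \leq T_\nu$ acyclic and polyhedral; since $T_\nu$ lives on a finite graph, this is essentially a combinatorial version of Proposition~\ref{prop_acycl1}, and from $\partial C_\nu = 0$ we get $\partial \tilde T_\nu = \partial T_\nu$, so the narrow convergence of the boundaries is inherited by $\tilde T_\nu$. I would then verify that $\tilde T_\nu \weakto T$ and $\MM(\tilde T_\nu) \to \MM(T)$ (hence $\MM(C_\nu) \to 0$). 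Indeed, by the compactness theorem for normal currents with uniformly bounded mass and support, up to a subsequence $\tilde T_\nu \weakto S$ for some $S$; applying Lemma~\ref{lm:compconv} to the pair $\tilde T_\nu \leq T_\nu$ yields $S \leq T$ and $\MM(\tilde T_\nu)\to \MM(S)$. Passing to the limit in $C_\nu = T_\nu - \tilde T_\nu$ gives $C_\nu \weakto T - S$, whence $\partial(T - S) = 0$; since $S \leq T$ directly implies $T - S \leq T$, the current $T-S$ is a cycle of $T$, and acyclicity of $T$ forces $S = T$.

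With these properties in hand, Lemma~\ref{lm_measconv1} guarantees that every acyclic polyhedral $\tilde T_\nu$ is decomposable in curves, so all the hypotheses of Proposition~\ref{prop_Teta2} are met. Applying that proposition to the sequence $\{\tilde T_\nu\}$ yields the existence of a transport $\eta$ with $T = T_\eta$, satisfying~\eqref{eq:claim1a} and~\eqref{eq:claim2}, and with $\eta$-a.e.\ $\theta \in \Theta$ an arc, which is the desired conclusion.

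The main obstacle I expect is the construction in the first step: producing a polyhedral approximation enjoying \emph{simultaneously} weak convergence, mass convergence, and narrow convergence of the positive and negative parts of the boundary. The bare Federer--Fleming deformation yields only an inequality of the form $\MM(T_\nu) \leq C(\MM(T) + h_\nu \MM(\partial T))$ with $h_\nu$ the grid mesh, rather than a convergence; upgrading this to equality in the limit typically requires either a preliminary mollification of $T$, or a direct construction starting from the polar representation $T = \vec\tau\, \mu_T$ in which both the unit vector field $\vec\tau$ and the mass measure $\mu_T$ are discretized consistently along edges of the refining grid.
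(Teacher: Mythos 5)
Your proposal follows essentially the same route as the paper: the published proof is literally the combination of Lemma~\ref{lm_Tapprox0} (which packages exactly the polyhedral approximation you describe --- weak convergence, mass convergence, narrow convergence of $(\partial T_\nu)^\pm$, and the acyclicity correction --- and proves it by mollification followed by discretization of the representation $T(f\,d\pi)=\int f(\nabla\pi,l)\,dx$, i.e.\ the second of the two remedies you propose for the deformation-theorem mass bound) with Lemma~\ref{lm_measconv1} and Proposition~\ref{prop_Teta2}. Your cycle-removal argument coincides with Step~3 of the paper's proof of Lemma~\ref{lm_Tapprox0}, the only difference being that the paper applies the general Proposition~\ref{prop_acycl1} and then uses Lemma~\ref{lm_polysub} to verify that the acyclic part remains polyhedral, whereas you perform the cancellation combinatorially on the finite graph.
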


\begin{proof}
Combine Lemmata~\ref{lm_measconv1} and~\ref{lm_Tapprox0} with Proposition~\ref{prop_Teta2}.
\end{proof}

\begin{lemma}\label{lm_decompMAP1}
Let $E$ be a Banach space with metric approximation property.
Then every acyclic normal  current $T\in \M_1(E)$ 
in $E$ is decomposable in curves, so that
in particular, there is a transport $\eta$
satisfying $T=T_\eta$, while
relationships~\eqref{eq:claim1a} and~\eqref{eq:claim2} hold, and
$\eta$-a.e.\ $\theta\in \Theta$ is an arc.
\end{lemma}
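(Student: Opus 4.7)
The plan is to reduce to the finite-dimensional case handled in Lemma~\ref{lm_decompFIN0} by pushing $T$ forward under the finite-rank contractions supplied by the metric approximation property, and then to invoke Proposition~\ref{prop_Teta2} on the resulting approximating sequence.

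First I would fix a sequence of linear operators $L_n\colon E\to E$ of finite rank with $\|L_n\|\le 1$ and with $L_n x\to x$ uniformly on compact subsets of $E$, which is guaranteed (after localizing on a compact set carrying all but $\varepsilon$ of the mass of the relevant Borel measures) by the metric approximation property. Denote $E_n:=L_n(E)$, a finite-dimensional subspace, and set $T_n:=(L_n)_\# T$. Since $L_n$ is linear and $1$-Lipschitz, each $T_n$ is a normal one-dimensional current whose mass measure is concentrated on $E_n$, with $\MM(T_n)\le\MM(T)$, $\partial T_n=(L_n)_\#\partial T$, and $\MM(\partial T_n)\le\MM(\partial T)$; thus $T_n$ may be viewed as a normal current in the finite-dimensional normed space $E_n$.

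Next, combining tightness of $\mu_T$ and of $(\partial T)^{\pm}$ (on a single compact $K\subset E$ carrying all but $\varepsilon$ of the respective total masses) with the uniform convergence $L_n\to\mathrm{Id}$ on $K$, one verifies $T_n\weakto T$ weakly in the sense of currents, $(\partial T_n)^{\pm}\weakto(\partial T)^{\pm}$ in the narrow sense of measures, and $\MM(T_n)\to\MM(T)$. Since $T_n$ need not be acyclic, I would then apply Proposition~\ref{prop_acycl1} to write $T_n=S_n+C_n$ with $S_n\le T_n$ acyclic and $C_n$ a cycle. Lemma~\ref{lm:compconv} yields, up to a subsequence, $S_n\weakto S$ and $C_n\weakto C$ with $S+C=T$, $\partial C=0$ and $C\le T$; acyclicity of $T$ forces $C=0$, so $S=T$, $\MM(S_n)\to\MM(T)$ and $\MM(C_n)\to 0$. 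At this stage each $S_n$ is an acyclic normal one-dimensional current in the finite-dimensional space $E_n$, and once its decomposability in curves is established, Proposition~\ref{prop_Teta2} applied to the sequence $\{S_n\}$ will produce a transport $\eta$ with $T=T_\eta$ satisfying~\eqref{eq:claim1a},~\eqref{eq:claim2} and concentration on arcs.

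The hard part will be the bounded-support hypothesis in Lemma~\ref{lm_decompFIN0}: the support of $S_n$ in $E_n$ is in general only $\sigma$-compact, not bounded. I would address this via a further diagonal approximation, replacing $S_n$ by $S_n\res\chi_R$ for a suitable $1$-Lipschitz cutoff $\chi_R$ equal to $1$ on a large ball of $E_n$, removing the resulting (small) cyclic part via Proposition~\ref{prop_acycl1}, and then diagonalizing over $(n,R)$ so that the approximating currents, each decomposable in curves by Lemma~\ref{lm_decompFIN0}, continue to satisfy the weak convergence, boundary convergence, and mass convergence required by Proposition~\ref{prop_Teta2}. The remainder of the argument is a routine verification of tightness and mass convergences afforded by Lemma~\ref{lm_AKtight1} and lower semicontinuity of mass.
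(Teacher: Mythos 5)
Your overall strategy coincides with the paper's: push $T$ forward under norm-one finite-rank operators given by the metric approximation property, decompose the finite-dimensional images via Lemma~\ref{lm_decompFIN0} (after removing cycles by Proposition~\ref{prop_acycl1}), and pass to the limit with Proposition~\ref{prop_Teta2}. The genuine gap is in the step you dismiss as a routine verification, namely that $T_n:=(L_n)_\#T\weakto T$ and $\MM(T_n)\to\MM(T)$ when $\supp T$ is unbounded. Testing against $f\,d\pi$ and using proposition~5.1 of \cite{AmbrKirch00}, the error is controlled by
\[
\int_E|f\circ L_n|\,|\pi\circ L_n-\pi|\,d\mu_{\partial T}+\Lip(f)\int_E|\pi\circ L_n-\pi|\,d\mu_T
\]
plus a harmless term involving only $f$. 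On a compact $K$ carrying most of the mass this tends to zero, but on $K^c$ the only available pointwise bound is $|\pi\circ L_n-\pi|(x)\le\Lip(\pi)\|L_nx-x\|\le 2\Lip(\pi)\|x\|$, and $\|x\|$ need not be $(\mu_T+\mu_{\partial T})$-integrable for a normal current with unbounded support; tightness of $\mu_T$ and $(\partial T)^\pm$ alone does not make $\int_{K^c}\|x\|\,d(\mu_T+\mu_{\partial T})$ small. So the convergence you assert is not justified as stated. (The narrow convergence $(L_n)_\#(\partial T)^\pm\weakto(\partial T)^\pm$ is unproblematic because there the test functions are bounded; note however that $(\partial T_n)^\pm$ equals $(L_n)_\#(\partial T)^\pm$ only after subtracting the common mass $(L_n)_\#(\partial T)^+\wedge(L_n)_\#(\partial T)^-$, which must be shown to vanish in the limit.)

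The repair is exactly your cutoff idea, but deployed at the start in $E$ rather than inside $E_n$: first replace $T$ by $T\res g_R$ for a $1$-Lipschitz $g_R$ with bounded support, $0\le g_R\le1$, $g_R=1$ on $B_R(0)$. Then $T\res g_R\le T$, hence it is automatically acyclic by Remark~\ref{rem_subcurr2}, it has bounded support, and it converges to $T$ in mass and boundary mass as $R\to\infty$. For a bounded-support current the dominating function $2\|x\|$ is bounded on the support and the push-forward convergence closes; moreover $(L_n)_\#(T\res g_R)$ then has bounded support in $E_n$, so the hypothesis of Lemma~\ref{lm_decompFIN0} that you single out as ``the hard part'' is satisfied for free, and no diagonalization over $(n,R)$ inside $E_n$ is needed: one applies Proposition~\ref{prop_Teta2} once to conclude that each $T\res g_R$ is decomposable, and a second time to the sequence $\{T\res g_R\}_R$ converging to $T$. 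This is precisely the order of operations in the paper's proof. Your intermediate step of splitting $T_n=S_n+C_n$ and showing $C_n\weakto0$ is correct but can be left to Proposition~\ref{prop_Teta2}, whose proof already absorbs the cyclic parts of the approximating currents.
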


\begin{remark}\label{rem_gentight2}
The Lemma~\ref{lm_decompMAP1} is proven
under the set-theoretic assumption made in the Introduction. Without this assumption one has to
assume
that the measures $\mu_T$ and $\mu_{\partial T}$ are tight.
Then the statement of the Lemma is still true with the following argument added to the proof.
In fact, in the notation of the proof, one has $P_{n\#}\mu_{T}\weakto \mu_T$ in the narrow sense of measures
when $n\to \infty$, while the measures $P_{n\#}\mu_T$ are tight (in fact, they are concentrated over
the $\sigma$-compact set $P_n(\cup_\nu K_\nu)= \cup_\nu P_n(K_\nu)$),
so that in particular, the measures $P_{n\#}\mu_{T}$ are uniformly tight
by theorem~8.6.4 from~\cite{Bogachev06}. But, minding $\| | P_n | \|\leq 1$, we have
$\mu_{T_n}\leq P_{n\#}\mu_{T}$, which means that the measures $\mu_{T_n}$ are also uniformly tight.
Analogously, we have that the measures $\mu_{\partial T_n}=(\partial T_n)^+ + (\partial T_n)^-$ are also uniformly tight,
and hence so are the measures $(\partial T_n)^\pm$. The Proposition~\ref{prop_Teta2} in the proof may then be
invoked minding Remark~\ref{rem_gentight1}.
\end{remark}

\begin{proof}
Let
$\{K_\nu\}$ be an increasing sequence of compact subsets of $E$ such that
$\mu_T$ and $\mu_{\partial T}$ are concentrated on $\cup_\nu K_\nu$,
and let $P_\nu$ be a finite rank projection of norm one such that $\|P_\nu x-x\|\leq 1/\nu$ for all $x\in K_\nu$. Thus $P_\nu x\to x$ as $\nu\to \infty$ for all $x\in \cup_\nu K_\nu$. 

Consider first the case when $\supp T$ is bounded.
Let $T_n:= P_{n\#} T$. Clearly, $T_n\weakto T$ in the weak sense of currents.
In fact, for every $f\,d\pi\in D^1(E)$ we have
\begin{align*}
    |T(f\circ P_n\, d\pi\circ P_n) & -T(f\,d\pi)|   \leq |T(f\circ P_n\, d\pi\circ P_n)-T(f\circ P_n\,d\pi)|  +\\
    &\qquad |T(f\circ P_n\, d\pi)-T(f\,d\pi)| \\
    &\leq \int_E |f\circ P_n|\cdot |\pi\circ P_n- \pi|\, d\mu_{\partial T} + \Lip\, f\int_E |\pi\circ P_n- \pi|\, d\mu_T +\\
    &\qquad
    |T(f\circ P_n\, d\pi)-T(f\,d\pi)| \qquad \mbox{ by proposition~5.1 of~\cite{AmbrKirch00}}\\
    &
    \leq (\|f\|_\infty \Lip\, \pi +\Lip\, f \Lip\, \pi) \int_E  \|P_n x-  x\|\, d(\mu_{\partial T} +\mu_T) +\\
    &\qquad
    |T(f\circ P_n\, d\pi)-T(f\,d\pi)|,
\end{align*}
all the terms in the right-hand side tending to zero as $n\to \infty$ by the choice of $P_n$ (the first one by Lebesgue
theorem, recalling that $\|P_n x-  x\|\leq 2\|x\|$ and the support of $T$, and hence of $\partial T$, is bounded, while the last term
because $f(P_n(x))\to f(x)$ for $\mu_T$-a.e. $x\in E$).
Further,
we have
\begin{align*}
\MM(T)\leq & \liminf_n \MM(T_n) \leq \limsup_n \MM(T_n)\leq \MM(T),
\end{align*}
since $\MM(T_n)\leq \MM(T)$,
and therefore $\MM(T_n)\to \MM(T)$ as $n\to \infty$.
Finally,
\[
(\partial T_n)^\pm=P_{n\#}(\partial T)^\pm-
P_{n\#}(\partial T)^+ \wedge P_{n\#}(\partial T)^-,
\]
and thus minding that $P_{n\#}(\partial T)^\pm\weakto (\partial T)^\pm$, we get $(\partial T_n)^\pm\weakto (\partial T)^\pm$
as $n\to \infty$ in the narrow sense of measures. It suffices then to
recall that $T_n$ are decomposable in curves (as currents over a finite-dimensional space by Lemma~\ref{lm_decompFIN0}) and apply Proposition~\ref{prop_Teta2}.

For the general case of a current $T$ with possibly unbounded support, we approximate
$T$ by a sequence $\{T_\nu\}\subset \M_1(E)$, such that each $T_\nu$ has bounded support and
$\MM(T_\nu-T)+\MM(\partial T_\nu -\partial T)\to 0$ as $\nu\to \infty$
(for this purpose just take $T_\nu := T\res g_\nu$ for a $g_\nu\in \Lip_1(E)$ with bounded support having
$0\leq g_\nu \leq 1$ and
$g_\nu=1$ on $B_\nu (0)$). Now $T_\nu$ is decomposable in curves as just proven, while the whole sequence
$\{T_\nu\}$ satisfies all the conditions of Proposition~\ref{prop_Teta2} (the only thing to verify is
$(\partial T_\nu)^\pm\weakto (\partial T)^\pm$ in the narrow sense of measures, which is true in view
of the corollary~8.4.8 from~\cite{Bogachev06}), and invoking the latter we conclude the proof.
\end{proof}

The following lemma is probably a folkloric fact which is however not easily found in the literature.

\begin{lemma}\label{lm_ellinfty_MAP}
$\ell^\infty$ has the metric approximation property.
\end{lemma}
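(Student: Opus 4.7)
The strategy is to show that for every compact $K \subset \ell^\infty$ and every $\varepsilon > 0$ there exists a finite-rank operator $T \colon \ell^\infty \to \ell^\infty$ with $\|T\| \leq 1$ and $\sup_{x \in K} \|Tx - x\|_\infty \leq \varepsilon$; applying this with $\varepsilon = 1/n$ then produces the required sequence $\{T_n\}$. The naive guess, namely truncating to the first $n$ coordinates, fails in $\ell^\infty$ because tails of generic elements need not vanish. The partition of $\mathbb{N}$ underlying the projection must therefore be adapted to $K$ itself.

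Concretely, I would fix a finite $\varepsilon/3$-net $\{x_1,\dots,x_m\}\subset K$, which exists by compactness, and consider the bounded map $\phi\colon \mathbb{N}\to\mathbb{R}^m$, $\phi(i):=(x_1(i),\dots,x_m(i))$. Its range lies in some cube $[-M,M]^m$, which can be covered by finitely many disjoint Borel pieces $Q_1,\dots,Q_N$ of $\ell^\infty$-diameter $\leq \varepsilon/3$. Setting $A_j:=\phi^{-1}(Q_j)$ gives a finite Borel partition of $\mathbb{N}$, and choosing a representative $i_j\in A_j$ in each non-empty class I define
\[
(Tx)_i := x(i_j) \qquad \text{whenever } i\in A_j.
\]
This operator is linear, has rank at most $N$, and satisfies $\|Tx\|_\infty=\max_j|x(i_j)|\leq\|x\|_\infty$, i.e.\ $\|T\|\leq 1$.

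To control the error, observe that for each $x_k$ in the net and each $i\in A_j$ the points $\phi(i)$ and $\phi(i_j)$ both lie in $Q_j$, so $|x_k(i)-x_k(i_j)|\leq\varepsilon/3$, whence $\|Tx_k-x_k\|_\infty\leq\varepsilon/3$. For an arbitrary $x\in K$, choosing $x_k$ with $\|x-x_k\|_\infty\leq\varepsilon/3$ and combining with $\|T\|\leq 1$ gives $\|Tx-x\|_\infty\leq\varepsilon$ via the triangle inequality. No real technical obstacle is expected here; the only conceptual point is the realisation that in $\ell^\infty$ the approximating partition of the index set has to be built from values of the net functions (``two indices are equivalent when all $x_k$'s take nearly the same value there''), and that the ``pick one coordinate per class'' construction automatically yields the sharp norm bound $\|T\|\leq 1$, giving \emph{metric} and not merely bounded approximation.
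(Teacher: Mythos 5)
Your proof is correct and follows essentially the same route as the paper: both build a norm-one finite-rank operator from a finite partition of $\N$ on which every function of a finite $\varepsilon$-net of $K$ is nearly constant, assign to each class the value at one representative index, and then pass from the net to all of $K$ using $\|T\|\le 1$ and the triangle inequality. The only cosmetic difference is that you produce the partition in one step as preimages of a grid on $[-M,M]^m$, whereas the paper partitions the level sets of each net element separately and takes a common refinement.
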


\begin{proof}
One has to show the existence for every $\varepsilon>0$ and every finite
set $X\subset \ell^\infty$
of a finite-rank projection $T$ with $| \| T \| | \leq 1$ such that $\|T x-x\| <\varepsilon$
for all $x\in X$. In fact, then for every compact $K\subset \ell^\infty$
choosing a finite $\varepsilon$-net $X\subset K$, we get
for all $x\in K$, choosing $y\in X$ so that $\|x-y\| \leq \varepsilon$, the estimate
\begin{align*}
\| Tx-x\| & \leq \| Tx-T y \| + \|T y-y\| + \|y-x\| \leq 2\|x-y\| + \|T y-y\| \leq 3\varepsilon.
\end{align*}
We now construct a net of finite rank projections of norm one as follows.
Let $\Lambda$ be the directed set of all finite partitions of $\N$ ordered by refinement. For every partition $P\in \Lambda$,
$P= \{N_i\}_{i=1}^k$, $N_i\subset \N$ and all $N_i$ pairwise disjoint,
we define the finite rank projection $T=T_P$ by setting
$(Tx)_j:=x_{i_1}$ for all $j\in N_i$, where $i_1$ stands for the first
(i.e.\ lowest) index in $N_i$.
Clearly, for every $x\in \ell^\infty$ and every $\varepsilon >0$ there is a partition $P_{x,\varepsilon}\in \Lambda$ such that
$\|T_P x -x\| < \varepsilon$
for all $P\in \Lambda$ with $P>P_{x,\varepsilon}$
(such a partition is done by dividing the interval $[\inf x, \sup x]$ in subintervals $I_i$ of length not exceeding $\varepsilon$, and taking
$x^{-1} (I_i)$ to be the elements of $P$). Thus for a finite $X\subset \ell^\infty$
there is a partition $P_{X,\varepsilon}\in \Lambda$ such that
$\|T_P x -x\| < \varepsilon$ for all $x\in X$ and for all $P\in \Lambda$ with $P>P_{X,\varepsilon}$ (just take $P_{X,\varepsilon}> P_{x,\varepsilon}$ for all $x\in X$).
\end{proof}

Now we are able to prove Theorem~\ref{th_decompNorm1acycl}.

\begin{proof}[Proof of Theorem~\ref{th_decompNorm1acycl}]
Note that under the set-theoretic assumption made in the Introduction
$\mu_T$ is concentrated over $\supp\mu_T=:\supp T$, and the
value of $T(f\, d\pi)$ for $f\, d\pi\in  D^1(E)$ is completely determined by the restriction
of $f$ and $\pi$ to $\supp T$.
In fact, if $f\, d\pi\in  D^1(E)$, then
$T(f\, d\pi)= T(f\cdot 1_{\supp T}\, d\pi)$, and if
$\pi=0$ over $\supp T$, then
\[
T(f\, d\pi)= T(f\cdot 1_{\supp T}\, d\pi)=0,
\]
so that if $f^i\, d\pi^i\in  D^1(E)$, $i=1,2$,
with $f^1\, d\pi^1\res \supp T = f^2\, d\pi^2\res \supp T$, then
$T(f^1\, d\pi^1)=T(f^2\, d\pi^2)$.

Recalling that under the same set-theoretic assumption made in the Introduction the
set $\supp T\subset E$ is separable, we may just assume $E:=\supp T$ thus reducing to the case of
a complete separable metric space $E$.
Denote by $\jmath\colon E\to \ell^\infty$ an isometric embedding of $E$
into $\ell^\infty$.
Combining Lemma~\ref{lm_decompMAP1} with Lemma~\ref{lm_ellinfty_MAP} we get
that $\jmath_{\#} T$ is decomposable in curves, i.e.\ for some
transport $\eta'$ over $\Theta(\ell^\infty)$ one has
\begin{align*}
\jmath_{\#}T(f'\,d\pi') &=\int_{\Theta(\ell^\infty)} \ld\theta'\rd (f'\,d\pi')\,
d\eta'(\theta'),\\
\MM(\jmath_{\#}T) & = \int_{\Theta(\ell^\infty)} \MM(\ld\theta'\rd)\,  d \eta'(\theta'),
\mbox{ and }\\
  \eta'(1) &= (\partial \jmath_{\#}T)^+,\quad \eta'(0) = (\partial \jmath_{\#}T)^-,
\end{align*}
for all $f\,d\pi \in D^1(\ell^\infty)$, while
$\eta'$-a.e. $\theta'\in \Theta(\ell^\infty)$ is an arc.

Note that $\jmath$ induces the isometric imbedding $\imath\colon \Theta(E)\to \Theta(\ell^\infty)$ by the formula
\[
\imath(\theta)(t):= \jmath(\theta(t))
\]
for all $\theta\in \Theta(E)$ and $t\in [0,1]$.
Let $\Sigma\subset \jmath(E)\subset \ell^\infty$ be a set such that
$\mu_{\jmath_{\#} T}(\Sigma^c)=\jmath_{\#}\mu_T(\Sigma^c)=0$.
Then
by Lemma~\ref{lm_Trestr} for $\eta'$-a.e.\ $\theta'\in \Theta(\ell^\infty)$
one has that
$\mu_{\ld\theta'\rd}$ is concentrated over
$\Sigma$, hence
$\theta'(s) \in \Sigma$ for a.e.\ $s\in [0,1]$.
Let $\theta(s):=\jmath^{-1}(\theta'(s))$
for such $s$, and extend $\theta$ to the whole $[0,1]$ by continuity, so that
$\theta\in \Theta(E)$, and in particular, $\theta' =\imath(\theta)$.
Thus one has that $\eta'$ is concentrated over $\imath(\Theta(E))$, and hence we may define $\eta:=\imath^{-1}_{\#}\eta'$.
Note also that since $\eta'$-a.e.\ $\theta'\in \Theta(\ell^\infty)$ is an arc, then so is
$\eta$-a.e. $\theta =\imath^{-1}(\theta')\in \Theta(E)$.

For $f\,d\pi \in D^1(E)$ we define $f'\,d\pi' \in D^1(\ell^\infty)$ by setting $f'(x):= f(\jmath^{-1}(x))$, $\pi'(x):= \pi(\jmath^{-1}(x))$
for $x\in \jmath(E)$ and extending these functions to the whole $\ell^\infty$. Then
\[
\jmath_{\#}T(f'\,d\pi') = T(f\, d\pi)\mbox{ and }
\ld\theta'\rd (f'\,d\pi') = \jmath_{\#} \ld\theta\rd (f'\,d\pi') =
\ld\theta\rd (f\,d\pi).
\]
Hence, minding that $\eta'=\imath_{\#}\eta$, we get
\[
T(f\,d\pi) =\int_{\Theta(E)} \ld\theta\rd (f\,d\pi)\,
d\eta(\theta).
\]
Further, since
$\MM(T)=\MM(\jmath_{\#}T)$ and $\MM (\ld\theta'\rd)=\MM(\ld\theta \rd)$, one has
\[
\MM(T)  = \int_{\Theta(E)} \MM(\ld\theta'\rd)\,  d \eta(\theta).
\]
At last,
\[
  \eta(1) = (\imath^{-1}_{\#}\eta')(1) = \jmath^{-1}_{\#}(\eta'(1))=
  \jmath^{-1}_{\#}(\partial \jmath_{\#}T)^+= (\partial T)^+,
\]
and analogously,
\[
 \eta(0) = (\partial \jmath_{\#}T)^-,
\]
which concludes the proof.
\end{proof}

\appendix

\section{An application to optimal mass transportation}\label{sec_omt1}

In this section we provide an easy application of the representation result for acyclic metric currents
to optimal mass transportation problems in metric space. It is not our goal to present such applications in full generality,
but rather to illustrate the utility of the results proven in this paper.

Given two finite positive Borel measures $\varphi^+$ and $\varphi^-$
of equal total mass
with bounded (but not necessarily compact) support
in a given metric space $(E,d)$, the
classical Monge-Kantorovich optimal mass transportation problem in a metric space $(E, d)$ is that of finding
\begin{equation}\label{eq_acyclMK1}
    \inf\{\int_{E\times E} d(x,y)\, d\gamma(x,y)\,:\, \gamma \mbox{ admissible transport plan for $\varphi^+$ and $\varphi^-$} \},
\end{equation}
where by saying that $\gamma$ is admissible, we mean that $\gamma$ is a finite positive Borel measure over
 $E\times E$ satisfying the conditions on
marginals
\[
\pi^{\pm}_{\#} \gamma=\varphi^\pm,
\]
where $\pi^{\pm}\colon (x^+, x^-)\in E\times E \mapsto  x^\pm \in E$.
Recall that we are always assuming in this paper that finite positive Borel measures are tight
(otherwise we just impose the tightness condition on $\varphi^+$ and $\varphi^-$).
The above infimum is clearly attained under such conditions.
In fact in a minimizing sequence
$\{\gamma_\nu\}$ of admissible transport plans, all plans have the same total masses (equal to the total
mass of $\varphi^+$ and $\varphi^-$) and the sequence is uniformly tight, because
\[
\gamma_\nu ((K\times K)^c)\leq \gamma(K^c\times E)+ \gamma(E\times K^c)\leq 2\varepsilon
\]
whenever $K\subset E$ is a compact set such that $\varphi^\pm(K^c)\leq \varepsilon$. Hence, by
Prokhorov theorem
for nonnegative measures
(theorem~8.6.4 from~\cite{Bogachev06}) $\gamma_\nu$ admits a narrow convergent subsequence,
and therefore the existence of a minimizer follows from lower semicontinuity with respect to such a convergence
of integrals with nonnegative lower semicontinuous integrands (in our case the integrand is even continuous).
The value of the above infimum is usually denoted by
$W_1(\varphi^+,\varphi^-)$ and is called Wasserstein distance between
$\varphi^+$ and $\varphi^-$ (or Kantorovich-Rubinstein distance, which should be surely more correct
for historical reasons). Of course, to guarantee that $W_1(\varphi^+,\varphi^-)<+\infty$, extra conditions are required
(usually one imposes conditions on the moments of $\varphi^\pm$).

The following result then holds true.

\begin{theorem}\label{th_acyclMK1}
Assume that $E$ is a geodesic metric space (i.e.\ such that for every $(x,y)\in E\times E$ there is
a curve $\theta\in \Theta$ connecting $x$ to $y$ such that $d(x,y)=\ell(\theta)$), and, moreover,
there is a Borel map $q\colon \supp\varphi^+\times\supp\varphi^-\to \Theta(E)$ such that
$d(x,y)=\ell(q(x,y))$.
Then
\begin{equation}\label{eq_acyclMK2}
    W_1(\varphi^+,\varphi^-)=
    \min\{\MM(T)\,:\, T\in \M_1(E), \partial T=\varphi^+-\varphi^- \}.
\end{equation}
Moreover, if $T$ is a minimizer of the problem~\eqref{eq_acyclMK2}, then $T$ is acyclic, and if
$\eta$ is a transport such that $T=T_\eta$ for which conditions of Theorem~\ref{th_decompNorm1acycl} hold, then
$\gamma:=(e_0\times e_1)_\#\eta$
is a minimizer of~\eqref{eq_acyclMK1}, where $e_i(\theta):=\theta(i)$, $i=0,1$ for all $\theta\in \Theta(E)$.

Viceversa,
when $\gamma$ is a minimizer of~\eqref{eq_acyclMK1}, then setting $\eta:=q_{\#}\gamma$
(so that in particular $\eta$ is concentrated on a set of geodesics), we get
that $T=T_\eta$ satisfies conditions of Theorem~\ref{th_decompNorm1acycl} and minimizes~\eqref{eq_acyclMK2}.
\end{theorem}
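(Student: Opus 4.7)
The plan is to establish the duality in~\eqref{eq_acyclMK2} by proving two inequalities, each through an explicit construction that translates a candidate for one problem into a candidate for the other. These constructions are mutually inverse, up to the bookkeeping subtlety discussed below, and simultaneously yield the asserted correspondence between optima and the attainment of the $\min$ in~\eqref{eq_acyclMK2}.

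For the inequality $\min\MM\le W_1$: given any admissible transport plan $\gamma$ between $\varphi^+$ and $\varphi^-$, I push it forward along the Borel geodesic selector to obtain the transport $\eta:=q_\#\gamma$ on $\Theta(E)$. Since the supports of $\varphi^\pm$ are bounded and $\ell(q(x,y))=d(x,y)$, one has $\int_\Theta\ell(\theta)\,d\eta=\int d(x,y)\,d\gamma<+\infty$, so Theorem~\ref{th_Teta1} produces a normal current $T_\eta$ with boundary matching $\varphi^+-\varphi^-$ (up to orientation of $q$) and mass bounded by $\int\ell(\theta)\,d\eta=\int d(x,y)\,d\gamma$. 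Taking the infimum over $\gamma$ gives $\min\MM\le W_1$, and running the construction on a $W_1$-optimal $\gamma$ (whose existence was recalled in the excerpt via the Prokhorov argument) produces a concrete current attaining the infimum in~\eqref{eq_acyclMK2}.

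For the reverse inequality $W_1\le\min\MM$: given an admissible $T$ with $\partial T=\varphi^+-\varphi^-$, Proposition~\ref{prop_acycl1} splits $T=T'+C$ with $C$ a cycle and $T'$ acyclic, so that $\partial T'=\partial T$ and $\MM(T')=\MM(T)-\MM(C)\le\MM(T)$ by the subcurrent equality. Applying Theorem~\ref{th_decompNorm1acycl} to $T'$ yields a transport $\eta$ with $T'=T_\eta$, $\eta$-a.e.\ $\theta$ an arc, $\MM(T')=\int\ell(\theta)\,d\eta$, and $\eta(1),\eta(0)$ equal to the positive and negative parts of $\partial T$. The measure $\bar\gamma:=(e_0\times e_1)_\#\eta$ (with components ordered to match the paper's convention for $\pi^\pm$) then has cost $\int d(\theta(0),\theta(1))\,d\eta\le\int\ell(\theta)\,d\eta\le\MM(T)$, with marginals equal to the mutually singular parts of $\varphi^\pm$. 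Adding the diagonal plan $(\mathrm{id}\times\mathrm{id})_\#\nu$ with $\nu:=\varphi^+\wedge\varphi^-$ then fills in precisely the common mass at zero cost, producing an admissible plan of cost $\le\MM(T)$, hence $W_1\le\MM(T)$.

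With the duality in hand, the remaining claims are quick. Acyclicity of any mass minimizer $T$ is immediate, since a nontrivial cycle $C\le T$ would yield an admissible $T-C$ of strictly smaller mass $\MM(T)-\MM(C)$. Once $T$ is acyclic, Theorem~\ref{th_decompNorm1acycl} gives $\eta$ from which the construction above produces an optimal $\gamma$; conversely, the construction $\eta:=q_\#\gamma$ from a $W_1$-optimal $\gamma$ produces a current of mass $W_1$, which is therefore a minimizer of~\eqref{eq_acyclMK2} and satisfies the conclusions of Theorem~\ref{th_decompNorm1acycl}. The main technical obstacle, really the only subtle point, is reconciling the common mass $\nu=\varphi^+\wedge\varphi^-$ with the Jordan decomposition: Theorem~\ref{th_decompNorm1acycl} recovers $\eta(i)$ equal to $(\partial T)^\pm=\varphi^\pm-\nu$ rather than $\varphi^\pm$ themselves, so optimal transport plans and optimal currents correspond only up to the cost-free diagonal plan on $\nu$. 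This is the only place where the two formulations do not line up verbatim, but since the adjustment is free of charge, it does not disturb either the equality of values or the bijection between minimizers.
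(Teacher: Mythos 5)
Your proof is correct and follows essentially the same route as the paper's: the inequality $\min\MM\le W_1$ comes from pushing an optimal plan forward along $q$ and applying Theorem~\ref{th_Teta1}, while the reverse inequality comes from stripping the cyclic part via Proposition~\ref{prop_acycl1}, decomposing the acyclic remainder in curves via Theorem~\ref{th_decompNorm1acycl}, and projecting the resulting transport to $(e_0\times e_1)_\#\eta$, exactly as in the paper. Your explicit handling of the common mass $\varphi^+\wedge\varphi^-$ by adding the zero-cost diagonal plan is a small refinement the paper's proof passes over silently when it asserts that $(e_0\times e_1)_\#\eta$ is admissible.
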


\begin{remark}\label{rem_acyclMK_q}
The conditions of the above theorem are satisfied, for instance, in the following cases.
\begin{itemize}
\item[(i)]
When
$E$ is a separable geodesic space.
In fact, a map $q$ indicated in the conditions
exists in view of the Kuratowski-Ryll-Nardzewski measurable selection theorem~5.2.1 from~\cite{Srivast98}
 because the set
 \[
 \{(\theta(0), \theta(1),\theta)\subset \supp\varphi^+\times \supp\varphi^-
        \times C([0,1];E)\,:\, d(\theta(0),\theta(1))=\ell(\theta)\}
\]
is closed (here the space $C([0,1];E)$ is assumed to be equipped with the usual uniform metric factorized
by reparameterization of curves; further, it is assumed that $\ell(\theta):=+\infty$ for $\theta \in C([0,1];E)$
not rectifiable).
\item[(ii)]
When
$E$ is a Banach space (not necessarily separable).
One may set then $q(x,y):= [x,y]$,
where the curve $[x,y]$ is defined by
\[
[x,y](t):=(1-t)x+t y, \qquad t\in [0,1].
\]
\end{itemize}
\end{remark}

\begin{remark}\label{rem_acyclMKroot}
The above result is clearly false in generic metric spaces. In particular, if one takes
$E:=[0,1]$ equipped with the distance $d(x,y):=\sqrt{|x-y|}$, and $\varphi^+:=\delta_0$,
$\varphi^-:=\delta_1$, then by Theorem~\ref{th_decompNorm1acycl} there is no current
$T\in \M_1(E)$ such that $\partial T=\varphi^+-\varphi^-$ (because $\Theta(E)$ reduces to only constant curves),
so
\[
\inf\{\MM(T)\,:\, T\in \M_1(E), \partial T=\varphi^+-\varphi^- \} =\inf \emptyset = +\infty,
\]
while $W_1(\varphi^+,\varphi^-)=1$ in this case.
One has of course the same phenomenon if $E$ is just the discrete space
$E:=\{0,1\}$ with $d(0,1)\neq 0$ and with the same choice of $\varphi^\pm$.
This shows that in fact
the minimization problem
\[
\inf\{\MM(T)\,:\, T\in \M_1(E), \partial T=\varphi^+-\varphi^- \}
\]
corresponds better to the idea of mass transportation than the classical Monge-Kantorovich setting.
\end{remark}

\begin{proof}
Assume first $S\in \M_1(E)$ be such that
$\partial S=\varphi^+-\varphi^-$ and decompose $S=T+C$ with $C\leq S$, $\partial C=0$, and $T\leq S$ acyclic
by Proposition~\ref{prop_acycl1}.
If $\eta$ is a transport such that $T=T_\eta$ for which conditions of Theorem~\ref{th_decompNorm1acycl} hold, then
setting $\gamma:=(e_0\times e_1)_\#\eta$, we have
that $\gamma$ is admissible and
\begin{equation}\label{eq_acyclMK3}
\begin{aligned}
\MM(S)\geq \MM(T)& =\int_{\Theta(E)} \ell(\theta)\, d\eta(\theta)= \int_{\Theta(E)} d(\theta(0),\theta(1))\, d\eta(\theta)\\
& =
\int_{E\times E}d(x,y)\,d \gamma(x,y)\geq W_1(\varphi^+,\varphi^-).
\end{aligned}
\end{equation}

Further,
 let
$\gamma$ be a minimizer of~\eqref{eq_acyclMK1}, $\eta:=q_{\#}\gamma$. We get
then
for $T=T_\eta$
\begin{equation}\label{eq_acyclMK4}
\begin{aligned}
W_1(\varphi^+,\varphi^-) & =\int_{E\times E}d(x,y)\,d \gamma(x,y)= \int_{E\times E}\ell(q(x,y))\,d \gamma(x,y)\\
& = \int_{\Theta(E)} \ell(\theta)\, d\eta(\theta)\geq
\MM(T_\eta),
\end{aligned}
\end{equation}
the latter inequality being due to Theorem~\ref{th_Teta1}. Combined with~\eqref{eq_acyclMK3} this gives
the optimality of $T$ for~\eqref{eq_acyclMK2}. In particular, equality holds in~\eqref{eq_acyclMK4}, so that
$\eta$ satisfies conditions of Theorem~\ref{th_decompNorm1acycl}.

Finally, it remains to observe that every minimizer $T$ of~\eqref{eq_acyclMK2} is acyclic since
deleting cycles decreases the mass without changing the boundary of a current.
\end{proof}

Theorem~\ref{th_acyclMK1}
shows the equivalence of three different descriptions of optimal mass transportation: the classical one in terms
of transport plans $\gamma$
proposed by Kantorovich which gives only the information on ``who goes where'' (i.e.\ only staring points and endpoints of transport paths), the one in terms of transports $\eta$
(which is the most precise one since it gives the full information on paths covered by infinitesimal masses during transportation), and the intermediate one in terms of currents $T$ giving the information on the total flow of mass.
Of course, the respective claims can be obtained also without using representation Theorem~\ref{th_decompNorm1acycl} for acyclic currents.
For instance the
inequality
\begin{equation}\label{eq_acyclMK5}
    W_1(\varphi^+,\varphi^-)\leq
  \MM(T)
\end{equation}
for every $T\in \M_1(E)$ satisfying $\partial T=\varphi^+-\varphi^-$ may be seen as a consequence of
Kantorovich duality
\[
  W_1(\varphi^+,\varphi^-)=
  \sup\left\{\int_E f\,d(\varphi^+-\varphi^-)\,:\, f\in \Lip_1(E)\right\}
\]
coupled with the obvious relationship
\[
\int_E f\,d(\varphi^+-\varphi^-) =\partial T(f)= T(df)\leq \MM(T)
\]
whenever $f\in \Lip_1(E)$.
Together with~\eqref{eq_acyclMK4}
which is proven without use of the representation Theorem~\ref{th_decompNorm1acycl}
(see the proof of Theorem~\ref{th_acyclMK1}) this shows
the equality~\eqref{eq_acyclMK2}.

We call a transport $\eta$ admissible, if $(e_0)_{\#}\eta=\varphi^+$, $(e_1)_{\#}\eta=\varphi^-$.
The construction used to prove~\eqref{eq_acyclMK4} shows also the
existence of an admissible transport $\eta'$ such that
\[
W_1(\varphi^+,\varphi^-)=\int_{\Theta(E)}\ell(\theta)\, d\eta'(\theta),
\]
while using~\eqref{eq_acyclMK5} for $T=T_\eta$ for an arbitrary admissible transport $\eta$ and
employing Theorem~\ref{th_Teta1}, we have
\[
W_1(\varphi^+,\varphi^-)\leq \int_{\Theta(E)}\ell(\theta)\, d\eta(\theta),
\]
so that in fact we have that $W_1(\varphi^+,\varphi^-)$ is also equal to the minimum among all admissible
transports $\eta$ of the functional $\eta\mapsto \int_{\Theta(E)}\ell(\theta)\,d\eta(\theta)$.
In this way one proves that the representation claimed in Theorem~\ref{th_decompNorm1acycl}
is true for optimal (i.e.\ mass minimizing) currents, and thus all this machinery avoiding the use of representation Theorem~\ref{th_decompNorm1acycl}
in a sense amounts to proving it ``manually'' only
for such currents, which are of course automatically acyclic.
Thus, once
proven for all acyclic currents, Theorem~\ref{th_decompNorm1acycl}
becomes an easy and natural alternative to such a machinery (observe that our proof of Theorem~\ref{th_acyclMK1}
is just few lines). Moreover, a similar result can be proven almost identically with the help of Theorem~\ref{th_decompNorm1acycl} for so-called branched optimal
transportation (see~\cite{BernCasMor08_book} for the introduction to the subject), which however goes beyond
the purely illustrative scope of this section.

\section{Metric currents}\label{sec_curr0}

Throughout the paper we are extensively using the notion of currents
with finite mass due to Ambrosio and
Kirchheim~\cite{AmbrKirch00}.

For a metric space $E$ we denote
\[
D^k(E):=\left\{ (f, \pi_i, \ldots, \pi_k)\,:\, f\in \Lip_b(E), \pi\in \Lip(E;\R^k)
\right\}.
\]
The $k$-tuples $\omega=(f, \pi_i, \ldots, \pi_k)\in D^k(E)$ will be
further called $k$-dimensional differential forms.
For the form $\omega=(f, \pi_i, \ldots, \pi_k)\in D^k(E)$ we will
adopt the notation
\[
\omega = f\,d\pi_1\wedge d\pi_2\wedge\ldots\wedge d\pi_k = f\,d\pi.
\]
The exterior derivative operator $d\colon D^k(E)\to D^{k+1}(E)$ is then
defined by
\[
d(f\,d\pi_1\wedge d\pi_2\wedge\ldots\wedge d\pi_k)  :=
1\, d f\wedge d\pi_1\wedge d \pi_2\wedge\ldots\wedge d\pi_k.
\]
Also, given an arbitrary Lipschitz map $\phi\colon F\to E$,
with $F$ metric space, one defines
the pull-back operator
$\phi^{\#}\colon D^k(E)\to D^k(F)$
by setting
\[
\phi^{\#}( f\,d\pi)
:=
f\circ\phi\,d\pi\circ\phi.
\]

\begin{definition}\label{def_AKcurrent}
A 
functional $T\colon D^k(E)\to \R$ is called real $k$-dimensional metric
current (called further for simplicity current) over $E$, if the following conditions
hold:
\begin{itemize}
  \item [(linearity)] $(f,\pi_1,\ldots,\pi_k)\mapsto T(f,\pi_1,\ldots,\pi_k)$
  is multilinear, i.e.\ linear in $f$ and in each of $\pi_i$, $i=1,\ldots, k$,
  \item [(continuity)] $T(f\,d\pi_\nu)\to T(f\,d\pi)$ whenever $\pi_\nu\to \pi$
  pointwise in $\Lip(E;\R^k)$, as $\nu\to \infty$, and have uniformly bounded Lipschitz constants,
  \item [(locality)]
  $T(f\, d\pi)=0$ whenever for some $i\in \{1,\ldots, k\}$ the function $\pi_i$ is constant in the neighborhood
  of $\{f\neq 0\}$,
  \item [(finite mass)] one has for some finite positive Borel measure $\mu$ over $E$ the estimate
\begin{equation}\label{eq_curr_finmass0}
|T(f\,d\pi)|\leq \prod_{i=1}^k \Lip(\pi_i) \int_E |f|\, d\mu
\end{equation}
valid
for every $f\in \Lip_b(E)$, $\pi\in \Lip(E,\R^k)$, with the convention
\[
\prod_{i=1}^k \Lip(\pi_i) :=1,
\]
if $k=0$.
\end{itemize}
\end{definition}

The mass measure $\mu_T$ is defined to be the minimum over all
finite Borel measures $\mu$ satisfying~\eqref{eq_curr_finmass0}, and the
total mass of a current $T$ is defined by
$\MM(T):=\mu_T(E)$.
The support $\supp T$ of a real $k$-dimensional metric current $T$ with finite mass
is defined as the support of $\mu_T$.
The set of such currents will be denoted by
$\M_k(E)$.
The mass functional $\MM$ is easily seen to define a norm in $\M_k(E)$.

We will say that a sequence of
currents $\{T_\nu\}\subset \M_k(E)$ converges
weakly to a current $T\in \M_k(E)$, and write
$T_\nu\rightharpoonup T$,  if
$T_\nu(\omega) \to T(\omega)$
as $\nu\to \infty$,
for every $\omega\in D^k(E)$. The mass is known to be
lower semicontinuous with respect to weak convergence of
currents~\cite{AmbrKirch00}.

Clearly, every metric current
$T\in \M_k(E)$ may be extended by continuity
from the space of forms $D^k(E)$ to the larger space
of $(k+1)$-tuples
$(f, \pi_i, \ldots, \pi_k)$, where
$\pi\in \Lip(E;\R^k)$, while
$f\colon E\to \R$ is a bounded Borel function on $E$.
Thus, whenever $E$ is a complete metric
space, then every
$T\in \M_0(E)$
is represented by some signed Borel measure of finite total variation over $E$
(given by the set function $B\mapsto T(1_B)$ for every Borel set
$B\subset E$, where $1_B$ stands for the characteristic function of $B$).
Hence, when necessary, we will
always identify a $T\in \M_0(E)$ with the respective signed measure.
Note that the mass $\MM$ over $\M_0(E)$ is nothing else
than the total variation norm $\|\cdot\|$ over the space of such measures
on $E$.

If $T\in \M_k(E)$ and $\omega= g\, d\tau\in D^m(E)$, $m\leq k$,
we define the
restricted metric current $T\res\omega \in \M_{k-m}(E)$ by the formula
\[
T\res\omega (f\, d\pi):= T(fg, \tau_1, \ldots, \tau_m,
\pi_1, \ldots, \pi_{k-m}) \mbox{ for all } f\, d\pi\in D^{k-m}(E).
\]
Since $T$ is assumed to have finite mass, then in the above formula
one may admit in place of $f$ and $g$ any bounded Borel functions.
In particular, whenever $\omega=1_B\in D^0(E)$ for some Borel
set $B\subset E$, we will simply write $T\res B$ for $T\res\omega$.

The boundary $\partial T$ of a $k$-dimensional current $T$ is
a $(k-1)$-dimensional current
defined by the formula
\[
\partial T(\omega):= T(d\omega) \mbox{ for all } \omega\in D^{k-1}(E).
\]
Further, for an arbitrary Lipschitz map $\phi\colon F\to E$,
with $F$ metric space, we define
the push-forward operator $\phi_{\#}\colon \M_k(F)\to \M_k(E)$ on currents
by setting
\[
(\phi_{\#}T)(\omega):= T(\phi^{\#}\omega) \mbox{ for all } \omega\in D^k(E).
\]

We say that $T$ is a
{\em normal current\/}, if $\MM(T)<+\infty$ and $\MM(\partial
T)<+\infty$.
It is worth remarking that in a finite-dimensional Euclidean space
$E=\R^n$ every normal current (in the sense of metric currents)
by theorem~11.1 from~\cite{AmbrKirch00}
may be identified via a natural isomorphism
with a Whitney normal current.

If $E$ is a normed
space, we call oriented segment $\ld a,b\rd$ the curve $\theta$ (or, to be more precise, the
equivalence class of curves in $\Theta(E)$) that may be parameterized by
$\theta(t):=(1-t)a+tb$, $t\in [0,1]$.
We identify oriented segments with one-dimensional currents associated with them.
We further call $T\in \M_1(E)$ \emph{polyhedral} current, if
$T = \sum_\nu \theta_\nu T_\nu$, where
$\theta_\nu>0$, and
$T_\nu$ are currents associated to oriented segments
$T_\nu=\ld a_\nu,b_\nu\rd$ which may overlap only at the endpoints.

The following easy statement regarding weak convergence of metric currents has to be mentioned.

\begin{lemma}\label{lm_AKtight1}
Let $T_\nu\in \M_1(E)$, $T_\nu\rightharpoonup T$ in the weak sense of currents
and $\MM(T_\nu)\to \MM(T)$ as $\nu\to \infty$.
Then $\mu_{T_\nu}\rightharpoonup \mu_T$ in the narrow sense of measures
and in particular, the sequence of measures $\{\mu_{T_\nu}\}$ is uniformly tight.
\end{lemma}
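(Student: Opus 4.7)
The plan is to show convergence $\int_E\varphi\,d\mu_{T_\nu}\to\int_E\varphi\,d\mu_T$ first for every nonnegative $\varphi\in\Lip_b(E)$, then extend it to all $\varphi\in C_b(E)$, obtaining narrow convergence $\mu_{T_\nu}\weakto\mu_T$. The uniform tightness claim will then follow automatically from Prokhorov's theorem in the form available under the set-theoretic assumption of the Introduction.

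The key observation is that for every $\varphi\in\Lip_b(E)$ the restricted currents $T_\nu\res\varphi$ converge weakly to $T\res\varphi$ as currents: indeed,
\[
(T_\nu\res\varphi)(f\,d\pi)=T_\nu(f\varphi\,d\pi)\to T(f\varphi\,d\pi)=(T\res\varphi)(f\,d\pi)
\]
for every $f\,d\pi\in D^1(E)$, because $f\varphi\in\Lip_b(E)$ whenever $f,\varphi\in\Lip_b(E)$. Recalling from~\cite{AmbrKirch00} the identity $\mu_{T\res\varphi}=|\varphi|\,\mu_T$, one has $\MM(T\res\varphi)=\int_E\varphi\,d\mu_T$ whenever $\varphi\geq 0$, so that the lower semicontinuity of the mass along weakly convergent sequences of currents gives
\[
\int_E\varphi\,d\mu_T=\MM(T\res\varphi)\leq\liminf_\nu\MM(T_\nu\res\varphi)=\liminf_\nu\int_E\varphi\,d\mu_{T_\nu}.
\]

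I would next apply the very same inequality to the ``complement'' $C-\varphi$, where $C:=\|\varphi\|_\infty$, obtaining $\int_E(C-\varphi)\,d\mu_T\leq\liminf_\nu\int_E(C-\varphi)\,d\mu_{T_\nu}$. Combined with the hypothesis $\MM(T_\nu)\to\MM(T)$, the latter forces $\limsup_\nu\int_E\varphi\,d\mu_{T_\nu}\leq\int_E\varphi\,d\mu_T$, so that both liminf inequalities are in fact equalities. This produces the genuine limit $\int_E\varphi\,d\mu_{T_\nu}\to\int_E\varphi\,d\mu_T$ for every nonnegative $\varphi\in\Lip_b(E)$, and linearity promotes the identity to all of $\Lip_b(E)$.

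The final step is to pass from bounded Lipschitz to bounded continuous test functions, using the standard fact that $\Lip_b(E)$ is convergence-determining for the narrow topology on finite Borel measures in metric spaces. Narrow convergence of $\{\mu_{T_\nu}\}$ then implies uniform tightness by Prokhorov's theorem (theorem~8.6.4 of~\cite{Bogachev06}), which one is entitled to use under the set-theoretic assumption of the Introduction. The main analytic input I rely on is the mass-restriction identity $\mu_{T\res\varphi}=\varphi\mu_T$ for nonnegative bounded Lipschitz $\varphi$, quoted from~\cite{AmbrKirch00}; once that is in hand, the remainder of the argument is a variational Portmanteau-type manipulation of inequalities, so I do not expect any genuinely hard step.
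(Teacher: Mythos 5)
Your proof is correct and follows essentially the same route as the paper: the paper's two-line argument combines the localized lower semicontinuity of mass, in the form $\mu_T(U)\le\liminf_\nu\mu_{T_\nu}(U)$ for open $U\subset E$, with the convergence of total masses and a Portmanteau theorem (theorem~8.2.3 of~\cite{Bogachev06}), then invokes Prokhorov exactly as you do. Your only deviation is to derive the functional version of that liminf inequality from the restriction identity $\mu_{T\res\varphi}=|\varphi|\mu_T$ and the global lower semicontinuity of $\MM$, and to carry out the Portmanteau step by hand via the complement trick $C-\varphi$; both steps are sound.
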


\begin{remark}\label{rem_gentight3} 
The
conclusion on uniform tightness of $\{\mu_{T_\nu}\}$
is true by theorem~8.6.4 from~\cite{Bogachev06}
if the measures $\mu_{T_\nu}$ and $\mu_T$ are tight
(which is automatically satisfied once one makes the set-theoretical assumption mentioned in the Introduction).
\end{remark}

\begin{proof}
One has $\mu_{T_\nu}(E)\to \mu(E)$ and
\[
\mu_T(U)\le \liminf_\nu \mu_{T_\nu}(U)
\]
for every open $U\subset E$, and therefore $\mu_{T_\nu}\rightharpoonup \mu_T$ in the narrow sense of measures
by theorem~8.2.3 from~\cite{Bogachev06}. The uniform tightness of $\{\mu_{T_\nu}\}$ follows then from Prokhorov theorem
for nonnegative measures
(theorem~8.6.4 from~\cite{Bogachev06}).
\end{proof}

%

\section{Polyhedral approximation in finite dimensions}

This section contains an auxiliary assertion on approximation of currents over a finite-dimensional
normed space $E$.
In the case when $E=\R^n$ is Euclidean, analogous results
can be found, e.g., in~\cite{Feder}[4.1.23,4.2.24] (our result is a bit more precise for one-dimensional currents and tailored for our purposes so as to be used in combination with Lemma~\ref{lm_measconv1} and Proposition~\ref{prop_Teta2}).
Throughout this section $\dim E=n$, and $E$ is assumed to be endowed with
some norm $\|\cdot\|$, while the notation $\R^n$ will stand for the same space endowed with the Euclidean norm $|\cdot|$.
We denote by $\F(T)$ the \emph{flat norm} of $T$ defined by
\[
\F(T):=\inf\{\MM(A)+\MM(B)\,:\, A\in \M_k(E),\, B\in \M_{k+1}(E),\, A+\partial B = T\}.
\]

\begin{lemma}\label{lm_Tapprox0}
Let $T\in \M_1(E)$ be a normal current with compact support
over the finite-dimensional space $E$, and $r>0$ be such that
$\supp T\subset B_r(0)\subset E$.
Then there is a sequence of one-dimensional real polyhedral currents $T_\nu$ over $E$
with $\supp T_\nu\subset B_{2r}(0)$,
which converge to $T$ in the flat norm (in particular, weakly), i.e.\ $\F(T_\nu-T)\to 0$, while
$(\partial T_\nu)^\pm \weakto (\partial T)^\pm$
in the $*$-weak sense of measures (in particular,
$\MM(\partial T_\nu)\to \MM(\partial T)$) and $\MM(T_\nu)\to \MM(T)$ as
$\nu\to \infty$.
If $T$ is acyclic, one may choose $T_\nu$ to be acyclic too.
\end{lemma}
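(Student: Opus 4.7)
The plan is to combine two classical operations --- mollification and polyhedral discretization --- and then, in the acyclic case, strip off any residual cycles. Fix an auxiliary Euclidean structure on $E$; since all norms on a finite-dimensional space are equivalent, flat and narrow convergence do not depend on this choice, but the mass functional does, so some care is needed to ensure mass converges in the intrinsic norm $\|\cdot\|$ rather than in some Euclidean proxy.

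First I would regularize $T$ by setting $T_\varepsilon := T \ast \rho_\varepsilon$ for a smooth radial mollifier $\rho_\varepsilon$ supported in the Euclidean ball of radius $\varepsilon < r$. Then $T_\varepsilon$ is represented by a smooth compactly supported vector field, $\supp T_\varepsilon \subset B_{r+\varepsilon}(0) \subset B_{2r}(0)$, and the identities $\partial T_\varepsilon = (\partial T) \ast \rho_\varepsilon$ together with the usual semicontinuity arguments yield $\MM(T_\varepsilon) \to \MM(T)$, $\MM(\partial T_\varepsilon) \to \MM(\partial T)$, and $\mu_{\partial T_\varepsilon} \weakto \mu_{\partial T}$ narrowly. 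The flat convergence $\F(T_\varepsilon - T) \to 0$ follows from an explicit affine homotopy current of mass $O(\varepsilon)(\MM(T) + \MM(\partial T))$.

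Next I would discretize each smooth $T_\varepsilon$ on a simplicial complex of mesh $\delta$ covering $B_{2r}(0)$: on each simplex replace the smooth vector field by its average, and realize the resulting piecewise-constant current as a polyhedral one by writing it, on each cell, as a weighted sum of oriented segments aligned with the direction of the averaged vector field, the weights being chosen so that the flux across each face matches. Subdividing, if necessary, so that segments from neighboring simplices meet only at endpoints gives a polyhedral $T_{\varepsilon,\delta}$ with $T_{\varepsilon,\delta} \flatto T_\varepsilon$, $\MM(T_{\varepsilon,\delta}) \to \MM(T_\varepsilon)$ and $\MM(\partial T_{\varepsilon,\delta}) \to \MM(\partial T_\varepsilon)$ as $\delta \to 0$, and a diagonal extraction $(\varepsilon_\nu, \delta_\nu) \to (0,0)$ produces the desired polyhedral sequence $T_\nu$, supported in $B_{2r}(0)$. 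The convergence $(\partial T_\nu)^\pm \weakto (\partial T)^\pm$ then follows from $\partial T_\nu \weakto \partial T$ together with $\MM(\partial T_\nu) \to \MM(\partial T)$, via the standard fact that narrow convergence of signed measures plus total-variation convergence implies narrow convergence of the Hahn--Jordan parts.

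For the acyclic case I would apply to each polyhedral $T_\nu$ the elementary loop-extraction procedure in the spirit of Lemma~\ref{lm_measconv1}: every polyhedral 1-current splits as $T_\nu = T_\nu' + C_\nu$ with both summands polyhedral, $C_\nu$ a finite weighted union of closed paths (hence a cycle of $T_\nu$) and $T_\nu'$ acyclic. Since $C_\nu \leq T_\nu$, $T_\nu \weakto T$ and $\MM(T_\nu) \to \MM(T)$, Lemma~\ref{lm:compconv} forces every weak subsequential limit of $C_\nu$ to be a cycle of $T$, hence zero by hypothesis, and the same lemma then yields $\MM(C_\nu) \to 0$; replacing $T_\nu$ by $T_\nu'$ preserves all the required convergences. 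The main obstacle is the direction-aware polyhedralization: a naive axis-aligned grid approximation would inflate the intrinsic mass (a diagonal direction would be overcounted by axis-parallel steps), so the representing segments on each simplex must be aligned with the actual direction of $v_\varepsilon$, which is precisely what makes the construction more delicate than the standard flat approximation of Federer.
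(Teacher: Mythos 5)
Your overall architecture (mollify, then polyhedralize with direction-aligned segments, then strip cycles) matches the paper's Steps 1 and 3, and your observation that the segments must be aligned with the vector field to get the \emph{intrinsic} mass right is exactly the point of the paper's rectangle construction. The cycle-removal argument at the end is also essentially the paper's Step 3 (the paper uses Proposition~\ref{prop_acycl1} plus Lemma~\ref{lm_polysub} to keep polyhedrality, whereas you extract loops combinatorially, which is fine and arguably more elementary). However, there is a genuine gap in the middle: you assert that the cell-by-cell discretization satisfies $\MM(\partial T_{\varepsilon,\delta})\to\MM(\partial T_\varepsilon)$, and this is not justified --- indeed it fails for any naive local construction. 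The boundary of a polyhedral current built simplex by simplex is a cloud of Dirac masses at the interior endpoints of the segments; for these to cancel in \emph{mass} (not merely in flat norm) the atoms contributed by the two sides of each internal face would have to coincide exactly in location and weight, which does not happen when adjacent cells carry different field directions. Generically $\MM(\partial T_{\varepsilon,\delta})$ is of the order of the total flux through all internal faces, i.e.\ $O(\MM(T_\varepsilon)/\delta)$, which blows up as $\delta\to 0$. Since your deduction of $(\partial T_\nu)^\pm\weakto(\partial T)^\pm$ rests precisely on total-variation convergence of the boundaries (via the Hahn--Jordan fact / corollary~8.4.8 of \cite{Bogachev06}), this step of the conclusion is not reached.

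This is exactly the difficulty that the paper's Step~2 is designed to handle, and it cannot be skipped: after producing the polyhedral $S_\nu$ with $\F(S_\nu-T)\to 0$ and $\MM(S_\nu)\to\MM(T)$ but with uncontrolled boundary mass, one chooses purely atomic measures $\phi_\nu^\pm$ with the \emph{same total masses} as $(\partial T)^\pm$ and $\F(\phi_\nu^\pm-(\partial T)^\pm)\to 0$, and then invokes \cite{Feder}[4.2.23] to find polyhedral correctors $Y_\nu$ with $\MM(Y_\nu)\to 0$ and $\MM(\phi_\nu^+-\phi_\nu^--\partial S_\nu-\partial Y_\nu)\to 0$; setting $T_\nu:=S_\nu+Y_\nu$ forces $\MM(\partial T_\nu)\to\MM(\partial T)$, after which the convergence of the positive and negative parts follows as you intended. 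To repair your argument you must either add such a boundary-correction step or give a genuinely global construction in which the representing segments are concatenated across cells so that all interior endpoint atoms cancel identically --- the latter is essentially a decomposition-in-curves statement and is not available at this stage of the paper.
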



\begin{proof}

{\em Step 1}.
We first show adapting the proof of~\cite{Feder}[4.1.23] that $T$ may be approximated in flat norm by a sequence
of polyhedral currents $S_\nu\in \M_1(E)$
supported over $B_r(0)$
with $\MM(S_\nu)\to \MM(T)$
as
$\nu\to \infty$.
For this purpose, first, by choosing the approximate identity
\[
\varphi_\eps(x):=\frac{1}{\eps^n} \varphi(\frac{x}{\eps}),
\]
where
$\varphi\in C_0^\infty(\R^n)$, $\varphi\geq 0$, $\|\varphi\|_1=1$,
$\eps>0$, define $T_\eps\in \M_1(E)$ by setting
\[
T_\eps(\omega):=\int_{\R^n} (\tau_{x\#} T)(\omega)\varphi_\eps(-x)\, dx
\]
for all $\omega\in D^1(E)$. Once one considers $T$ and $T_\eps$ as currents over $\R^n$ (with Euclidean norm), by~\cite{Feder}[4.1.18] one gets
$\F_2(T_\eps-T)\leq \eps (\MM_2(T)+\MM_2(\partial T))$,
where $\F_2$ and $\MM_2$ stand for the flat norm and mass over Euclidean flat chains.
Hence $\F(T_\eps-T)\to 0$ as $\eps\to 0^+$. On the other hand,
\[
\MM(T_\eps)\leq \int_{\R^n} \MM(\tau_{x\#} T)\varphi_\eps(-x)\, dx=
\int_{\R^n} \MM(T)\varphi_\eps(-x)\, dx=\MM(T),
\]
which combined with lower semicontinuity of mass gives $\MM(T_\eps)\to \MM(T)$ as $\eps \to 0^+$.
Analogously, one has
\[
\partial T_\eps(\omega):=\int_{\R^n} (\tau_{x\#} \partial T)(\omega)\varphi_\eps(-x)\, dx
\]
for every $\omega\in D^{0}(E)$, hence
$\MM(\partial T_\eps)\to \MM(\partial T)$ as $\eps \to 0^+$.
Also, clearly, $\supp T_\eps 
\subset B_r(0)$ once $\eps>0$ is
sufficiently small.
But
\[
T_\eps(f\,d\pi)=\int_{\R^n} f(x) (\nabla\pi(x),l)\, dx
\]
for some integrable vector field $l=l_\eps\colon \R^n\to \R^n$ (cfr.~proposition~6.1 combined with theorem~1.3 in~\cite{Williams10}), and therefore this reduces the proof of the desired assertion to the case of $T$ having exactly such form.

We may thus assume now $T(f\,d\pi):=\int_{\R^n} f(x) (\nabla\pi(x),l)\, dx$
for some integrable vector field $l\colon \R^n\to \R^n$.
Since $\|\nabla\pi(x)\|'\leq \Lip\, \pi $ for all $x\in E$,
where $\|\cdot\|'$ stands for the norm in the space $E'$ dual to $E$, one clearly has $\mu_T\leq \|l\|\,dx$.
Moreover, in fact the equality $\mu_T= \|l\|\,dx$ holds. It is clearly enough to prove this for the case
$l$ is a simple (i.e.\ finite valued) function, that is, $l=\sum_{i=1}^m l_i 1_{E_i}$ for some constants $l_i\in E$ and
Borel sets $E_i\subset E$. In this case just take $l_i'\in E'$ be such that $(l_i', l_i)=\|l_i\|$, $\|l_i'\|'=1$ and $\pi_i\colon E\to \R$
be affine functions such that $\nabla \pi_i=l_i'$, hence $\Lip\,\pi_i\leq 1$. Then
\[
\mu_T(e) =\MM(T\res e)\geq \sum_{i=1}^m T(1_{E_i\cap e}\,d\pi_i)=\sum_{i=1}^m \mathcal{L}^n(E_i\cap e)\|l_i\|=
\int_e\|l\|\, dx,
\]
where $\mathcal{L}^n$ stands for the $n$-dimensional Lebesgue measure in $E$.

Approximating $l$ by piecewise constant vector fields $l_k$
which are constant over a finite number of rectangles $R^k_i\subset E$, with one side of the rectangle parallel to the direction of $l_k$ inside $R^k_i$, the approximation being intended in the sense $\int_{R^n}|l-l_k|\,dx\to 0$ as $k\to +\infty$ (so that the currents $T_k$ defined by
$T_k(f\,d\pi):=\int_{\R^n} f(x) (\nabla\pi(x),l_k)\, dx$, converge to $T$ in mass), we reduce the problem to the case
\[
T(f\,d\pi):=\int_{R} f(x) (\nabla\pi(x),l)\, dx,
\]
where $R\subset \R^n$ is a rectangle and $l(x)$ is constant and parallel to one of the sides of $R$. Let $[a,b]$ be a side of $R$ parallel to $l$
and directed in the same direction as $l$ (i.e.\
with the vector $b-a$ having the direction of $l$), and
 consider the $(n-1)$-dimensional face $S$ of $R$
perpendicular to $l$ such that $a\in S$. Dividing $S$ by a uniform dyadic grid
with nodes $\{x_i\}_{i=1}^{N_\nu}$, with $N_\nu=2^{\nu}-1$, and setting $\theta_i(t):=
x_i+tl$ for $t\in [0,1]$, we let
\[
S_\nu := \frac{\mathcal{L}^n(R)\cdot \|l\|}{N_\nu\|b-a\|} \sum_{i=1}^{N_\nu} \ld\theta_i\rd,
\]
so that, minding
$\MM(\ld\theta_i\rd)=\ell(\theta_i)= \|b-a\|$, we have
 $\MM(S_\nu)= \mathcal{L}^n(R)\cdot \|l\| =\MM(T)$.
Clearly, one has $\F(S_\nu-T)\to 0$ as $\nu\to\infty$ (e.g.\ one may refer
to the fact that $\MM_2(T)=\mathcal{L}^n(R)\cdot |l|=\MM_2(S_\nu)$, and
$\F_2(S_\nu-T)\to 0$ as $\nu\to \infty$).

{\em Step 2}. Let $S_\nu$ be a sequence constructed in Step~1 of the proof.
We as usual identify finite purely atomic measures with
zero-dimensional polyhedral currents.
Recall that $(\partial T)^+$ has the same total mass as
$(\partial T)^-$ since
\[
\partial((\partial T)^+-(\partial T)^-)=\partial \partial T=0.
\]
Let $\phi_\nu^\pm$ be purely atomic measures
with compact support over $B_r(0)$,
having the same total mass as
$(\partial T)^\pm$
(so that in particular, $\MM(\phi_\nu^+-\phi_\nu^-)=\MM(\partial T)$)
and such that
\[
\F(\phi_\nu^\pm-(\partial T)^\pm)\to 0
\]
as $\nu\to \infty$
(recall that in fact, $\F_2$, and hence also $\F$, metrizes $*$-weak topology over the set of finite Borel measures concentrated over a compact subset of $E$).
We now act as in the proof of~\cite{Feder}[4.2.24].
Mind that
$\F(\partial T-\partial S_\nu)\leq \F(T-S_\nu)$ and hence
\[
\F(\phi_\nu^+-\phi_\nu^- -\partial S_\nu) \leq
\F(\phi_\nu^+-\phi_\nu^- -\partial T) +
\F(T-S_\nu) \to 0
\]
as $\nu\to \infty$.
Using~\cite{Feder}[4.2.23] choose now one-dimensional polyhedral currents
$Y_\nu$ with $\supp Y_\nu\subset B_r(0)$ such that
\[
\MM(\phi_\nu^+-\phi_\nu^- -\partial S_\nu-\partial Y_\nu) +\MM(Y_\nu)\to 0,
\]
and set
$T_\nu:=S_\nu+Y_\nu$, so that
$\supp T_\nu \subset B_{2r}(0)$.
One  has then
\begin{align*}
|\MM(T_\nu)-\MM(T)|&\leq |\MM(S_\nu)-\MM(T)| + \MM(Y_\nu)\to 0,\\
|\MM(\partial T_\nu)-\MM(\partial T)| &= |\MM(\partial S_\nu+\partial Y_\nu)-\MM(\partial T)|\\
&\leq \MM(\phi_\nu^+-\phi_\nu^- -\partial S_\nu-\partial Y_\nu)+ |\MM(\phi_\nu^+-\phi_\nu^-)-\MM(\partial T)|\\
&= \MM(\phi_\nu^+-\phi_\nu^- -\partial S_\nu-\partial Y_\nu) \to 0,
\end{align*}
while
\[
\F(T_\nu-T)\leq F(T_\nu-T)+\F(Y_\nu)\leq F(T_\nu-T)+\MM(Y_\nu)\to 0
\]
as $\nu\to \infty$.
Finally, viewing $\partial T_\nu$ and $\partial T$ as signed measures,
we have that the total variations of the former are uniformly bounded
and converge to that of the latter, and therefore
$(\partial T_\nu)^\pm \weakto (\partial T)^\pm$
in the $*$-weak sense of measures  as
$\nu\to \infty$
by corollary~8.4.8 of~\cite{Bogachev06}.

{\em Step 3}. If $T$ is acyclic, we modify $T_\nu$ in the following way.
Let $C_\nu$ be the cycle of $T_\nu$ given by Proposition~\ref{prop_acycl1}
such that $T_\nu':=T_\nu - C_\nu$ is acyclic.
Up to a subsequence (not relabeled), $C_\nu\weakto C$ as $\nu\to \infty$.
Hence, by Lemma~\ref{lm:compconv}, $\MM(C_\nu)\to \MM(C)$ as $\nu\to \infty$
and $C$ is a
cycle of $T$. Since the only cycle of $T$ is zero we conclude that
 $\MM(C_\nu)\to 0$, which means that $T_\nu'\weakto T$ and $\MM(T_\nu')\to
 \MM(T)$ as $\nu\to \infty$.

It remains to observe that
$T_\nu'\leq T_\nu$, and since
$T_\nu =\sum_{i=1}^{m_\nu} \alpha_{i,\nu}\ld\Delta_{i,\nu}\rd$, where
$\alpha_{i,\nu}\in \R$ and $\Delta_{i,\nu}\subset E$ are segments which may overlap only at the endpoints, then
\begin{equation}\label{eq_TNDel1}
T_\nu'\res \Delta_{i,\nu} \leq T_\nu\res \Delta_{i,\nu}
\end{equation}
 by Remark~\ref{rem_subcurr4}
for all $i=1,\ldots, m_\nu$. Further, one has
\begin{equation}\label{eq_TNDel2}
\partial (T_\nu'\res \Delta_{i,\nu}) \leq \partial (T_\nu\res \Delta_{i,\nu})
\end{equation}
for all $i=1,\ldots, m_\nu$,
since otherwise by Lemma~\ref{lm_polysub} one would have
that $\partial (T_\nu'\res \Delta_{i,\nu})$ charges the interior of a segment
$\Delta_{i,\nu}$  for some $i=1,\ldots, m_\nu$, which would contradict
$\partial T_\nu'=\partial T_\nu$  (the latter measure being concentrated only at the endpoints of the segments
 $\Delta_{i,\nu}$).
Therefore, from~\eqref{eq_TNDel1} and~\eqref{eq_TNDel2} with the help of Lemma~\ref{lm_polysub} we get
\[
T_\nu'\res \Delta_{i,\nu}=\beta_{i,\nu}\ld\Delta_{i,\nu}\rd
\]
for some $\beta_{i,\nu}\in \R$, and hence $T_\nu'$ is still polyhedral.
\end{proof}

\begin{lemma}\label{lm_polysub}
Let $E$ be an arbitrary metric space.
If $T=\sigma_{\#}\ld a,b\rd$, where $\sigma$ is injective,
$\{a,b\}\in \R$ and
$S\le T$, then $S=T\res \lambda$ for some Borel function $\lambda\colon E\to [0,1]$. Further, either
$\partial S\le \partial T$, which happens if and only if
$\lambda\in [0,1]$ is constant over $\supp T$, or $\partial S$ considered as a measure
charges $\sigma([a,b])\setminus\{\sigma(a), \sigma(b)\}$.
\end{lemma}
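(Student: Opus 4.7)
The plan is to first extract a scalar density $\lambda$ by Radon--Nikodym, then upgrade equality of mass measures to equality of currents using the injectivity of $\sigma$, and finally compute $\partial S$ via a BV integration by parts along the arc so as to read off the dichotomy.

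\emph{First part.} Since $S\le T$, Remark~\ref{rem_subcurr4} gives $\mu_S\le\mu_T$. Because $\mu_T$ is concentrated on the compact arc $\sigma([a,b])$, so is $\mu_S$, and Radon--Nikodym produces a Borel function $\lambda\colon E\to[0,1]$, null off $\sigma([a,b])$, with $\mu_S=\lambda\mu_T$. Set $S':=T\res\lambda$. Then $\mu_{S'}=\lambda\mu_T=\mu_S$ and
\[
  \MM(S')+\MM(T-S')=\int_E\lambda\,d\mu_T+\int_E(1-\lambda)\,d\mu_T=\MM(T),
\]
so $S'\le T$ as well. The critical step is to upgrade $\mu_S=\mu_{S'}$ to $S=S'$. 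Here I would exploit the injectivity of $\sigma$: the equality $\MM(T)=\MM(S)+\MM(T-S)$ in the mass triangle inequality is a rigidity statement forcing $S$ and $T-S$ to share the same orientation as $T$ at $\mu_T$-a.e.\ point, and since $\sigma$ is injective the arc carries a single canonical tangent direction at $\mu_T$-a.e.\ point, so the orientation of $S$ must coincide with that of $T$ up to a nonnegative scalar factor $\lambda$. One concrete way to implement this is to use the identity $T(f\,d\pi)=\int_a^b f(\sigma(t))(\pi\circ\sigma)'(t)\,dt$ on Lusin sets where $\sigma$ is bi-Lipschitz onto its image (so that $\sigma^{-1}$ admits a Lipschitz extension to $E$ by McShane), exhausting $[a,b]$ by such sets and passing to the limit by axiomatic continuity of currents; an alternative is to invoke the Ambrosio--Kirchheim representation of 1-currents supported on a 1-rectifiable set, which directly yields $T(f\,d\pi)=\int f\,\langle d\pi,\vec\tau\rangle\,d\mu_T$ together with an analogous representation of $S$ with the same $\vec\tau$.

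\emph{Second part.} With $S=T\res\lambda$, set $u:=\lambda\circ\sigma\colon[a,b]\to[0,1]$. For $f\in\Lip_b(E)$ one has
\[
  \partial S(f)=S(1\,df)=T(\lambda\,df)=\int_a^b u(t)\,(f\circ\sigma)'(t)\,dt.
\]
When $\partial S$ has finite mass, $u$ is BV on $[a,b]$, and choosing the representative with $u(a)=u(a^+)$, $u(b)=u(b^-)$ the distributional integration by parts gives, as signed measures on $E$,
\[
  \partial S=u(b)\,\delta_{\sigma(b)}-u(a)\,\delta_{\sigma(a)}-\sigma_\#\bigl(Du\res(a,b)\bigr),
\]
whereas $\partial T=\delta_{\sigma(b)}-\delta_{\sigma(a)}$ is purely atomic at the two endpoints. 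If $u$ is not constant on $(a,b)$, then $Du\res(a,b)\neq 0$, and since $\sigma|_{(a,b)}$ is injective, $\sigma_\#(Du\res(a,b))$ is a nonzero signed measure supported on $\sigma((a,b))=\sigma([a,b])\setminus\{\sigma(a),\sigma(b)\}$, so $\partial S$ charges the interior of the arc. If $u\equiv c$ on $[a,b]$, then $\partial S=c\,\partial T\le\partial T$. Conversely, assuming $\partial S\le\partial T$, the defining identity $\MM(\partial S)+\MM(\partial T-\partial S)=\MM(\partial T)=2$ reduces, using $u\in[0,1]$, to $2+2|Du\res(a,b)|([a,b])=2$, forcing $Du\res(a,b)=0$; hence $u$, and thus $\lambda$, is constant on $\supp T$ up to $\mu_T$-null sets.

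\emph{Main obstacle.} The delicate point is the rigidity $\mu_S=\mu_{T\res\lambda}\Longrightarrow S=T\res\lambda$: in a general metric space there are no literal tangent vector fields available to compare orientations, so one has to either invoke an Ambrosio--Kirchheim-type structure theorem for 1-currents supported on a 1-rectifiable set, or carry out a Lusin-type reduction to the subsets of $[a,b]$ on which $\sigma$ is bi-Lipschitz and then use the elementary classification of subcurrents of $\ld a,b\rd$ on the real interval (which are trivially of the form $\ld a,b\rd\res\tilde\lambda$). Everything else is a routine application of Radon--Nikodym and the standard BV integration by parts.
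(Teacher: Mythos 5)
Your plan is essentially the paper's proof, and the parts you actually carry out are correct. The one step you leave open --- upgrading $\mu_S=\mu_{T\res\lambda}$ to $S=T\res\lambda$ --- is settled in the paper by precisely the second strategy in your ``main obstacle'' paragraph, but implemented globally rather than on Lusin sets: reparameterize $\sigma$ by arclength so that it becomes an isometry onto its trace, push forward by the Lipschitz map $\sigma^{-1}$ to get $\tilde S:=\sigma^{-1}_{\#}S\le\tilde T:=\ld a,b\rd$ with $\mu_{\tilde S}=(\lambda\circ\sigma)\,\mathcal L^1\res[a,b]$, and then use the elementary classification of one-dimensional currents in $\R$ (each has a signed $L^1$ density, and the subcurrent relation forces the density of $\tilde S$ to be a $[0,1]$-multiple of that of $\tilde T$) to conclude $\tilde S=\tilde T\res(\lambda\circ\sigma)$, hence $S=T\res\lambda$. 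Your heavier alternative via an Ambrosio--Kirchheim representation on rectifiable sets is not needed; on the other hand your Lusin-type variant is arguably more robust, since for a general injective rectifiable curve the arclength parameterization need not have Lipschitz inverse with respect to the ambient metric (the paper's isometry claim is really tailored to the straight segments to which the lemma is applied). For the boundary dichotomy, your BV integration by parts and the mass count $\MM(\partial S)+\MM(\partial T-\partial S)=2+2|Du|((a,b))$ make explicit what the paper disposes of in one line on $\tilde S=\ld a,b\rd\res\alpha$; just note that you tacitly assume $\MM(\partial S)<\infty$ so that $u\in BV$, which is harmless because both alternatives of the statement treat $\partial S$ as a measure.
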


\begin{proof}
Assume without loss of generality that $\sigma$ is parameterized by arclength (in particular, then $a=0$), so that $\sigma$ is an isometry
between $[a, b]$ and $\sigma([a,b])$.
Denote
\begin{align*}
\tilde{S} :=\sigma^{-1}_{\#} S,\qquad\qquad
\tilde{T}:=\ld a, b \rd ,
\end{align*}
so that in particular
\[
\tilde{T}-\tilde{S}=\sigma^{-1}_{\#} (T-S).
\]
Since $S\leq T$, then
by Remark~\ref{rem_subcurr4} one has
$\mu_S\leq \mu_T$ and hence $\mu_S=\lambda\mu_T$ for some
Borel function $\lambda$ satisfying $0\leq \lambda\leq 1$.
Minding now that $\sigma$ is an isometry,
we get
\begin{align*}
\mu_{\tilde{S}} &= \sigma^{-1}_{\#} \mu_{S}=(\lambda\circ \sigma)\\
\mu_{\tilde{T}-\tilde{S}}  & =  \sigma^{-1}_{\#} \mu_{T-S}=(1-\lambda\circ \sigma)
\mu_{\tilde{T}},
\end{align*}
where $\mu_{\tilde{T}}={\mathcal L}^1\res [a, b]$.
This means $\mu_{\tilde{S}}+\mu_{\tilde{T}-\tilde{S}}=\mu_{\tilde{T}}$, or, in other words,
$\tilde{S}\leq \tilde{T}$. Now, since $\tilde{S}$ and $\tilde{T}$ are one-dimensional currents
in $\R$, then $\tilde{S}=\tilde{T}\res \alpha$ for some
Borel function $\alpha$ satisfying $0\leq \alpha\leq 1$. Therefore,
$\alpha=\lambda\circ \sigma$, which implies
$S= T\res \lambda$.
Analogously one gets $\partial \tilde{S} \leq \partial \tilde{T}= \delta_a-\delta_b$, which is only possible if $\alpha$ is constant over $[a, b]$ (minding that $\tilde{S}=\ld a, b\rd\res \alpha$).
Hence, also $\lambda$ is constant, and this completes the proof.
\end{proof}

\bibliographystyle{plain}

\end{document}